\title{Homology character of the parabolic coset poset}
\author[1]{Theo Douvropoulos}
\author[2]{Matthieu Josuat-Vergès}
\affil[1]{Brandeis University}
\affil[2]{IRIF, CNRS, Université Paris-Cité}
\date{}
\newtheorem{theo}{Theorem}[section]
\newtheorem{lemm}[theo]{Lemma}
\newtheorem{prop}[theo]{Proposition}
\theoremstyle{definition}
\newtheorem{defi}[theo]{Definition}
\newtheorem{rema}[theo]{Remark}
\newtheorem{nota}[theo]{Notation}
\DeclareMathOperator{\Inv}{Inv}
\DeclareMathOperator{\des}{Des}
\DeclareMathOperator{\asc}{Asc}
\DeclareMathOperator{\Span}{Span}
\DeclareMathOperator{\Ind}{Ind}
\DeclareMathOperator{\Fix}{Fix}
\DeclareMathOperator{\rk}{rk}
\DeclareMathOperator{\Fr}{Fr}
\newcommand{\bchi}{\boldsymbol{\chi}}
\newcommand{\bphi}{\boldsymbol{\varphi}}
\newcommand{\bxi}{\boldsymbol{\xi}}
\newcommand{\bepsilon}{\boldsymbol{\epsilon}}
\newcommand{\sign}{\mathbf{\epsilon}}
\newcommand{\triv}{\mathbf{1}}
\newcommand{\PP}{\mathcal{P}}
\newcommand{\LL}{\mathcal{L}}
\newcommand{\CC}{\mathcal{C}}
\newcommand{\DD}{\mathcal{D}}
\newcommand{\CCP}{\mathcal{C}^{>0}}
\begin{document}

\maketitle

\begin{abstract}
    Motivated by the analogy with the Coxeter complex on one side, and parking functions on the other side, we study the poset of parabolic cosets in a finite Coxeter group.  We show that this poset is Cohen-Macaulay, and get an explicit formula for the character of its (unique) nonzero homology group in terms of the Möbius function of the intersection lattice.  This homology character becomes a positive element of the parabolic Burnside ring (in its natural basis) after tensoring with the sign character.  The coefficients of this character essentially encode the colored $h$-vector of the positive chamber complex (following Bastidas, Hohlweg, and Saliola, this complex is defined by taking Weyl chambers that lie on the positive side of a generic hyperplane).  Roughly speaking, tensoring by the sign character on one side corresponds to the transformation going from the $f$-vector to the $h$-vector on the other side.
\end{abstract}

\section{Introduction}

Let $G$ be a group and $\mathcal{H}$ be a collection of subgroups of $G$.  The {\it  coset poset} associated to $G$ and $\mathcal{H}$ is defined as the set of cosets $gH$ (with $g\in G, \; H\in \mathcal{H}$), ordered by inclusion.  It was introduced by Abels and Holz~\cite{abelsholz}, who relate its topological properties with algebraic properties of $\mathcal{H}$.  Brown~\cite{brown} considers the coset poset in the case of a finite group $G$ (and $\mathcal{H}$ the collection of all proper subgroups), in connection with the probabilistic zeta function of $G$: the evaluation at $-1$ of this zeta function is the inverse of  the Möbius invariant of the coset poset.

In this work, we consider the case of a finite Coxeter group $W$ of rank $n$ (and its geometric realization as a reflection group in a real vector space $V \simeq \mathbb{R}^n$). 
\begin{itemize}
    \item By considering the collection of standard parabolic subgroups, the associated coset poset is the {\it Coxeter complex}.  It also identifies with the face poset of the $W$-permutohedron.  This link with the permutohedron shows that the proper part of the poset has the topology of a $(n-1)$-dimensional sphere.  Moreover, the character of $W$ acting on its $(n-1)$st homology group is the sign character (since each reflection reverses the orientation of the permutohedron).  
    \item By considering noncrossing parabolic subgroups, the associated coset poset is the {\it poset of parking functions}.  This poset has been introduced by Edelman~\cite{edelman} in type A, further studied in~\cite{delcroixogerjosuatvergesrandazzo,douvropoulosjosuatverges2}.  In the latter, we proved that it is Cohen-Macaulay and gave the character of $W$ acting on its $(n-1)$st homology group as a parking character tensored with the sign character. 
\end{itemize}
Motivated by these two examples, we go one step further and investigate the coset poset $\PP$ associated to the collection of all parabolic subgroups.   We show that $\PP$ is Cohen-Macaulay (Theorem~\ref{theo1}), by following the same method as with the parking functions poset~\cite{douvropoulosjosuatverges2}.  Consequently, the interesting homology group to study is $\tilde H_{n-1}(\bar \PP)$, where $\bar\PP := \PP - \{W\}$ is the proper part of $\PP$ (see Section~\ref{sec:defpp} for details). Denote by $\bxi$ the character of $W$ acting on this homology group.  It is an element of the {\it parabolic Burnside ring}, which has a natural basis $(\bphi_X)_{X \in \Theta}$ that will be used throughout (see Orlik and Solomon~\cite{orliksolomon}, and the review in~\cite[Section~3.1]{douvropoulosjosuatverges1}). The combinatorial nature of $\bxi$ is unraveled as follows: 
\begin{itemize}
    \item In Theorem~\ref{theo2}, we give a simple explicit formula for $\bxi$ in terms of the Möbius function of the intersection lattice, and the basis $(\bphi_X)_{X \in \Theta}$.  It is a straightforward application of the Whitney homology technique devised by Sundaram~\cite{sundaram}.
    \item Let $\bepsilon$ denote the sign character of $W$.  In Proposition~\ref{theo3}, we show that $\bxi\otimes\bepsilon$ is positive in the $(\bphi_X)_{X \in \Theta}$ basis, and show that the coefficients counts certain facets in the {\it positive chamber complex} $\CCP$.
\end{itemize}

The complex $\CCP$ is defined by taking all Weyl chambers that entirely lie in the positive side $H^+$ of some generic hyperplane $H$, following Bastidas, Hohlweg, and Saliola~\cite{bastidashohlwegsaliola}. Some basic properties of this construction will be given in Section~\ref{sec:poschambercomp}.  It follows from~\cite{greenezaslavsky} that the number of chambers in $\CCP$ is $\prod_{i=1}^n e_i $ where $e_i$ are the exponents of $W$ (assuming it is irreducible).  By building on this, we will see that $\bxi$ is essentially an ``almost-colored'' $f$-vector of $\CCP$, while $\bxi\otimes\bepsilon$ is the corresponding ``almost-colored'' $h$-vector.  The precise meaning of this will be given in Proposition~\ref{theo3}.  In the case where $H^\perp$ meets the fundamental chamber $F$, this $h$-vector has a nice interpretation in terms of descents (see Theorem~\ref{theo4}), similar to the one obtained by Stembridge~\cite{stembridge} in his study of Coxeter cones.

The case of the symmetric group is detailed in Section~\ref{sec:sym}.  Parabolic cosets correspond to combinatorial objects known as {\it uniform block permutations}, see Sequence~\href{https://oeis.org/A023998}{A023998} in the OEIS~\cite{oeis} and references therein. To our knowledge, the poset structure on uniform block permutations hasn't been considered before and most of the interest on these objects come from a monoid structure, see~\cite{fitzgerald}.  We give generating functions for the characters $\bxi$ and for their dimensions, in terms of a Bessel function.

\section{The parabolic coset poset}
\label{sec:defpp}

We keep $W$, $n$, and $V\simeq\mathbb{R}^n$ as in the introduction.  Let $\LL$ be the {\it intersection lattice} of $W$.  We recall that its minimal element is $V$, its rank 1 elements are the {\it reflecting hyperplanes} (such a hyperplane is the fixed point subspace $\Fix(t)$ of a reflection $t\in W$), and any other element is an intersection of reflecting hyperplanes.  Its maximal element is the $0$-dimensional subspace $\{0\}$. It is a geometric lattice. Moreover, it is isomorphic to the lattice of parabolic subgroups of $W$ (where the order is inclusion) via the map
\begin{align} \label{eq:isoLL}
    \LL \owns X \mapsto W_X := \{ w \in W \;:\; X\subset \Fix(w) \}.
\end{align}

\begin{defi}
    The {\it parabolic coset poset} is:
    \[
        \PP
        :=
        \biguplus_{X\in \LL} W/W_X
        =
        \big\{ w W_X \; : \; w\in W,\; X \in \LL \big\}.
    \]
The partial order is inclusion.  The action of $W$ is left multiplication. 
\end{defi}
% no reason to take reverse inclusion here !

Clearly, $\PP$ has a unique maximal element, namely $W$ itself.  The minimal elements of $\PP$ are the singletons $\{w\}$ for $w$ in $W$ (cosets modulo the trivial subgroup). 

\begin{prop} \label{prop:interval}
    For each $w\in W$, the interval $[\{w\},W]$ in $\PP$ is isomorphic to $\LL$ via the map
    \[
        w W_X \mapsto X.
    \]
\end{prop}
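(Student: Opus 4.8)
The plan is to show that the assignment $\iota\colon X \mapsto wW_X$ is an order isomorphism $\LL \xrightarrow{\;\sim\;} [\{w\},W]$ whose inverse is precisely the map $wW_X \mapsto X$ appearing in the statement. Since it is already recorded in \eqref{eq:isoLL} that $X \mapsto W_X$ is a lattice isomorphism from $\LL$ onto the poset of parabolic subgroups of $W$ ordered by inclusion, essentially all of the work will reduce to that fact together with elementary coset manipulations.

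First I would identify the interval as a set. An arbitrary element of $\PP$ is a coset $vW_Y$ with $v\in W$ and $Y\in\LL$, and it lies in $[\{w\},W]$ exactly when $w\in vW_Y$; as two cosets of a fixed subgroup are either equal or disjoint, this forces $vW_Y=wW_Y$. Hence $[\{w\},W]=\{\,wW_Y : Y\in\LL\,\}$, so $\iota$ is onto. For well-definedness and injectivity of $wW_X\mapsto X$, note that from the underlying subset $S=wW_X\subseteq W$ one recovers the subgroup $S^{-1}S = W_X$ independently of the chosen representative $w$; combined with \eqref{eq:isoLL}, the subspace $X$ is therefore determined by $S$ alone. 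Thus $wW_X\mapsto X$ is a well-defined two-sided inverse of $\iota$, and $\iota$ is a bijection.

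It then remains to check that $\iota$ and its inverse preserve order. For $X,X'\in\LL$, left translation by $w^{-1}$ gives $wW_X\subseteq wW_{X'}\iff W_X\subseteq W_{X'}$, and by \eqref{eq:isoLL} the right-hand side is equivalent to $X\le X'$ in $\LL$; this settles both implications at once. I do not expect a genuine obstacle here: the only point requiring care is keeping track of the order convention on $\LL$ — where the order is reverse inclusion of subspaces, so that the minimal element $V$ corresponds to the trivial group and the maximal element $\{0\}$ to $W$ itself — but this is already built into \eqref{eq:isoLL} as quoted above, so nothing further needs to be proved.
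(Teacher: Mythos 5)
Your proof is correct and takes essentially the same route as the paper's: reduce to the case $w=e$ via left translation, identify the resulting interval with the poset of parabolic subgroups, and invoke the isomorphism in \eqref{eq:isoLL}. You simply make explicit a few routine checks (that the interval is exactly $\{wW_Y : Y\in\LL\}$, well-definedness via $S^{-1}S=W_X$) that the paper leaves implicit.
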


\begin{proof}
    Via the action of $w^{-1}$, the interval $[\{w\},W]$ is isomorphic to $[\{e\},W]$ where $e$ is the neutral element of $W$.  This interval contains precisely the parabolic subgroups of $W$.  It is isomorphic to $\LL$ via the inverse of the map in~\eqref{eq:isoLL}.
\end{proof}

\begin{prop} \label{prop:ordfilter}
    For each $wW_X \in \PP$, the order ideal
    \[
        \{ 
            w' W_{X'} \in \PP \;:\; w'W_{X'} \subset w W_X
        \}
    \]
    is isomorphic to the parabolic coset poset of $W_X$.
\end{prop}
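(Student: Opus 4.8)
The plan is to use the $W$-action to reduce to the case $w=e$, and then to recognise the order ideal below $W_X$ as the parabolic coset poset of $W_X$ essentially on the nose, the only real content being the standard identification of the parabolic subgroups and intersection lattice of $W_X$ inside those of $W$. First I would note that left multiplication by $w^{-1}$ is a poset automorphism of $\PP$ carrying $wW_X$ to $W_X$, hence it maps the order ideal below $wW_X$ isomorphically onto the order ideal below $W_X$ (compatibly with the relevant group actions). So it suffices to show that the order ideal below $W_X$ is isomorphic to the parabolic coset poset of $W_X$.

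Next I would describe the cosets $w'W_{X'}\in\PP$ with $w'W_{X'}\subseteq W_X$. Since $e\in W_{X'}$, containment forces $w'=w'\cdot e\in W_X$; conversely, if $w'\in W_X$ then $w'^{-1}W_X=W_X$, so $w'W_{X'}\subseteq W_X$ is equivalent to $W_{X'}\subseteq W_X$. Using $\Fix(W_{X'})=X'$ together with the definition $W_X=\{w:X\subseteq\Fix(w)\}$, one checks that $W_{X'}\subseteq W_X$ if and only if $X\subseteq X'$ as subspaces of $V$ (one inclusion is immediate; for the other, intersect $\Fix(w)$ over $w\in W_{X'}$). Hence the order ideal below $W_X$ equals $\{\,w'W_{X'}:w'\in W_X,\ X'\in\LL,\ X\subseteq X'\,\}$, ordered by inclusion of subsets of $W$, with $W_X$ acting by left multiplication.

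Finally I would match this with the parabolic coset poset of $W_X$. The group $W_X$ is itself a finite reflection group, and the needed structural input is that its reflections are exactly the reflections $t$ of $W$ with $X\subseteq\Fix(t)$, that intersecting their reflecting hyperplanes yields precisely the flats $X'\in\LL$ with $X\subseteq X'$, and that every parabolic subgroup of $W_X$ is generated by the reflections it contains; consequently the intersection lattice of $W_X$ is identified with $\{X'\in\LL:X\subseteq X'\}$ via $X'\mapsto W_X\cap W_{X'}=W_{X'}$. Granting this, the parabolic coset poset of $W_X$ is $\{\,uW_{X'}:u\in W_X,\ X'\in\LL,\ X\subseteq X'\,\}$ ordered by inclusion, which is literally the poset produced in the previous step; so the identity map is the desired $W_X$-equivariant isomorphism, and composing with the translation by $w$ finishes the proof.

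The main (and quite mild) obstacle is the last step: one must invoke the standard theory of reflection arrangements and parabolic subgroups (as recalled in Section~\ref{sec:defpp}, following Orlik–Solomon) to be sure that the parabolic subgroups of $W_X$ are exactly the parabolic subgroups of $W$ lying inside $W_X$, and that the intersection lattice of $W_X$ is the corresponding interval of $\LL$ (so that $(W_X)_{X'}=W_{X'}$ for all relevant $X'$). Everything else is routine bookkeeping with cosets and inclusions.
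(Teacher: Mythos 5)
Your proof is correct and follows essentially the same route as the paper: translate by $w^{-1}$ to reduce to $w=e$, observe that a coset contained in $W_X$ must be a parabolic coset of $W_X$, and conclude via the standard identification of the parabolic subgroups (and intersection lattice) of $W_X$ with those of $W$ contained in $W_X$. You simply spell out the structural facts that the paper leaves implicit in ``the result easily follows.''
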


\begin{proof}
    Via the action of $W$, we can assume $w=e$. First note that $w'W_{X'} \subset W_X$ implies $w' \in W_X$ and $W_{X'} \subset W_X$.  So $w'W_{X'}$ is a parabolic coset of $W_X$.  The result easily follows (the isomorphism mentioned in the proposition is just the identity).
\end{proof}

Let $\bar\PP := \PP - \{W\}$.  It is called the {\it proper part} of $\PP$, and is the interesting poset from the topological point of view: indeed, the topology associated to $\PP$  is trivial since it has a maximum (see~\cite[Lecture~1]{wachs}).  We also introduce $\hat \PP := \PP \cup \{ \varnothing\}$.  This is again a poset ordered by inclusion, obtained from $\PP$ by adding the minimal element $\varnothing$.  Its topological relevance stems from Philip Hall's Theorem: the reduced Euler characteristic of the order complex of $\bar \PP$ equals $\mu(\hat\PP)$, the total Möbius invariant of $\hat\PP$ (see~\cite[Lecture~1]{wachs}).

\begin{prop}
    The poset $\hat\PP$ is a lattice.    
\end{prop}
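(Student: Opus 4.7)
The plan is to observe that $\hat\PP$ has a maximum element (namely $W$), and then to show that $\hat\PP$ is a meet-semilattice; it follows automatically from a standard fact that $\hat\PP$ is a lattice, since in any finite poset the join of two elements can be realized as the meet of their set of upper bounds, provided this set is nonempty, and here it always contains $W$.

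To construct meets, I would first dispense with the easy case: the meet of $\varnothing$ with anything is $\varnothing$. For two nonempty cosets $wW_X$ and $w'W_{X'}$, I would consider their set-theoretic intersection as subsets of $W$. If this intersection is empty, then no element of $\PP$ (which consists of nonempty cosets) lies below both, so the meet in $\hat\PP$ is $\varnothing$. Otherwise, picking any $g$ in the intersection yields $wW_X = gW_X$ and $w'W_{X'} = gW_{X'}$, so
\[
    wW_X \cap w'W_{X'} \;=\; g(W_X \cap W_{X'}).
\]
Invoking the isomorphism~\eqref{eq:isoLL} between $\LL$ and the lattice of parabolic subgroups of $W$, the intersection $W_X \cap W_{X'}$ of two parabolic subgroups is itself parabolic, equal to $W_{X \wedge X'}$ where $X \wedge X'$ denotes the meet in $\LL$. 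Therefore $wW_X \cap w'W_{X'} = g W_{X \wedge X'} \in \PP$.

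To conclude that this is indeed the greatest lower bound in $\hat\PP$, note that the order is inclusion: any coset contained in both $wW_X$ and $w'W_{X'}$ is contained in their set-theoretic intersection, which is itself an element of $\hat\PP$. There is essentially no obstacle here: the only nontrivial input is that $\LL$ is a lattice, which is part of the assumed setup, and the rest is a direct book-keeping of cosets. The slight subtlety is simply remembering to add $\varnothing$ precisely so that the meet-semilattice structure extends to pairs of disjoint cosets.
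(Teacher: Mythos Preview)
Your proof is correct and follows essentially the same approach as the paper's: show that the set-theoretic intersection of two parabolic cosets is either empty or again a parabolic coset, so that $\hat\PP$ is a meet-semilattice with maximum $W$, hence a lattice. The paper's proof is in fact terser than yours (it simply asserts that the intersection of two parabolic cosets is empty or a parabolic coset), whereas you spell out the coset translation to a common representative $g$ and identify $W_X\cap W_{X'}$ with $W_{X\wedge X'}$.

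One small remark: invoking the poset isomorphism~\eqref{eq:isoLL} alone does not quite show that the \emph{set-theoretic} intersection $W_X\cap W_{X'}$ is parabolic---a poset isomorphism only transports the lattice-theoretic meet. The missing line is that $w\in W_X\cap W_{X'}$ iff $X+X'\subset\Fix(w)$, and since $\Fix(w)\in\LL$ this is equivalent to $X\wedge X'\subset\Fix(w)$, i.e.\ $w\in W_{X\wedge X'}$. This is entirely standard, and the paper does not supply it either, so there is no genuine gap.
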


\begin{proof}
    The intersection of two parabolic cosets is either empty, or another parabolic coset.  This shows that meets exist in $\PP$.  The existence of joins follows.
\end{proof}

\section{Topology of the parabolic coset poset}

We refer to Wachs~\cite{wachs} and Kozlov~\cite{kozlov} for poset topology, in particular for the notion of shellability and EL-shellability.  We refrain from writing a detailed introduction to this subject, because our topological result is a direct application of known methods and closely follows our previous work~\cite{douvropoulosjosuatverges2} (in the case of parking functions).

As a geometric lattice, $\LL$ is a shellable poset.  We review the construction of an EL-labeling.  Let $\LL_1$ be the set of rank 1 elements in $\LL$, {\it i.e.}, the reflecting hyperplanes of $W$ in $V$.  Let $\prec$ be an arbitrary total order on $\LL_1$.  To each cover relation $X \lessdot Y$ in $\LL$, we associate the label
\begin{align} \label{eq:labelling}
    \lambda(X,Y) := \min_{\prec} \big\{ H \in \LL_1 \;:\; X \cap H = Y \big\}.
\end{align}
Each saturated chain $X_1\lessdot \dots \lessdot X_k$ thus gives a {\it label sequence} $(\lambda(X_1,X_2), \dots ,\lambda( X_{k-1}, X_k) )$.  This turns out to be an EL-labeling, which means that for each relation $X\leq Y$: i) there exist a unique saturated chain $X \lessdot \dots \lessdot Y$ having a $\prec$-increasing label sequence, ii) this increasing label sequence is minimal for the lexicographic order, among saturated chains for $X$ to $Y$.  The existence of the EL-labeling imply that $\LL$ is shellable.  More precisely, a shelling is obtained by sorting maximal chains of $\LL$ via the lexicographic order of associated label sequences.   We refer to Wachs~\cite[Section~3.2.3]{wachs} for details and background about this.

\begin{lemm} \label{lemm:shell}
    Let $A \subset \LL_1$ with $A \neq \varnothing$. The subposet 
    \[
        \LL^{(A)} := \{V\} \cup \{ X \in \LL \; : \;  \exists H \in A, \; H\leq X  \} 
    \]
    of $\LL$ is shellable. 
\end{lemm}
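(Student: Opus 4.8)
The plan is to produce an EL-labeling of $\LL^{(A)}$ by restricting the labeling $\lambda$ of $\LL$ from~\eqref{eq:labelling}, after choosing the total order $\prec$ on $\LL_1$ so that the hyperplanes belonging to $A$ precede all the others. First I would record the elementary structure of $\LL^{(A)}$: it is a bounded poset with $\hat 0 = V$ and $\hat 1 = \{0\}$ (the zero subspace lies below every hyperplane, hence in $\LL^{(A)}$, as soon as $A\ne\varnothing$); its atoms are exactly the elements of $A$; and for any $X\in\LL^{(A)}$ with $X\ne V$ the upper interval $[X,\hat 1]$ is the same whether computed in $\LL^{(A)}$ or in $\LL$, since every flat above $X$ automatically lies above some $H\in A$. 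Consequently $\LL^{(A)}$ is graded with all maximal chains of length $n$, every cover relation of $\LL^{(A)}$ is a cover relation of $\LL$ — the only ``new'' ones are $V\lessdot H$ with $H\in A$, which are covers in $\LL$ too because $H$ has rank $1$ — and since $V\cap K=K$ for every $K\in\LL_1$, formula~\eqref{eq:labelling} gives $\lambda(V,H)=H$. So $\lambda$ does restrict to a labeling of the cover relations of $\LL^{(A)}$.

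Next I would verify the EL-property one interval type at a time. An interval $[X,Y]$ of $\LL^{(A)}$ with $X\ne V$ is literally an interval of $\LL$, so $\lambda$ is already an EL-labeling there; an interval $[V,H]$ with $H$ an atom is a single cover. This leaves the intervals $[V,Y]$ with $Y$ not an atom. A maximal chain of such an interval has the form $V\lessdot H\lessdot\cdots\lessdot Y$ with $H\in A$ and $H\le Y$, with label sequence $H$ followed by the label sequence of a maximal chain of $[H,Y]_{\LL}$. The key computation is that the first label of the unique increasing maximal chain of $[H,Y]_{\LL}$ is $\min_{\prec}\{K\in\LL_1:K\le Y,\ K\ne H\}$: this first label is $\min_{\prec}\{\lambda(H,Z):H\lessdot Z\le Y\}$; a rank count in the geometric lattice $\LL$ shows $\{Z:H\lessdot Z\le Y\}=\{H\cap K:K\in\LL_1,\ K\le Y,\ K\ne H\}$; and since $\lambda(H,H\cap K)=\min_{\prec}\{K'\in\LL_1:H\cap K'=H\cap K\}$, the minimum over $Z$ equals the minimum over all such $K'$, which is precisely $\min_{\prec}\{K'\in\LL_1:K'\le Y,\ K'\ne H\}$.

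From this it follows that the chain $V\lessdot H\lessdot\cdots\lessdot Y$ is $\prec$-increasing exactly when $H\prec\min_{\prec}\{K\in\LL_1:K\le Y,\ K\ne H\}$, i.e. exactly when $H=\min_{\prec}\{K\in\LL_1:K\le Y\}$. Because $A$ comes first in $\prec$ and $Y$ lies above at least one element of $A$, this minimum lies in $A$; hence $[V,Y]$ has exactly one increasing maximal chain. That same chain is also the lexicographically smallest: its first label $\min_{\prec}\{K\in\LL_1:K\le Y\}$ is the least possible first label of any maximal chain of $[V,Y]$, and its tail is the lex-smallest maximal chain of the corresponding interval of $\LL$. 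Therefore $\lambda$ restricts to an EL-labeling of $\LL^{(A)}$, and in particular $\LL^{(A)}$ is shellable, as required.

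I expect the only genuine obstacle to be the key computation above — essentially the collapse of the nested minima — together with the bookkeeping forced by the fact that flats of $\LL$ are ordered by reverse inclusion (so ``$K\le Y$'' means the hyperplane $K$ contains the flat $Y$); everything else is inherited from $\LL$ and its intervals being geometric lattices. One should also double-check the rank count $\{Z:H\lessdot Z\le Y\}=\{H\cap K:K\le Y,\ K\ne H\}$, which amounts to the fact that a codimension-$2$ flat contained in $H$ is cut out by $H$ together with any other hyperplane through it.
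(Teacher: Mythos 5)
Your proposal is correct and follows the first of the two routes the paper itself offers: check that the restriction of the EL-labeling $\lambda$ of $\LL$ (with $\prec$ chosen so that $A$ is a prefix) is still an EL-labeling of $\LL^{(A)}$. The paper simply calls this "straightforward to check"; you have supplied the check, and the key computation — that the first label of the increasing chain of $[H,Y]_{\LL}$ is $\min_{\prec}\{K\in\LL_1 : K\le Y,\ K\ne H\}$, so that the unique increasing (and lex-smallest) chain of $[V,Y]$ passes through $H_0:=\min_{\prec}\{K\le Y\}$, which lies in $A$ because $A$ is a prefix and $Y$ dominates some element of $A$ — is the right one and is carried out correctly.
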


\begin{proof}
    Let $\prec$ be a total order on $\LL_1$ such that $A$ is a prefix.  Let $\lambda$ be the associated EL-labelling of $\LL$ defined in \eqref{eq:labelling}.  Note that for each $H \in \LL_1$, we have $V\lessdot H$ and $\lambda(V,H) = H$.  

    It is straightforward to check that the restriction of $\lambda$ is again a EL-labelling of $\LL^{(A)}$.  Alternatively, the maximal chains of $\LL^{(A)}$ form a prefix in the lexicographic order on maximal chains of $\LL$, and it follows that this prefix is a shelling order for $\LL^{(A)}$.
\end{proof}

Let $P$ be a ranked poset and let $r$ denote its rank ({\it i.e.}, every maximal chain in $P$ has cardinality $r+1$).  We call $P$ {\it spherical} if its order complex is homotopy equivalent to a wedge of $r$-dimensional spheres.  A classical tool to prove this property is shellability.  In particular, if $P$ is bounded ({\it  i.e.}, it has a minimal element $\hat 0$ and a maximal element $\hat 1$) and has an EL-labeling, then $\bar P := P-\{\hat 0,\hat 1\}$ is spherical.  For example, $\bar \LL$ is spherical.  The EL-labeling of $\LL$ naturally extends to an EL-labeling of $\PP$, but it is not clear how to get an EL-labeling of $\hat \PP = \PP\cup \{\varnothing\}$.  So, we use an alternative inductive method and the main tool is the following lemma.

\begin{lemm} \label{lemm:inducposet}
    Let $P$ be a ranked poset of rank $r$.  Suppose that $P = P_1 \cup P_2$ where
    \begin{itemize}
        \item $P_1$ and $P_2$ are spherical posets of rank $r$,
        \item $P_1\cap P_2$ is a spherical poset of rank $r-1$,
        \item every chain in $P$ is included in $P_1$ or $P_2$ (this condition means that the order complex of $P$ is geometrically the union of those of $P_1$ and $P_2$).
    \end{itemize}
    Then $P$ is spherical as well.
\end{lemm}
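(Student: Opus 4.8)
The plan is to use the standard Mayer–Vietoris / nerve-style argument for order complexes, which reduces the statement to checking that the relevant reduced homology groups vanish outside degree $r$. Write $\Delta = \Delta(P)$, $\Delta_i = \Delta(P_i)$, $\Delta_{12} = \Delta(P_1 \cap P_2)$ for the order complexes. The third hypothesis says exactly that $\Delta = \Delta_1 \cup \Delta_2$ as simplicial complexes, and moreover that $\Delta_1 \cap \Delta_2 = \Delta_{12}$: indeed a simplex of $\Delta_1 \cap \Delta_2$ is a chain contained in both $P_1$ and $P_2$, hence in $P_1 \cap P_2$, and conversely. This is the one point where the hypothesis "every chain in $P$ is included in $P_1$ or $P_2$" is genuinely used, together with the (automatic) fact that $P_1 \cap P_2$ is a subposet of each $P_i$.

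First I would invoke the reduced Mayer–Vietoris long exact sequence for the decomposition $\Delta = \Delta_1 \cup \Delta_2$ with intersection $\Delta_{12}$:
\[
    \cdots \to \tilde H_k(\Delta_{12}) \to \tilde H_k(\Delta_1) \oplus \tilde H_k(\Delta_2) \to \tilde H_k(\Delta) \to \tilde H_{k-1}(\Delta_{12}) \to \cdots
\]
Since $P_1$ and $P_2$ are spherical of rank $r$, their order complexes are homotopy equivalent to wedges of $r$-spheres, so $\tilde H_k(\Delta_i) = 0$ for $k \neq r$; since $P_1 \cap P_2$ is spherical of rank $r-1$, $\tilde H_k(\Delta_{12}) = 0$ for $k \neq r-1$. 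Feeding this into the sequence, for $k \notin \{r-1, r\}$ both neighbors of $\tilde H_k(\Delta)$ vanish, giving $\tilde H_k(\Delta) = 0$. It remains to handle $k = r-1$: here the relevant piece is
\[
    0 = \tilde H_{r-1}(\Delta_1)\oplus\tilde H_{r-1}(\Delta_2) \to \tilde H_{r-1}(\Delta) \to \tilde H_{r-2}(\Delta_{12}) = 0,
\]
so $\tilde H_{r-1}(\Delta) = 0$ as well (the left term vanishes because $r-1 \neq r$, the right because $r-2 \neq r-1$). Hence $\tilde H_k(\Delta) = 0$ for all $k \neq r$.

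To conclude that $P$ is spherical I would note that $\Delta$ has dimension $r$ (every maximal chain of $P$ has $r+1$ elements, since $P$ is ranked of rank $r$ and is covered by $P_1 \cup P_2$ with maximal chains of that length), so $\Delta$ is an $r$-dimensional complex with reduced homology concentrated in degree $r$. If one is content with a homotopy-theoretic conclusion, the cleanest route is: $\Delta$ is the union of two complexes along a subcomplex, each of which is (up to homotopy) a wedge of $r$-spheres and the intersection a wedge of $(r-1)$-spheres; gluing a wedge of $(r-1)$-spheres into a wedge of $r$-spheres via any map collapses it, so a homotopy-pushout argument (or, more elementarily, the fact that an $r$-dimensional complex whose reduced homology is free and concentrated in top degree — which holds here over $\mathbb Z$ since all the $\tilde H_*$ in sight are free, being those of wedges of spheres — is homotopy equivalent to a wedge of $r$-spheres) gives the claim. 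I expect the main (minor) obstacle to be precisely this last step: upgrading "homology concentrated in degree $r$" to "homotopy equivalent to a wedge of $r$-spheres." The honest way is to observe $\Delta$ is simply connected when $r \geq 2$ (van Kampen, since $\Delta_1, \Delta_2$ are and $\Delta_{12}$ is connected for $r \geq 2$) and then apply the Hurewicz theorem plus the fact that a simply connected CW-complex with free homology concentrated in one degree is a wedge of spheres; the cases $r \leq 1$ are checked directly. Since this paper only uses Cohen–Macaulayness / the homology computation downstream, it would also be legitimate to state the lemma for the homological (Cohen–Macaulay over $\mathbb Z$) notion of sphericality, in which case the Mayer–Vietoris computation above is the entire proof.
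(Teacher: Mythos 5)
Your proposal is correct and follows the same Mayer--Vietoris approach as the paper (which itself defers to Lemma~5.1 of the cited prior work): you verify $\Delta(P_1)\cap\Delta(P_2)=\Delta(P_1\cap P_2)$ using the chain hypothesis, run the reduced Mayer--Vietoris sequence to kill $\tilde H_k$ for $k\neq r$, note that $\tilde H_r$ sits in a short exact sequence between free abelian groups so is itself free, and then upgrade to the homotopy statement. Your remarks on the final upgrade (simple connectivity via van Kampen, then Hurewicz, with low-rank cases by hand) make explicit what the paper treats as standard, so the two proofs match in substance.
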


\begin{proof}
    This is the same as~\cite[Lemma~5.1]{douvropoulosjosuatverges2}.  The idea is to use the Mayer-Vietoris long exact sequence to compute the homology of $P$ in terms of those of $P_1$, $P_2$, and $P_1\cap P_2$.  The third condition above is the assumption that permits to use this technique.   We obtain that this homology is a free abelian group in degree $r$ (via the first two conditions above, and the fact that an extension of two free abelian groups is a free abelian group), and $0$ in other degrees.  This suffices to characterize a wedge of $r$-dimensional spheres.  See~\cite{douvropoulosjosuatverges2} for details. 
\end{proof}

Let us introduce some notations needed for the proof of the next proposition.  The Coxeter length  of $w\in W$ is denoted by $\ell(x)$.  The (right) inversion set of $w\in W$ is:
\[
    \Inv(w) 
        :=
    \big\{
        \Fix(t) \;:\; t\in T \text{ such that } \ell(wt) < \ell(w)
    \big\}.
\]
It is convenient to use reflecting hyperplanes rather than reflections here, in order to apply Lemma~\ref{lemm:shell} with $A = \Inv(w)$.  Also, recall that the right weak order on $W$ is defined by $w_1\leq w_2$ iff $\Inv(w_1) \subset \Inv(w_2)$.  

\begin{prop} \label{prop:hatPspherical}
    The poset $\bar \PP$ is spherical (of rank $n-1$).
\end{prop}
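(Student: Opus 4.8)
The plan is to argue by induction on the rank $n$ of $W$, using Lemma~\ref{lemm:inducposet} to glue together two pieces of $\bar\PP$. The base case $n=1$ is immediate: $\bar\PP$ consists of the two singletons $\{e\}$ and $\{t\}$ (where $t$ is the unique reflection), which form a $0$-sphere. For the inductive step, fix a reflecting hyperplane $H_0\in\LL_1$, let $t_0$ be the corresponding reflection, and decompose $W$ along the weak order. Concretely, one sets
\[
    P_1 := \{ wW_X \in \bar\PP \;:\; H_0\notin\Inv(w)\},
    \qquad
    P_2 := \{ wW_X \in \bar\PP \;:\; H_0\in\Inv(w)\},
\]
so that $P_1$ (resp.\ $P_2$) collects the cosets whose representative of minimal length lies on the positive (resp.\ negative) side of $H_0$; equivalently $P_1 = \bar\PP\setminus (t_0 P_2)$ up to the obvious relabeling. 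Since left multiplication by $t_0$ is a poset automorphism of $\PP$ exchanging $P_1$ and $P_2$, it suffices to understand one of them together with the intersection $P_1\cap P_2$.

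The key structural observations are the following. First, every chain of $\bar\PP$ lies entirely in $P_1$ or in $P_2$: a chain $w_1W_{X_1}\subset\dots\subset w_kW_{X_k}$ can be taken (after acting by a suitable $w$, or rather by examining minimal-length coset representatives) so that its members are comparable in weak order, and then $H_0$ belongs either to all the inversion sets or to none of the relevant ones — this is exactly the kind of convexity statement that makes $A=\Inv(w)$ the natural input for Lemma~\ref{lemm:shell}. Second, $P_1$ should be identified, via the map sending $wW_X$ to $W_X$ on each fiber, with a poset built from $\LL^{(A)}$-type truncations: restricting to $w$ with $H_0\notin\Inv(w)$ and pushing down through Proposition~\ref{prop:interval}, the interval structure becomes a copy of $\LL$, and the shellability of the relevant truncations $\LL^{(A)}$ from Lemma~\ref{lemm:shell} gives that $P_1$ is spherical of rank $n-1$. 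Third, the intersection $P_1\cap P_2$ consists of the cosets $wW_X$ that are fixed by (or compatible with) $t_0$ in the appropriate sense — precisely those with $t_0\in W_X$, i.e.\ $H_0\le X$ — and on such cosets one can quotient by $\langle t_0\rangle$ to recognize $P_1\cap P_2$ as (a poset isomorphic to) the parabolic coset poset of a rank-$(n-1)$ reflection group, or at any rate as a spherical poset of rank $n-2$ by the inductive hypothesis combined with Proposition~\ref{prop:ordfilter}.

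Granting these three points, Lemma~\ref{lemm:inducposet} applies verbatim: $\bar\PP = P_1\cup P_2$ with $P_1,P_2$ spherical of rank $n-1$, with $P_1\cap P_2$ spherical of rank $n-2$, and with every chain contained in one of the two pieces; hence $\bar\PP$ is spherical of rank $n-1$, as claimed. This is the same skeleton as the proof of the analogous statement for the parking functions poset in~\cite[Section~5]{douvropoulosjosuatverges2}.

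The main obstacle I expect is the careful bookkeeping in identifying $P_1\cap P_2$ and verifying its rank and sphericity — one must check that cutting $\bar\PP$ by the hyperplane $H_0$ really produces, on the overlap, a poset governed by a smaller reflection group (so that induction can be invoked) rather than something more exotic, and that the rank drops by exactly one. The chain-covering condition, though intuitively clear from the convexity of weak-order intervals and the interplay between inversion sets and the truncations $\LL^{(A)}$, also requires a genuine (if short) argument: one needs that for a chain of cosets, the minimal-length representatives can be chosen simultaneously on one fixed side of $H_0$. The sphericity of $P_1$ itself is the least problematic part, being a direct consequence of Lemma~\ref{lemm:shell} once the fibered structure over $\LL$ is set up correctly.
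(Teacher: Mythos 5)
Your proposal correctly identifies Lemma~\ref{lemm:inducposet} and a weak-order decomposition as the right tools, but the specific two-piece decomposition you set up does not work, and several of its key claims are false as stated.

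First, the definition of $P_1$ and $P_2$ is internally inconsistent. If you classify $wW_X$ by whether $H_0\in\Inv(w)$ for the \emph{minimal length} representative $w$ (as you say explicitly), then $P_1$ and $P_2$ are disjoint, so $P_1\cap P_2=\varnothing$ and the Mayer--Vietoris argument of Lemma~\ref{lemm:inducposet} degenerates. If instead you mean ``the coset contains \emph{some} representative with $H_0\notin\Inv$'' (which is what your later discussion of $P_1\cap P_2$ presupposes), then the chain-covering condition does hold, but the identification of the overlap fails: it is \emph{not} true that $P_1\cap P_2 = \{wW_X : t_0\in W_X\}$. A coset can have representatives on both sides of $H_0$ without $t_0$ lying in $W_X$. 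For instance, in $W=\mathfrak{S}_3$ with $t_0=s_1$, $H_0=\Fix(s_1)$, and $W_X=\langle s_2\rangle$, the coset $s_1W_X=\{s_1,\,s_1s_2\}$ has $H_0\in\Inv(s_1)$ but $H_0\notin\Inv(s_1s_2)$, yet $s_1\notin\langle s_2\rangle$. So the overlap is larger and more complicated than a parabolic coset poset of rank $n-1$, and your inductive hypothesis does not apply to it.

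Second, the claimed symmetry ``left multiplication by $t_0$ exchanges $P_1$ and $P_2$'' is also false: left multiplication does not interact simply with \emph{right} inversion sets. In the same example, $u=s_2$ has $H_0\notin\Inv(u)$, and $t_0u=s_1s_2$ still has $H_0\notin\Inv(t_0u)$, so $t_0$ did not move this element to the other side. Third, and most seriously, the sphericity of $P_1$ itself is asserted but not argued; showing that $P_1$ is spherical of rank $n-1$ is essentially as hard as the original statement, and the gesture towards Lemma~\ref{lemm:shell} does not produce such an argument, since $P_1$ is a union of copies of $\LL$ over many different $w$, not a single truncated lattice.

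The paper's proof avoids all of this by running the induction over $|W|$ one element at a time along a linear extension of the right weak order: set $\bar\PP[A_i]$ to be the order filter generated by the first $i$ elements, and apply Lemma~\ref{lemm:inducposet} at step $i$ with $P_1=\bar\PP[A_i]$ and $P_2=\bar\PP[\{w_{i+1}\}]$. The second piece is a cone (topologically trivial), the chain-covering condition is automatic because a chain's minimal element determines which side it lies on, and crucially the overlap $\bar\PP[A_i]\cap\bar\PP[\{w_{i+1}\}]$ is identified via Lemma~\ref{lemm:isolemma} with the proper part of a single truncated lattice $\LL^{(\Inv(w_{i+1}))}$, which is shellable by Lemma~\ref{lemm:shell}. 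That fine-grained decomposition is what makes each step tractable, in contrast to the single binary split you propose.
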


\begin{proof}
    We follow the scheme of proof in~\cite[Section~5]{douvropoulosjosuatverges2}.  For any nonempty subset $A \subset W$, define:
    \[
        \bar\PP[A] := 
        \Big\{
            w W_X \;:\; X \in \LL\backslash\{0\},\; w \in A
        \Big\} \subset \bar\PP.
    \]
    This is the {\it order filter} of $\bar \PP$ generated by the elements $\{w\}$ with $w\in A$.  Now, let $w_1,w_2,\dots $ be a linear extension of the right weak order (an indexing of the elements of $W$, such that $w_i \leq w_j$ in the right weak order implies $i\leq j$). Let $A_i := \{ w_j \;:\; 1 \leq j \leq i\}$.  The idea is to show that $\bar\PP[A_i]$ is spherical by induction on $i$, using Lemma~\ref{lemm:inducposet}.  The maximal value of $i$ is $|W|$, and clearly we have $A_{|W|} = W$ and $\bar \PP[W] = \bar\PP$.  Also note that $\bar \PP[A_1]$ has a minimal element (namely $w_1$), so that this poset is topologically trivial.  

    The induction is as follows.  Assume that $\bar \PP[A_i]$ is spherical of rank $n-1$.  The poset $ \bar \PP[\{w_{i+1}\}]$ is too, because it is topologically trivial.  Each chain in $\bar\PP[A_{i+1}] = \bar\PP[A_{i}] \cup \bar\PP[\{w_{i+1}\}]$ is included in either $\bar\PP[A_{i}]$ or $\bar\PP[\{w_{i+1}\}]$ (because the minimal element of the chain is either in $\bar\PP[A_{i}]$ or $\bar\PP[\{w_{i+1}\}]$).  The last condition to check in order to apply Lemma~\ref{lemm:inducposet} is that $\bar\PP[A_i] \cap \bar\PP[\{w_{i+1}\}]$ is spherical of rank $n-2$.  Lemma~\ref{lemm:isolemma} below permits to identify this poset with the proper part of $\LL^{(\Inv(w_{i+1}))}$.  This poset is shellable by Lemma~\ref{lemm:shell} ($\Inv(w_{i+1})\neq\varnothing$ because $w_1=e$ is the unique element with empty inversion set), so that its proper part is spherical. 
    
    This completes the induction showing that $\PP[A_i]$ is spherical for all $i$.
\end{proof}

\begin{lemm} \label{lemm:isolemma}
    In the notations of the above proof, there is an isomorphism:
    \begin{align} \label{eq:isopp}
        \PP[A_i] \cap \PP[\{w_{i+1}\}] 
            \to
        \LL^{( \Inv(w_{i+1}) )},
    \end{align}
    explicitly given by the map $w_{i+1} W_X \mapsto X$.  
\end{lemm}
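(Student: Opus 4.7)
The plan is to verify that $\phi\colon w_{i+1}W_X\mapsto X$ is well-defined, injective, and order-preserving, and then to identify its image in $\LL$. Well-definedness and injectivity are immediate from the correspondence $X\leftrightarrow W_X$ between $\LL$ and parabolic subgroups (so the coset $w_{i+1}W_X$ determines $W_X$ as its setwise right-stabilizer, hence $X$), and order preservation in both directions follows from $w_{i+1}W_X\subseteq w_{i+1}W_{X'}$ iff $W_X\subseteq W_{X'}$ iff $X\le X'$ in $\LL$. The substantive content therefore reduces to characterizing those $X$ for which the coset $w_{i+1}W_X$ actually meets $A_i$, and recognizing the answer as (the proper part of) $\LL^{(\Inv(w_{i+1}))}$.

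For the forward direction, assume $H\in\Inv(w_{i+1})$ with $H\le X$ in $\LL$, equivalently $X\subseteq H$ as subspaces. Then the reflection $t_H$ fixes $X$ pointwise, hence $t_H\in W_X$, so $w_{i+1}t_H\in w_{i+1}W_X$. The strong exchange condition applied to the right descent $t_H$ gives $\ell(w_{i+1}t_H)=\ell(w_{i+1})-1$. Taking the linear extension $(w_k)_k$ to be length-respecting --- which is possible since the length function itself is a linear extension of the weak order --- forces $w_{i+1}t_H\in A_i$, so the coset meets $A_i$.

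For the backward direction, suppose $w_j\ne w_{i+1}$ lies in $w_{i+1}W_X\cap A_i$, so by length-compatibility $\ell(w_j)\le\ell(w_{i+1})$. I would invoke the classical result (Deodhar, Dyer) that each left coset of a parabolic subgroup $W_X$ has a unique minimum-length element $u^*$, characterized by no reflection of $W_X$ being a right descent of $u^*$. If $w_{i+1}$ were this $u^*$, then uniqueness would force every other coset element to be strictly longer than $w_{i+1}$, contradicting the existence of $w_j\ne w_{i+1}$ with $\ell(w_j)\le\ell(w_{i+1})$. Hence some reflection $t\in W_X$ is a right descent of $w_{i+1}$, so $\Fix(t)\in\Inv(w_{i+1})$ with $X\subseteq\Fix(t)$, placing $X$ in $\LL^{(\Inv(w_{i+1}))}$.

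The main obstacle is the backward direction; specifically, the appeal to the Deodhar--Dyer uniqueness of minimum-length coset representatives (equivalently, their characterization via absence of $W_X$-reflections among the right descents) is the essential input, as there is no obvious direct passage from the mere existence of $w_j$ in the coset to an actual descent of $w_{i+1}$ lying in $W_X$. The forward direction is comparatively elementary once one assumes a length-respecting linear extension, and the isomorphism then follows by matching images: $V$ is naturally excluded from the image (since $\{w_{i+1}\}\notin\bar\PP[A_i]$) and $\{0\}$ by working inside $\bar\PP$, recovering the proper part of $\LL^{(\Inv(w_{i+1}))}$ used in the proof of Proposition~\ref{prop:hatPspherical}.
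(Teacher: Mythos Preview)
Your argument is sound but only under the extra hypothesis that the linear extension $(w_k)_k$ is length-respecting, which you impose explicitly. The lemma as stated, however, refers to the arbitrary linear extension of the right weak order fixed in the proof of Proposition~\ref{prop:hatPspherical}. For such an arbitrary extension your forward direction can fail: the element $w_{i+1}t_H$ has smaller length than $w_{i+1}$, but it need not be below $w_{i+1}$ in the right weak order (for a general reflection $t$, $\ell(wt)<\ell(w)$ does not imply $wt\le_R w$), so it need not lie in $A_i$. Likewise your backward direction uses $j<i+1\Rightarrow \ell(w_j)\le\ell(w_{i+1})$, which again requires the length-respecting assumption.

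The paper's proof avoids this restriction by exploiting a stronger property of the minimal-length representative $u$ of $w_{i+1}W_X$: not only is $u$ the unique shortest element, but $u\le v$ in the right weak order for \emph{every} $v$ in the coset (since $v=up$ with $\ell(v)=\ell(u)+\ell(p)$). Both directions then reduce to deciding whether $u=w_{i+1}$. Forward: the existence of $t_H\in W_X$ with $\ell(w_{i+1}t_H)<\ell(w_{i+1})$ shows $w_{i+1}$ is not minimal, so $u<w_{i+1}$ and hence $u\in A_i$. Backward: any $w_j\in A_i\cap w_{i+1}W_X$ satisfies $u\le w_j$, and since $A_i$ is an order ideal in the weak order we get $u\in A_i$, whence $u\ne w_{i+1}$ and $w_{i+1}$ has a right descent in $W_X$. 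Your Deodhar--Dyer input is exactly the right tool; the fix is simply to use the weak-order comparison $u\le w_{i+1}$ (which the linear extension genuinely respects) in place of the length comparison.
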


\begin{proof}
Since this map is the restriction of the isomorphism $\PP[\{w_{i+1}\}] \to \LL$ (as in Proposition~\ref{prop:interval}), it remains to check:
    \begin{align} \label{eq:equival}
        w_{i+1} W_X \in \PP[A_i]
            \Longleftrightarrow
        X \in \LL^{( \Inv(w_{i+1}) )}. 
    \end{align}
    To do this, let $u$ be the minimal length representative of $w_{i+1} W_X$.  We have $u\leq w_{i+1}$ in the right weak order, moreover $u < w_{i+1}$ is equivalent to $u\in A_i$ (since, by construction, $A_i$ contains all elements smaller than $w_{i+1}$ in the right weak order).  Let us check that both sides of~\eqref{eq:equival} are equivalent to $u < w_{i+1}$:
    \begin{itemize}
        \item The left-hand side is equivalent to the existence of $v \in A_i$ such that $v\in w_{i+1} W_X$, which is equivalent to $u<w_{i+1}$ by unicity of the minimal length representative.
        \item The right-hand side is equivalent to the existence of $t\in T$ such that $\Fix(t)\in \Inv(w_{i+1})$ and $X \subset \Fix(t)$.  This is equivalent to the existence of $t\in W_X \cap T$, with $\ell(w_{i+1}t) < \ell(w_{i+1})$, {\it i.e.}, $w_{i+1}$ not being the minimal length representative in $W/W_X$.
    \end{itemize}
\end{proof}

For any poset $P$, let $\hat P$ denote the smallest bounded poset containing $P$, (obtained by adding a maximal/minimal element if it doesn't already have one.  (This notation agrees with $\hat\PP$ introduced earlier.)   A ranked poset $P$ is {\it Cohen-Macaulay} if every open interval in $\hat P$ is spherical.  For example, a shellable poset is Cohen-Macaulay.  See again~\cite{kozlov,wachs} for details.

\begin{theo}\label{theo1}
    The poset $\PP$ is Cohen-Macaulay.
\end{theo}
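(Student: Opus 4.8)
The plan is to verify the definition of Cohen--Macaulay directly: show that $\PP$ (equivalently $\hat\PP$) is ranked, and that every open interval of $\hat\PP$ is spherical. All of the topological substance is already contained in Propositions~\ref{prop:interval}, \ref{prop:ordfilter} and~\ref{prop:hatPspherical}, so the argument should reduce to bookkeeping on intervals.

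First I would describe every closed interval $[x,y]$ of $\PP$ with $x,y\in\PP$. Writing $x = wW_X$, the inclusion $x\subseteq y$ forces $w\in y$, so $y = wW_{X'}$ with $W_X\subseteq W_{X'}$; acting by $w^{-1}$ and using that a parabolic coset containing $e$ is a parabolic subgroup, one identifies $[x,y]$ with $\{W_Z\;:\;W_X\subseteq W_Z\subseteq W_{X'}\}$, that is, via the isomorphism $\LL\cong\{\text{parabolic subgroups}\}$, with the interval $[X,X']$ of the intersection lattice $\LL$. Specializing to $x$ a minimal element and $y=W$ recovers $[\{w\},W]\cong\LL$ (Proposition~\ref{prop:interval}), which is graded of rank $n$; since the unique maximal element of $\PP$ is $W$, every maximal chain of $\PP$ is a maximal chain of some such interval, hence has length $n$. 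Thus $\PP$ is ranked, with $\rk(wW_X)=n-\dim X$, and so is $\hat\PP$.

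Next I would run through the open intervals of $\hat\PP$, which fall into two families. An open interval not containing $\varnothing$ has the form $(x,y)$ with $x,y\in\PP$; by the previous paragraph it is isomorphic to an open interval of $\LL$, which, being an interval of a geometric lattice, is again a geometric lattice, hence shellable via the EL-labelling recalled above, hence spherical. An open interval containing $\varnothing$ has the form $(\varnothing,wW_X)$; by Proposition~\ref{prop:ordfilter} the closed interval $[\varnothing,wW_X]$ is the parabolic coset poset of the finite Coxeter group $W_X$ with a bottom element adjoined, so $(\varnothing,wW_X)$ is the proper part of that poset, which is spherical by Proposition~\ref{prop:hatPspherical} applied to $W_X$. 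The degenerate cases are harmless: if $W_X$ is trivial the open interval is empty (spherical of rank $-1$), which happens exactly when $wW_X$ is minimal in $\PP$; if $wW_X=W$ the open interval is $\bar\PP$, covered by Proposition~\ref{prop:hatPspherical} directly. Having shown that $\PP$ is ranked and that all open intervals of $\hat\PP$ are spherical, we conclude that $\PP$ is Cohen--Macaulay.

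I do not anticipate a real obstacle, precisely because Proposition~\ref{prop:hatPspherical} has already done the hard topological work; the only point needing care is the case analysis that the open intervals split cleanly into the two families above, and that in the first family Propositions~\ref{prop:interval} and~\ref{prop:ordfilter} genuinely identify the interval with an interval of the geometric lattice $\LL$. The mildly delicate bits are the boundary degeneracies (empty open intervals below the atoms of $\hat\PP$), which are handled by the standard convention that the empty order complex is the $(-1)$-sphere.
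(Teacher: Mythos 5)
Your proof is correct and follows essentially the same three-way case analysis as the paper's: open intervals $(x,y)$ with $x,y\in\PP$ are reduced to intervals of $\LL$ via Proposition~\ref{prop:interval}, open intervals $(\varnothing,y)$ with $y\neq W$ are reduced to the parabolic coset poset of $W_X$ via Proposition~\ref{prop:ordfilter}, and $\bar\PP$ itself is handled by Proposition~\ref{prop:hatPspherical}. The only cosmetic difference is that the paper frames the second case as induction on the rank of $W$, whereas you apply Proposition~\ref{prop:hatPspherical} directly to the finite Coxeter group $W_X$; both are valid, and your explicit handling of the degenerate boundary cases is a welcome bit of extra care.
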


\begin{proof}
    Let $(x,y)$ be an open interval in $\hat \PP$. 
    
    First, consider the case where $x,y\in \PP$.  By Proposition~\ref{prop:interval}, $(x,y)$ is isomorphic to an open interval in $\LL$.  This open interval is spherical, by shellability of $\LL$.

    Consider the case where $x$ is the minimal element ($x=\varnothing$, in the convention of the previous section). The open interval $(x,y)$ in $\hat \PP$ is thus the principal order ideal generated by $y$ in $\PP$.  By Proposition~\ref{prop:ordfilter}, an order ideal $\{ w'W_{X'} \in \PP \;:\; w'W_{X'} \subsetneq w W_X \}$ identifies to the poset $\overline{P}$ attached to the parabolic subgroup $W_X$.  By induction on the rank of $W$, we can assume that it is spherical.

    The last case to check is that $\overline{\PP}$ is spherical.  This is Proposition~\ref{prop:hatPspherical}.

    We thus have proved that every open interval in $\hat \PP$ is spherical, so $\PP$ is Cohen-Macaulay by definition. 
\end{proof}
%Let $\bar \PP$ be the proper part of $\PP$, {\it i.e.}, $\bar \PP := \PP - \{V,\{0\}\}$.  
%déjà écrit

%%%%%%%%%%%%%%%%%%%%%%%%%%%%
\section{Homology character}
%%%%%%%%%%%%%%%%%%%%%%%%%%%%
\label{sec:homo}

Let us introduce some characters that were studied by Orlik and Solomon~\cite{orliksolomon}.  We also refer to the review in~\cite[Section~3.1]{douvropoulosjosuatverges1}.  For $X\in\LL$, let 
\begin{equation} \label{eq:def_phi}
    \bphi_X := \Ind_{W_X}^W (\triv),
\end{equation}
{\it i.e.}, the trivial character of $W_X$ induced on $W$.  This is the character of the representation $\mathbb{C}^{W/W_X}$ (with the natural left action of $W$ on $W/W_X$).  For example, $\bphi_V$ is the character of the regular representation, and $\bphi_{\{0\}}$ is the trivial character.  The {\it parabolic Burnside ring} of $W$ is the subring of the character ring of $W$ linearly generated by $(\bphi_X)_{X\in\LL}$.  Let $\Theta$ be a set of representatives for the orbits of $W$ acting on $\LL$.  Then $(\bphi_X)_{X\in\Theta}$ is a linear basis of the parabolic Burnside ring.  %Orlik and Solomon showed that the Möbius function of $\LL$ is related with the parabolic Burnside ring.

Recall from the introduction that the character of $W$ acting on $\tilde H_{n-1}(\bar P)$ is denoted by $\bxi$.  Since $\PP$ is Cohen-Macaulay, $\bxi$ can be computed via {\it Whitney homology}.  This technique was introduced by Sundaram~\cite{sundaram}, see also Wachs~\cite[Section~4.4]{wachs}.

\begin{theo}\label{theo2}
    Let $\mu$ denote the Möbius function of $\LL$, and $\mu(X) = \mu(X,\{0\})$ for $X\in \LL$. The character of $W$ acting on $\tilde H_{n-1}(\bar\PP)$ is:
    \begin{align} \label{eq:formulaxi1}
        \bxi = (-1)^n \sum_{X\in \LL} \mu(X) \cdot  \bphi_X.
    \end{align}
    Alternatively, we have
    \begin{align} \label{eq:formulaxi2}
        \bxi = (-1)^n \sum_{X\in \Theta} \mu(X) \cdot [W:N(W_X)] \cdot \bphi_X,
    \end{align}
    where $N(W_X)$ is the normalizer of $W_X$ in $W$.
\end{theo}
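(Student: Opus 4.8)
The plan is to compute $\bxi$ via Whitney homology, exactly as in Sundaram's technique for Cohen-Macaulay posets (see Wachs~\cite[Section~4.4]{wachs}). Since $\PP$ is Cohen-Macaulay of rank $n$ (Theorem~\ref{theo1}), and $\hat\PP$ is the associated bounded poset with bottom $\varnothing$ and top $W$, the top homology $\tilde H_{n-1}(\bar\PP)$ is, as a $W$-representation, the alternating sum over the Whitney homology modules: in character form,
\[
    \bxi = \sum_{k\geq 0} (-1)^{k} \,\ch\big( WH_k(\hat\PP) \big),
\]
where $WH_k(\hat\PP) = \bigoplus_{\varnothing < x} \tilde H_{k-2}\big( (\varnothing, x) \big)$, the sum running over $x \in \PP$ of rank $k$ (so $x$ is a coset $wW_X$ with $X$ of corank $k$ in $\LL$, i.e. $\dim X = n-k$). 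The $W$-action permutes these summands according to its action on the cosets of a given rank.

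The first key step is to identify each open interval $(\varnothing, wW_X)$. By Proposition~\ref{prop:ordfilter}, the order ideal below $wW_X$ is isomorphic to the parabolic coset poset of $W_X$; adjoining $\varnothing$ and removing the top $wW_X$, the open interval $(\varnothing, wW_X)$ is the proper part $\overline{\PP_{W_X}}$ of that poset. If $\dim X = n-k$, then $W_X$ has rank $k$, so by Proposition~\ref{prop:hatPspherical} this proper part is spherical of rank $k-1$, and its reduced homology sits in degree $k-2$, consistent with the indexing above. Its dimension (i.e. the value of $(-1)^{k-1}$ times the reduced Euler characteristic) is, by Philip Hall's theorem, $(-1)^{k-1}\mu(\hat\PP_{W_X})$; and since the interval $[\{e\},W_X]$ in $\PP_{W_X}$ is $\LL_{W_X} \cong [X,\{0\}]$ in $\LL$, one gets that the relevant Möbius number equals $\mu(X,\{0\}) = \mu(X)$ up to a predictable sign. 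The second key step is therefore bookkeeping: collecting, for each $X$, the contribution $(-1)^k$ (the Whitney sign) times the character of $W$ acting on $\bigoplus_{wW_X} \tilde H_{k-2}\big((\varnothing,wW_X)\big)$. Because $W$ acts transitively on the cosets $\{wW_X : w\in W\}$ for fixed $X$ with stabilizer $W_X$, and (by an argument I expect to mirror~\cite{douvropoulosjosuatverges2}) $W_X$ acts on $\tilde H_{k-2}(\overline{\PP_{W_X}})$ with a character whose restriction pattern lets one induce up cleanly, the total contribution of the $W$-orbit of $X$ should be $(-1)^n \mu(X)\,\bphi_X$ — the global sign $(-1)^n$ emerging after combining the Whitney sign with the sign in Hall's theorem. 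Summing over all $X\in\LL$ gives~\eqref{eq:formulaxi1}. For~\eqref{eq:formulaxi2}, one groups the sum over $\LL$ into $W$-orbits: for $X\in\Theta$, the orbit has size $[W:N(W_X)]$ (the stabilizer of $X\in\LL$ under the $W$-action is the normalizer $N(W_X)$, since $wW_Xw^{-1} = W_{wX}$), and $\mu$ and $\bphi$ are constant on orbits, yielding the stated reindexing.

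The main obstacle I anticipate is the precise $W$-equivariant bookkeeping in the Whitney homology step: one must verify that the induced-representation structure comes out exactly as $\bphi_X = \Ind_{W_X}^W(\triv)$, i.e. that the homology of each interval $(\varnothing, wW_X)$ contributes as a trivial $W_X$-isotypic piece of the right multiplicity, rather than some more complicated $W_X$-character that would spoil the clean formula. This is where the phrase "straightforward application of Sundaram's technique" is doing real work: the Cohen-Macaulay property guarantees the homology is concentrated in one degree, but one still needs that the $W_X$-action on $\tilde H_{k-2}(\overline{\PP_{W_X}})$, after inducing, assembles into $\bphi_X$ scaled by the Euler-characteristic dimension. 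I expect this follows by the same induction-on-rank device used for Theorem~\ref{theo1} together with the observation that, at the level of the whole poset, the graded pieces of Whitney homology are genuinely permutation modules on cosets — so that "character of $W$ on $WH_k$" is literally a nonnegative integer combination of the $\bphi_X$, and only the coefficients (governed by $\mu$) need to be pinned down. Once that structural point is in hand, everything else is the routine Möbius-function and sign arithmetic sketched above.
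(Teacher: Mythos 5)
Your overall plan — Whitney homology for the Cohen--Macaulay poset — is the right framework and matches the paper, but you orient Whitney homology the wrong way and this creates a genuine gap that your own sketch cannot close. You take the pieces
$WH_k(\hat\PP)=\bigoplus_x \tilde H_{k-2}\big((\varnothing,x)\big)$ over cosets $x=wW_X$ of rank $k$. The open interval $(\varnothing,wW_X)$ is the proper part $\overline{\PP_{W_X}}$ of the coset poset of the smaller group $W_X$, and the stabilizer $wW_Xw^{-1}$ of $wW_X$ acts on this interval \emph{nontrivially}: its action on $\tilde H_{k-2}(\overline{\PP_{W_X}})$ is precisely the character $\bxi_{W_X}$ for the smaller group, which is exactly the kind of object the theorem is trying to compute. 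Hence the orbit contribution is $\Ind_{W_X}^W(\bxi_{W_X})$, which is \emph{not} a multiple of $\bphi_X=\Ind_{W_X}^W(\triv)$, and your hope that ``the graded pieces of Whitney homology are genuinely permutation modules on cosets'' is false for this choice of direction. You would be forced into a recursive formula and a nontrivial Möbius-type cancellation across ranks that you do not carry out. You also mis-identify $\LL_{W_X}$: the intersection lattice of $W_X$ is the interval $[V,X]$ in $\LL$ (elements $\leq X$), not $[X,\{0\}]$, so the Möbius number you extract is not $\mu(X)$ in the first place.

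The paper's proof avoids all of this by applying Sundaram's/Wachs' theorem to the \emph{dual} poset, so that the relevant intervals are $(W_X,W)$ going \emph{up} to the top $W$. By Proposition~\ref{prop:interval} these are isomorphic to intervals $(X,\{0\})$ in $\LL$, whose homology is known (shellability of $\LL$ gives free homology of rank $|\mu(X)|$ in the top degree), and — this is the decisive observation — $W_X$ acts \emph{trivially} on $(X,\{0\})$, since any $Y$ in this interval satisfies $Y\subset X\subset\Fix(w)$ for all $w\in W_X$. Thus the orbit contribution really is $|\mu(X)|\cdot\Ind_{W_X}^W(\triv)=|\mu(X)|\cdot\bphi_X$, giving the clean permutation-module structure you were hoping for. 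Without this dualization and this trivial-action observation, the ``bookkeeping'' you call routine does not go through.
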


\begin{proof}
    This is a direct application of the methods of Sundaram~\cite{sundaram}. We use the formulation in~\cite[Theorem~4.4.1]{wachs}, with the only {\it caveat} that we need to consider the dual poset $\PP^*$ to match the exact formulation of this reference (the dual poset is defined via $x\leq y$ in $\PP$ iff $x\geq y$ in $\PP^*$ and it satisfies $\mu_\PP(x,y)=\mu_{\PP^*}(y,x)$).
    
    First note that the parabolic subgroups $W_X$ are orbit representatives for the action of $W$ on $\PP$.  For any such $W_X$, the open interval $(W_X,W)$ in $\PP$ identifies to the interval $(X,\{0\})$ in $\LL$ via the inverse of the map in~\eqref{eq:isoLL}.  Since $\LL$ is shellable (and consequently Cohen-Macaulay), the homology of the interval $(X,\{0\})$ is given by:
    \[
        \tilde H_i\big( (X,\{0\}) \big)
            =
        \begin{cases}
            \mathbb{Z}^{|\mu(X)|} & \text{ if } i=\dim(X)-2, \\
            0 & \text{ otherwise}.
        \end{cases}
    \]
    Note also that $W_X$ acts trivially on $(X,\{0\})$ (for $Y$ in this interval and $w\in W_X$, we have $ Y \subset X \subset \Fix(w)$).  By definition, $W_X$ thus contributes to the $k$th {\it Whitney homology group} $WH_k(\PP)$ (where $k=\dim(X)$) by a term $|\mu(X)|\cdot \bphi_X = (-1)^{\dim(X)} \mu(X) \cdot \bphi_X$, so that
    \begin{align} \label{eq:WH}
        WH_k(\PP) 
            =
        \sum_{X \in \LL, \; \dim(X)=k}  (-1)^k \mu(X) \cdot \bphi_X.
    \end{align}
    The theorem by Sundaram mentioned at the beginning of this proof, in presence of the Cohen-Macaulay property (Theorem~\ref{theo1}), says that we have the following identity in the representation ring of $W$:
    \begin{align} \label{eq:sumWH}
        \tilde H_{n-1}(\bar\PP) 
            =
        \sum_{k=0}^{n-1} (-1)^{n+k} WH_k(\PP).
    \end{align}
    Combining~\eqref{eq:WH} and~\eqref{eq:sumWH} gives~\eqref{eq:formulaxi1}.

    The alternative formula in~\eqref{eq:formulaxi2} gives the coefficients of $\bxi$ in the basis $(\bphi_X)_{X \in \Theta}$.  To get this, note  that $\bphi_X = \bphi_Y$ if $X=w(Y)$ for some $w\in W$.  It remains only to show that the orbit of $X$ for the action of $W$ has cardinality $[W:N(W_X)]$.  This comes from the orbit-stabilizer theorem, because the stabilizer of $X$ is $N(W_X)$ (we have $w(X)=X$ iff $w W_X w^{-1} = W_X$).
\end{proof}

\begin{rema}[Rank-selected subcomplexes]\label{rem:rank_sel_subposets}
Our proof of Theorem~\ref{theo2} extends verbatim to a more general case, which we sketch here and return to in Remark~\ref{rem:equiv_h}. Recall the notation $\hat \PP = \PP \cup \{ \varnothing\}$; the poset $\hat \PP$ has rank $n+1$, with $\rk_{\hat\PP}(\emptyset)=0$, $\rk_{\hat \PP}(W)=n+1$, and in general, $\rk_{\hat\PP}(wW_X)=1+\operatorname{codim}(X)$. For any subset $R\subseteq [n]:=\{1,2,\ldots,n\}$, we define the \emph{rank-selected subposet} 
\[
\PP_R:=\{x\in\hat\PP:\ \rk_{\hat\PP}(x)\in R\}.
\]

\noindent Rank-selected subposets of Cohen-Macaulay posets are themselves Cohen-Macaulay \cite[Thm.~6.4]{baclawski} so that in particular, $\PP_R$ is spherical or rank $|R|-1$ (note that $\bar \PP=\PP_{[n]}$).

\noindent Similarly, we define rank-selected subposets of the intersection lattice $\LL$:
\[
\LL_R:=\{X\in\LL:\ 1+\operatorname{codim}(X)\in R\},
\]
and the posets $\LL_R\cup\{0\}$ with $\{0\}$ as a maximum element. We write $\mu_R$ for the M{\"o}bius function of $\LL_R\cup\{0\}$ and $\mu_R(X):=\mu_R(X,\{0\})$.

\noindent Again, $W$ acts on $\PP_R\cup\{W\}$ with orbit representatives given in terms of parabolic subgroups; such intervals in $\PP_R\cup \{W\}$ correspond to intervals in $\LL_R\cup\{0\}$; and the M{\"o}bius function is alternating in sign by \cite[Corol.~4.3]{stanley_alt}. Exactly as in the proof of Theorem~\ref{theo2} we get
\begin{align}
    \tilde H_{|R|-1}(\PP_R)=(-1)^{|R|}\sum_{X\in\LL_R\cup\{0\}}\mu_R(X)\cdot\bphi_X.
\end{align} 
\end{rema}

We announced in the introduction that $\bxi \otimes \bepsilon$ is positive in the basis $(\bphi_X)_{X\in \Theta}$.  One of the first related questions is to find the multiplicity of the trivial character in $\bxi \otimes \bepsilon$ (it corresponds to the number of orbits when we think of the associated action of $W$ on a set).  Clearly, this is also the multiplicity of $\bepsilon$ in $\bxi$.

\begin{prop}
    The multiplicity of $\triv$ and $\bepsilon$ in $\bxi$ are respectively $0$ and $(-1)^n\mu(\LL)$.
\end{prop}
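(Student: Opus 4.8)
The plan is to read off both multiplicities as the inner products $\langle \bxi,\triv\rangle$ and $\langle\bxi,\bepsilon\rangle$ in the character ring of $W$, starting from the explicit formula $\bxi = (-1)^n\sum_{X\in\LL}\mu(X)\,\bphi_X$ of Theorem~\ref{theo2}. Since $\bphi_X = \Ind_{W_X}^W(\triv)$, Frobenius reciprocity rewrites each coefficient $\langle\bphi_X,\psi\rangle$ (for $\psi = \triv$ or $\psi = \bepsilon$) as $\langle\triv,\operatorname{Res}_{W_X}^W\psi\rangle_{W_X}$, so the whole computation reduces to understanding the restrictions of $\triv$ and $\bepsilon$ to parabolic subgroups, together with one Möbius identity.

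For the trivial character I would note that $\operatorname{Res}_{W_X}^W\triv = \triv$, hence $\langle\bphi_X,\triv\rangle = 1$ for every $X\in\LL$, and therefore $\langle\bxi,\triv\rangle = (-1)^n\sum_{X\in\LL}\mu(X,\{0\})$. Since $\LL$ is a bounded lattice whose bottom $V$ and top $\{0\}$ are distinct (as $n\ge 1$), the defining recursion of the Möbius function gives $\sum_{X\in\LL}\mu(X,\{0\}) = 0$, so the multiplicity of $\triv$ in $\bxi$ is $0$.

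For the sign character I would first identify $\operatorname{Res}_{W_X}^W\bepsilon$ with the sign character of the reflection group $W_X$: writing $\bepsilon(w) = \det_V(w)$ and using the $W_X$-stable decomposition $V = X\oplus X^\perp$, on which $W_X$ acts trivially on the summand $X$, one gets $\bepsilon(w) = \det_{X^\perp}(w)$ for $w\in W_X$, which is exactly the sign character of $W_X$ on its essential space. This character is trivial precisely when $W_X$ has no reflections, i.e. when $X$ lies in no reflecting hyperplane, i.e. when $X = V$ (every other element of $\LL$ is an intersection of reflecting hyperplanes, and $W_V=\{e\}$). Hence $\langle\bphi_X,\bepsilon\rangle$ equals $1$ for $X=V$ and $0$ otherwise, which yields $\langle\bxi,\bepsilon\rangle = (-1)^n\mu(V,\{0\}) = (-1)^n\mu(\LL)$.

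I do not expect a genuine obstacle: the statement is a short corollary of Theorem~\ref{theo2}. The only step needing a little care is the identification of $\operatorname{Res}_{W_X}^W\bepsilon$ with the sign character of $W_X$, together with the observation that its triviality forces $X=V$; everything else is Frobenius reciprocity and the elementary vanishing $\sum_X\mu(X,\hat 1)=0$ in a nontrivial bounded poset.
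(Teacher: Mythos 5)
Your proof is correct, and it takes a genuinely (if mildly) different route from the paper's on the one nontrivial sub-step. Both proofs start from \eqref{eq:formulaxi1}, both observe that $\langle\bphi_X,\triv\rangle=1$ for every $X$ (you via Frobenius reciprocity, the paper via transitivity of $W$ on $W/W_X$ — these are of course the same fact), and both finish with the Möbius vanishing $\sum_{X\in\LL}\mu(X,\{0\})=0$ and the $X=V$ term for $\bepsilon$. The divergence is in how $\langle\bphi_X,\bepsilon\rangle$ is computed: the paper invokes Solomon's formula \eqref{eq:solomon} to convert this to the already-known $\triv$-multiplicities (giving $\sum_{J\subseteq I}(-1)^{\#J}=\delta_{I,\varnothing}$), whereas you use Frobenius reciprocity to reduce to $\langle\triv,\operatorname{Res}^W_{W_X}\bepsilon\rangle_{W_X}$ and then argue directly that $\operatorname{Res}^W_{W_X}\bepsilon$ is the sign character of the reflection group $W_X$ on $X^\perp$, which is trivial iff $W_X$ has no reflections iff $X=V$. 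Your route is self-contained and doesn't require Solomon's formula (which the paper only states later, in Section~\ref{sec:vectors}); the paper's route is slightly shorter given that it will need \eqref{eq:solomon} anyway. Both the geometric identification $\operatorname{Res}^W_{W_X}\bepsilon=\bepsilon_{W_X}$ and the observation that a nontrivial reflection group has nontrivial sign character are correct, so there is no gap.
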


\begin{proof}
    The multiplicity of $\triv$ in $\bphi_X$ is $1$ (because it is the character of $W/W_X$, which contains a unique orbit).  The multiplicity of $\sign$ in $\Phi_X$ is $1$ if $X = \varnothing$, $0$ otherwise (this can be deduced from the multiplicity of $\triv$ via Solomon's formula below, see~\eqref{eq:solomon}).

    It follows then from~\eqref{eq:formulaxi1} that the multiplicity of $\triv$ in $\bxi$ is $(-1)^n \sum_{X\in\LL} \mu(X) = 0$, and the multiplicity of $\bepsilon$ is $(-1)^n \mu(V) = (-1)^n \mu(\LL)$. 
\end{proof}

\begin{rema}
    Assume that $W$ is irreducible, and let $e_1,\dots,e_n$ be its exponents.  Then $\mu(\LL) = (-1)^n \prod_{i=1}^n e_i$.  
\end{rema}

By the previous proposition, the multiplicity of $\triv$ in $\bxi\otimes\bepsilon$ is $|\mu(V)|$.  By writing
\[
    \bxi\otimes\bepsilon
        =
    \sum_{X\in \Theta} c_X \bphi_X,
\]
it follows that $\sum_{X\in\Theta} c_X = |\mu(V)| = \mu(\LL)$.  To prove that $c_X \geq 0$ and find a combinatorial interpretation, it is natural to look for combinatorial sets with cardinality $|\mu(\LL)|$.  A natural candidate is the set of {\it cuspidal elements} in $W$ ({\it i.e.}, those $w\in W$ such that $\dim(\Fix(w))=0$).  But what really works here is the simplicial complex defined in the next section.

%%%%%%%%%%%%%%%%%%%%%%%%%%%%%%%%%%%%%%
\section{The positive chamber complex}
%%%%%%%%%%%%%%%%%%%%%%%%%%%%%%%%%%%%%%
\label{sec:poschambercomp}

We mentioned above that the Coxeter complex $\CC$ can be seen as a coset poset (with respect to standard parabolic subgroups).  But we take here a geometric point of view in terms of chambers.  Let $F \subset V$ be the fundamental chamber, so that the other chambers are the images $w(F)$ for $w\in W$.  The closure $\overline{F}$ is a full-dimensional simplicial cone in $V$, and its faces are also simplicial cones.   Let $\CC$ denote the {\it Coxeter complex}.  By definition, every face $f\in \CC$ can be written $w(f')$ where $w\in W$ and $f'$ is a face of $\overline{F}$.  We refer to~\cite{bjornerbrenti}.  The geometric realization of $\CC$ as an abstract simplicial complex is a triangulation of the $(n-1)$-dimensional sphere, which is obtained explicitly by intersecting the simplicial cones in $\CC$ with the unit sphere of $V$.  This triangulation of the $(n-1)$-dimensional sphere will be used to work out concrete examples of $\CCP$ in rank 3.

\begin{rema} 
    Below, the dimension $\dim(f)$ of a face $f\in \CC$ means $\dim_{\mathbb{R}}(\Span(f))$.   We use the same convention  for the subcomplex $\CCP \subset \CC$.  If we were to consider $\CC$ as an abstract simplicial complex, the dimension of $f\in\CC$ as a simplex would be $\dim_{\mathbb{R}}(\Span(f))-1$.
\end{rema}
\begin{rema}
    The $1$-dimensional cones in $\CC$, ({\it i.e.}, its vertices as an abstract simplicial complex) are called {\it rays}.  In the crystallographic case, each ray is the positive span of a weight (in the sense of Lie theory), and rays of $\overline{F}$ correspond to fundamental weights.  
\end{rema}

Let $\rho \in F$ be a generic vector.  Here, {\it generic} means that $\rho$ isn't orthogonal to any ray of $\CC$.  Let $H := \rho^{\perp}$, and
\begin{align} \label{def:H}
    H^- := \big\{ \alpha \in \mathbb{R}^n \;:\; \langle \alpha | \rho \rangle \leq 0 \big\},  
    \qquad
    H^+ := \big\{ \alpha \in \mathbb{R}^n \;:\; \langle \alpha | \rho \rangle \geq 0 \big\}.
\end{align}

\begin{defi}
    The {\it positive chamber complex} is defined by
    \[
        \CCP :=
        \{
            f \in \CC \;:\; f \subset H^+
        \}.
    \]
    It is a subcomplex of the Coxeter complex. 
\end{defi}

Let us give examples in rank 3, where the complex is $2$-dimensional.  We can see $\CC$ as a triangulation of the $2$-dimensional unit sphere in a $3$-dimensional real vector space, then use stereographic projection on the plane. Each hyperplane thus becomes a circle or a straight line.  The examples are in Figures~\ref{fig:ccpA3_1},~\ref{fig:ccpA3_2},~\ref{fig:ccpB3_1},~\ref{fig:ccpB3_2}.
The generic hyperplane is the red circle.  The gray chamber is the fundamental chamber, and the rest of $\CCP$ is in light gray.  These examples in rank 3 lead to the following (negative) observations.
\begin{itemize}
    \item The combinatorial structure of $\CCP$ depends on the chosen generic vector $\rho$.  For example, the two examples in type $A_3$ (respectively, $B_3$) are non-isomorphic complexes. 
    \item The complex $\CCP$ might not be convex in $\CC$.  In all examples but the first one, there is a pair of chambers in $\CCP$ and a minimal gallery between them which doesn't stay in $\CCP$.
\end{itemize}

We can already see why $\rho$ is required not to be orthogonal to any ray.  Indeed, this condition ensures that each chamber $C$ of $\CC$ is such that: 
\begin{itemize}
    \item either $C$ is in the interior of $H^+$ or $H^-$, 
    \item or $H$ meets (the interior of) $C$. 
\end{itemize}
Consequently, a perturbation of $\rho$ (equivalently, of $H$) does not change the fact that a given chamber is in $H^+$ or not.  This means that $\CCP$ is invariant under perturbation of $\rho$.  It is natural to introduce the hyperplane arrangement
\[
    \DD := \Big\{ f^\perp \;:\; f \text{ ray of } \CC \Big\}
\]
in $V$, called the {\it dual arrangement} of $W$.  See Figure~\ref{fig:dualA3} in type $A_3$, and Figure~\ref{fig:dualB3} in type $B_3$.  The regions of the dual arrangement are open sets where $\CCP$ is constant (as a function of $\rho$).

Let us examine what happens when $\rho$ crosses an hyperplane of $\mathcal{D}$.  Denote by $\rho_1$ the choice taken in Figure~\ref{fig:ccpA3_1}, $\rho_2$ the choice taken in Figure~\ref{fig:ccpA3_2}, and $\rho'$ the point where a shortest path from $\rho_1$ to $\rho_2$ crosses an hyperplane of $\mathcal{D}$.  Their locations in $F$ is approximately as follows (repeating the picture in Figure~\ref{fig:dualA3}):
\[
    \begin{tikzpicture}[scale=5]
        \draw[fill,color=gray] (0.318,0) arc(30:45:2) arc(135:150:2);
        \draw[color=green] (0,0) -- (0,0.5);
        \draw[color=green] (0,0) -- (0.4243,0.3);
        \draw[color=green] (0,0) -- (-0.4243,0.3);
        \draw (0.318,0) arc(30:45:2) arc(135:150:2);
        \node[color=red] at(0.12,0.2){$\rho_1$};
        \node[color=red] at(0.2,0.06){$\rho_2$};
        \node[color=red] at(0.17,0.12){$\rho'$};
    \end{tikzpicture}
\]
%In Figure~\ref{fig:dualA3}, we see that $\DD$ divides the fundamental chamber in four regions.  When $\rho$ is in the lower (respectively, upper) right region, the complex $\CCP$ is given in Figure~\ref{fig:ccpA3_1} (respectively, Figure~\ref{fig:ccpA3_2}).  By a vertical symmetry, we only to consider these two cases. In Figure~\ref{fig:dualB3}, we see that $\DD$ divides the fundamental chamber in two regions.  When $\rho$ is in the lower left (respectively, upper right) region, the complex $\CCP$ is illustrated in Figure~\ref{fig:ccpB3_1} (respectively, Figure~\ref{fig:ccpB3_2}).
Let $f$ be the unique ray such that $\rho' \in f^\perp$, $f \in \CCP(\rho_1)$, and $-f \in \CCP(\rho_2)$.  We have the following observations.
\begin{itemize}
    \item The facets in $\CCP(\rho_1)$ incident to $f$ are exactly those not in $\CCP(\rho_2)$.
    \item Similarly, the facets in $\CCP(\rho_2)$ incident to $-f$ are exactly those not in $\CCP(\rho_1)$
    \item There is a bijection between the facets described in the previous two items (explicitly, compose the antipodal map $\alpha\mapsto -\alpha$ defined on all chambers with the antipodal maps defined on chambers adjacent to $f$). 
    \item We have $\CCP(\rho') = \CCP(\rho_1) \cup \CCP(\rho_2)$, in particular $\CCP(\rho')$ contains strictly more facets than $\CCP(\rho_1)$ or $\CCP(\rho_2)$.
\end{itemize}
As a consequence, the number of facets of $\CCP(\rho)$ is minimal if and only if $\rho$ is generic. 

After these observations, we will (following~\cite{bastidashohlwegsaliola}) use the result below to obtain the number of facets of $\CCP$ in the generic case.

\begin{prop}[Greene and Zaslawky,~\cite{greenezaslavsky}] \label{prop:greenezaslawsky}
    Let $\mathcal{A}$ be an hyperplane arrangement in a real vector space, and $L$ the associated intersection lattice.  Let $H$ be a generic hyperplane in the same vector space, and $H^+$ one of the half-spaces defined by $H$.  Then, the Möbius invariant $\mu(L)$ is, in absolute value, the number of regions of $\mathcal{A}$ in $H^+$.
\end{prop}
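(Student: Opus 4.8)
To prove the proposition, the plan is to reduce the count to Zaslavsky's classical formula for the number of bounded regions of a hyperplane arrangement, by slicing the chambers that lie in $H^+$ with a generic affine hyperplane. I will carry this out in the case that is actually used below, namely $\mathcal{A}$ central and essential (for instance the reflection arrangement of $W$) and $H$ a generic linear hyperplane; the modifications needed for an arbitrary arrangement are indicated at the end.

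So write $\ell$ for a linear form with $H=\{\ell=0\}$ and $H^+=\{\ell\geq0\}$, and consider the affine section $\mathcal{A}_1:=\{K\cap S:K\in\mathcal{A}\}$ inside the affine hyperplane $S:=\{\ell=1\}\cong\mathbb{R}^{n-1}$. First I would establish the bijection
\[
\{\,\text{regions }R\text{ of }\mathcal{A}\text{ with }R\subseteq H^+\,\}\ \longleftrightarrow\ \{\,\text{bounded regions of }\mathcal{A}_1\,\},\qquad R\longmapsto R\cap S .
\]
Each region $R$ is an open polyhedral cone; genericity of $H$ (it contains no $1$-dimensional flat of $\mathcal{A}$) forces every extreme ray of the closed cone $\overline R$ to lie in $\{\ell>0\}$ or in $\{\ell<0\}$, so that $R\subseteq H^+$ is equivalent to $\ell>0$ on $\overline R\setminus\{0\}$, which is equivalent to $\overline R\cap S$ being a compact polytope; in that situation $R\cap S$ is a bounded region of $\mathcal{A}_1$. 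Conversely, a region $R$ with $R\cap S\neq\varnothing$ and bounded must have $\ell>0$ on $\overline R\setminus\{0\}$, hence $R\subseteq H^+$; and every region of $\mathcal{A}_1$ has the form $R\cap S$ for a unique region $R$ of $\mathcal{A}$ (the $R\cap S$ partition $S$ minus the sliced hyperplanes, and each is convex hence connected). This yields the bijection.

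Then I would invoke Zaslavsky's theorem: for an essential arrangement $\mathcal{B}$ in $\mathbb{R}^d$ the number of bounded regions equals $(-1)^d\chi_{\mathcal{B}}(1)$, where $\chi_{\mathcal{B}}(t)=\sum_{Y\in L(\mathcal{B})}\mu_{L(\mathcal{B})}(\hat 0,Y)\,t^{\dim Y}$, and apply it to $\mathcal{B}=\mathcal{A}_1$ (which is essential in $S$, since genericity makes the $1$-dimensional flats of $\mathcal{A}$ cross $S$). It remains to evaluate $\chi_{\mathcal{A}_1}(1)$: genericity also guarantees that $\{0\}$ is the only flat of $\mathcal{A}$ contained in $H$, so $X\mapsto X\cap S$ is an isomorphism from $L(\mathcal{A})\setminus\{\{0\}\}$ onto $L(\mathcal{A}_1)$ that lowers dimension by one and preserves Möbius values; hence
\[
\chi_{\mathcal{A}_1}(1)=\sum_{X\in L(\mathcal{A}),\;X\neq\{0\}}\mu_L(V,X)=-\,\mu_L(V,\{0\})=-\mu(L),
\]
using $\sum_{X\in L(\mathcal{A})}\mu_L(V,X)=0$. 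Combining the two steps, the number of regions of $\mathcal{A}$ contained in $H^+$ equals $(-1)^{n-1}\bigl(-\mu(L)\bigr)=(-1)^n\mu(L)$, which has absolute value $|\mu(L)|$.

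The main obstacle is the genericity bookkeeping: one has to verify that the single hypothesis "$H$ contains no $1$-dimensional flat of $\mathcal{A}$" simultaneously supplies the cone/section dichotomy used in the bijection and the semilattice isomorphism $L(\mathcal{A})\setminus\{\{0\}\}\cong L(\mathcal{A}_1)$. For the proposition in full generality (an arbitrary, possibly affine arrangement) the same section argument goes through after coning $\mathcal{A}$ up to a central arrangement in one more dimension; the extra points are to match $\mu(L)$ with the Möbius invariant of the coned intersection lattice and to make explicit the general-position (and positional) assumptions on $H$ built into the statement in \cite{greenezaslavsky}. Since only the central essential case intervenes in what follows, I would present that case in the paper and refer to \cite{greenezaslavsky} for the rest.
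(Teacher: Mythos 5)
Your argument is correct, but note that the paper does not prove this proposition at all: it is quoted from Greene--Zaslavsky \cite{greenezaslavsky} and used as a black box, so your slicing proof is genuinely supplementary rather than parallel to anything in the text. What you do is the standard reduction: cut the chambers lying in $H^+$ by the affine hyperplane $S=\{\ell=1\}$, identify them with the bounded regions of the sliced arrangement $\mathcal{A}_1$, and then apply Zaslavsky's bounded-region formula together with the semilattice isomorphism $L\setminus\{\{0\}\}\cong L(\mathcal{A}_1)$ and the M\"obius recursion $\sum_{X\in L}\mu(V,X)=0$; the sign bookkeeping $(-1)^{n-1}\chi_{\mathcal{A}_1}(1)=(-1)^n\mu(L)=|\mu(L)|$ is right. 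What this buys is a self-contained proof in exactly the generality the paper needs (central essential arrangements: the Coxeter arrangement in Proposition~\ref{prop:greenezaslawsky} itself and the restrictions $\mathcal{A}^X$ in Theorem~\ref{theo:xi_fvec}), at the cost of some genericity bookkeeping that you should make explicit if this were to be included: (i) the claim that $\{0\}$ is the only flat of $\mathcal{A}$ contained in $H$ follows because the upper interval $[X,\{0\}]$ is geometric of rank $\dim X$, so any positive-dimensional flat contains a $1$-dimensional flat, which genericity excludes from $H$; (ii) the converse half of your bijection (nonempty bounded slice $\Rightarrow R\subseteq H^+$) needs the observation that if $\overline R$ had extreme rays on both sides of $H$ then $\overline R\cap S$ would be unbounded, since extreme rays in $H$ itself are ruled out by genericity and $\overline R$ is pointed by essentiality; and (iii) your genericity hypothesis (``$H$ contains no $1$-dimensional flat of $\mathcal{A}$'') coincides with the paper's (``$\rho$ not orthogonal to any ray of $\CC$'') because the rays of the Coxeter complex span exactly the $1$-dimensional flats of the reflection arrangement. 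With those three points spelled out, the proof is complete for the case the paper uses, and deferring the general (coned) case to \cite{greenezaslavsky} is reasonable.
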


So, the number of facets in $\CCP$ is $(-1)^n \mu(\LL)$ (in particular, it doesn't depend on $\rho$ as soon as $\rho$ is generic).  These facets are thus natural candidates to look for a combinatorial interpretation of the coefficients $c_X$ defined at the end of Section~\ref{sec:homo}.  The previous proposition will also be used in Section~\ref{sec:vectors} to take into account all faces of $\CCP$, not just facets.

%\begin{prop}
%    Suppose that $\rho$ and $\rho'$ are two generic vectors that can be joined by a path that cross only one hyperplane $\alpha^\perp$.  Let $\CCP(\rho)$ and $\CCP(\rho')$ be the corresponding positive chamber complex. 
%\end{prop}

%\begin{coro}
%    If the long element of $W$ is central, the complex $CCP$ doesn't depend on $\rho$ up to isomorphism. NO, NOT TRUE.
%\end{coro}

\begin{prop} \label{prop:orderideal}
    If $\rho\in F$, the set
    \begin{align} \label{eq:setWCCP}
        \big\{ w \in W \;:\; w(\overline{F}) \in \CCP \big\}
    \end{align}
    is an order ideal in $W$ for the right weak order.
\end{prop}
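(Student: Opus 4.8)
The plan is to translate ``$w(\overline F)\in\CCP$'' into a condition on the extreme rays of the chamber $w(\overline F)$, and then to check the order-ideal property one cover relation at a time in the right weak order. Concretely, $\overline F$ is a simplicial cone, so I would write $\overline F=\sum_{i=1}^n\mathbb R_{\ge 0}\varpi_i$, where $\varpi_i$ generates the extreme ray of $\overline F$ lying on all the walls $\alpha_j^\perp$ with $j\ne i$; thus $\langle\alpha_j,\varpi_i\rangle=0$ for $j\ne i$ and $\langle\alpha_i,\varpi_i\rangle>0$. For $w\in W$ the chamber $w(\overline F)$ is the cone on the $w(\varpi_i)$, and since $H^+$ is a half-space through the origin (in particular a convex cone) one gets
\[
  w(\overline F)\in\CCP\iff w(\overline F)\subseteq H^+\iff \langle w(\varpi_i),\rho\rangle\ge 0\ \text{ for all }i\in[n].
\]
Write $S$ for the set in~\eqref{eq:setWCCP}. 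Because the right weak order is graded by the Coxeter length, every relation $v\le_R w$ is refined by a saturated chain, so it suffices to show that $S$ is downward-closed under cover relations: if $v\lessdot_R w$ and $w\in S$, then $v\in S$.

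For such a cover I would write $w=vs_k$ with $\ell(w)=\ell(v)+1$ and $\alpha_k$ the simple root of $s_k$; then $\beta:=v(\alpha_k)$ is a positive root (the standard fact $\ell(vs_k)>\ell(v)\iff v(\alpha_k)\in\Phi^+$). Since $s_k$ fixes $\alpha_j^\perp$ pointwise for $j\ne k$, it fixes $\varpi_j$ for $j\ne k$, while $s_k(\varpi_k)=\varpi_k-c\,\alpha_k$ with $c=2\langle\varpi_k,\alpha_k\rangle/\langle\alpha_k,\alpha_k\rangle>0$ (equivalently: $\varpi_k-s_k(\varpi_k)$ is parallel to $\alpha_k$ and points to the positive side of $\alpha_k^\perp$). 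Applying $v$, the chambers $v(\overline F)$ and $w(\overline F)=v(s_k(\overline F))$ share the extreme rays $v(\varpi_j)$, $j\ne k$, while the remaining extreme ray is $v(\varpi_k)$ for $v(\overline F)$ and $v(\varpi_k)-c\,\beta$ for $w(\overline F)$. From $w\in S$ I get $\langle v(\varpi_j),\rho\rangle\ge 0$ for $j\ne k$ and $\langle v(\varpi_k),\rho\rangle\ge c\,\langle\beta,\rho\rangle$. This is the only place the hypothesis $\rho\in F$ enters: a positive root is strictly positive on $F$, so $\langle\beta,\rho\rangle>0$, whence $\langle v(\varpi_k),\rho\rangle>0$. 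Therefore $\langle v(\varpi_i),\rho\rangle\ge 0$ for every $i$, i.e. $v(\overline F)\subseteq H^+$ and $v\in S$, as desired.

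There is no serious technical obstacle here; the one thing to get right is conceptual, namely not to try to propagate the condition along minimal galleries issuing from $F$. That would be the naive approach, and it fails precisely because of the non-convexity of $\CCP$ discussed just before the proposition (a minimal gallery between two chambers of $\CCP$ need not stay in $\CCP$). The working substitute is the ``all extreme rays on the positive side'' criterion, under which a single cover relation modifies exactly one extreme ray; the short computation identifying that ray as $v(\varpi_k)-c\,\beta$ with $c>0$ and $\beta=v(\alpha_k)$ a positive root, together with $\langle\beta,\rho\rangle>0$, is exactly what forces $v(\varpi_k)$ itself to lie in $H^+$. I would also note that genericity of $\rho$ is not used anywhere in this argument.
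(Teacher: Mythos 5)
Your argument is correct and mirrors the paper's proof: both reduce to cover relations in the right weak order, identify the single extreme ray of the chamber that changes under such a cover, write the new ray as the old plus a positive multiple of the positive root $\beta=v(\alpha_k)$, and use $\rho\in F$ (so $\langle\beta,\rho\rangle>0$) to conclude; the paper phrases the same computation via the reflection $t=wsw^{-1}$ and a choice of normal $\beta$ to $\Fix(t)$ on the $\rho$-side, but the arithmetic is identical. One small misstatement to fix: $s_k$ fixes $\alpha_k^\perp$ pointwise, not $\alpha_j^\perp$ for $j\ne k$; your claim $s_k(\varpi_j)=\varpi_j$ for $j\ne k$ is nevertheless correct because $\varpi_j\in\alpha_k^\perp$.
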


\begin{proof}
    By definition, a cover relation in the right weak order is $ws \lessdot w$ for $w\in W$ and $s\in S$ such that
    $\ell(ws) < \ell(w)$ (where $\ell$ is the Coxeter length).  Assuming $w(\overline{F}) \in \CCP$, we have to show $ws(\overline{F}) \in \CCP$.

    Let $\alpha_1,\dots,\alpha_n$ be vectors that span the $n$ rays of $w(\overline{F})$, indexed so that the positive span of $\alpha_2,\dots,\alpha_n$ is the common panel of $w(\overline{F})$ and $ws(\overline{F})$.  The $n$ rays of $ws(\overline{F})$ are $\alpha_2,\dots,\alpha_n$ and $t(\alpha_1)$, where $t$ is the unique reflection in $W$ that fixes $\alpha_2,\dots,\alpha_n$, {\it i.e.}, $t = ws w^{-1}$.  Eventually, let $\beta \in V$ such that $t(\beta) = -\beta$ and $\langle \rho | \beta \rangle > 0$.
    
    The condition $ws \lessdot w$ means that $\overline{F}$ and $ws(\overline{F})$ are on the same side of the hyperplane $\Fix(t)$.  So, $\langle t(\alpha_1) | \beta \rangle$ has the same sign as $\langle \rho | \beta \rangle$, {\it i.e.},  $\langle t(\alpha_1) | \beta \rangle >0$.   We thus have $\langle \alpha_1 | \beta \rangle <0$.

    We have:
    \[
        t(\alpha_1)
            =
        \alpha_1 - 2 \frac{\langle\alpha_1|\beta\rangle}{\langle\beta|\beta\rangle} \beta.
    \]
    So, 
    \[
        \langle t(\alpha_1) |\rho \rangle
            =
        \langle\alpha_1|\rho\rangle - 2 \frac{\langle\alpha_1|\beta\rangle}{\langle\beta|\beta\rangle} \langle\beta|\rho\rangle.
    \]
    The first term is positive because $\alpha_1$ spans a ray of $w(\overline{F})$ which is in $\CCP$. We have seen that $\langle\alpha_1|\beta\rangle <0$ and $\langle\beta|\rho\rangle >0$.  We get $\langle t(\alpha_1) |\rho \rangle >0$ from the previous equation.  The $n$ rays of $ws(\overline{F})$ are thus in $\CCP$.  It follows that $ws(\overline{F}) \in \CCP$ as required.
\end{proof}

\begin{theo} \label{theo:shelling}
    The complex $\CCP$ is shellable. 
\end{theo}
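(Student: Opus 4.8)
The plan is to lift the classical shelling of the whole Coxeter complex $\CC$ to the subcomplex $\CCP$, using Proposition~\ref{prop:orderideal}. Recall that $\CC$ is pure of dimension $n-1$, with facets the chambers $w(\overline{F})$, $w\in W$, and recall (or check directly) the classical fact that ordering these chambers along any linear extension of the right weak order yields a shelling of $\CC$ (see \cite{bjornerbrenti} and the references therein). Let us record the restriction faces. Let $r_1,\dots,r_n$ be the rays of $\overline{F}$ and $P_i$ the panel of $\overline{F}$ opposite to $r_i$ (so $P_i$ is spanned by $\{r_j:j\ne i\}$ and lies in the wall fixed by $s_i$); then crossing $w(P_i)$ from $w(\overline{F})$ leads to $ws_i(\overline{F})$, and $ws_i<w$ in the right weak order precisely when $s_i\in\des_R(w):=\{s\in S:\ell(ws)<\ell(w)\}$. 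Consequently, in the shelling the intersection of $w(\overline{F})$ with the union of the earlier chambers is $\bigcup_{s_i\in\des_R(w)}w(\overline{P_i})$, and the corresponding restriction (minimal new) face is $\langle w(r_i):s_i\in\des_R(w)\rangle$, of dimension $|\des_R(w)|$.

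Set $I:=\{w\in W:w(\overline{F})\in\CCP\}$. By Proposition~\ref{prop:orderideal} this is an order ideal for the right weak order, and it is nonempty since $\overline{F}\subseteq H^+$ (the fundamental weights of $W$ have pairwise nonnegative inner products). Choose a linear extension of the right weak order in which the elements of $I$ come first; the induced order on the chambers of $\CC$ is a shelling, and $\{w(\overline{F}):w\in I\}$ is an initial segment of it. An initial segment of a shelling is itself a shelling of the subcomplex it generates, so the proof reduces to showing that this subcomplex is all of $\CCP$; equivalently, that $\CCP$ is pure of dimension $n-1$: \emph{every face of $\CCP$ lies in some positive chamber}.

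To prove this, fix a face $f\in\CCP$, so $f\subseteq H^+$; by genericity of $\rho$ this forces $\langle\rho\,|\,r\rangle>0$ for every extreme ray $r$ of $f$. Let $W_f:=\{w\in W:f\subseteq\Fix(w)\}$, a parabolic subgroup with $\Fix(W_f)=\Span(f)$, set $U:=\Span(f)^{\perp}$, and let $\mathcal{A}$ be the reflection arrangement of $W_f$ acting on $U$. By standard properties of the Coxeter complex (see \cite{bjornerbrenti}), the faces $g$ of $\CC$ having $f$ as a face are exactly the cones $\overline{f}+\overline{g'}$, where $g'$ ranges over the faces of the fan of $\mathcal{A}$ in $U$ (a Minkowski sum of cones lying in the complementary orthogonal subspaces $\Span(f)$ and $U$); moreover the rays of $\mathcal{A}$ are rays of $\CC$. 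For $a\in\overline{f}$ and $b\in U$ one has $\langle a+b\,|\,\rho\rangle=\langle a\,|\,\rho\rangle+\langle b\,|\,\rho_U\rangle$ with $\langle a\,|\,\rho\rangle\geq0$, where $\rho_U$ is the orthogonal projection of $\rho$ onto $U$; hence a chamber $\overline{f}+\overline{C'}$ of $\CC$ lies in $H^+$ if and only if $\overline{C'}\subseteq\{y\in U:\langle y\,|\,\rho_U\rangle\geq0\}$. If $\rho_U=0$, any chamber $C'$ of $\mathcal{A}$ works. Otherwise, since $\rho$ is orthogonal to no ray of $\CC$ and the rays of $\mathcal{A}$ are among those, $\rho_U^{\perp}\cap U$ is a generic hyperplane for $\mathcal{A}$, and Proposition~\ref{prop:greenezaslawsky} provides a chamber $C'$ of $\mathcal{A}$ contained in $\{y\in U:\langle y\,|\,\rho_U\rangle\geq0\}$. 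In either case $\overline{f}+\overline{C'}$ is a positive chamber having $f$ as a face, which proves the purity. (This also follows from \cite{bastidashohlwegsaliola}, where $\CCP$ is shown to be a ball.)

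Putting the pieces together: $\{w(\overline{F}):w\in I\}$ is exactly the set of facets of $\CCP$, and the chosen order on it — an initial segment of a shelling of $\CC$ — is a shelling of $\CCP$, with restriction faces $\langle w(r_i):s_i\in\des_R(w)\rangle$ for $w\in I$. (One can also verify the shelling condition on the nose: for $w\in I$ with $\des_R(w)\ne\varnothing$ and $s_i\in\des_R(w)$ one has $ws_i<w$, hence $ws_i\in I$ by the ideal property and $ws_i$ precedes $w$, so $w(\overline{F})$ meets the earlier facets of $\CCP$ in $\bigcup_{s_i\in\des_R(w)}w(\overline{P_i})$, a nonempty pure $(n-2)$-dimensional subcomplex of its boundary.) The main obstacle is the purity step — in particular, identifying the faces of $\CC$ above $f$ with the $W_f$-arrangement on $\Span(f)^{\perp}$ and observing that its rays are rays of $\CC$, which is exactly what keeps $\rho_U^{\perp}\cap U$ generic so that Greene–Zaslavsky applies. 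Everything else is a routine combination of the classical weak-order shelling of $\CC$ with Proposition~\ref{prop:orderideal}.
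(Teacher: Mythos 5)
Your overall strategy coincides with the paper's: restrict Bj\"orner's shelling of $\CC$ along a linear extension of the right weak order to the prefix given by Proposition~\ref{prop:orderideal}, and use the fact that an initial segment of a shelling order shells the subcomplex it generates. You are also right that this leaves a purity claim to check --- that every face of $\CCP$ lies in a positive chamber, so that the subcomplex generated by the positive chambers is all of $\CCP$ --- a point the paper's proof treats as immediate.

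However, the argument you give for purity does not hold up. You assert that the cones of $\CC$ containing a fixed face $f$ are exactly the Minkowski sums $\overline{f}+\overline{g'}$ with $g'$ a cone of the $W_f$-arrangement in $U=\Span(f)^{\perp}$, and that the rays of that arrangement are rays of $\CC$. Both claims fail already in type $A_2$ with $f=\mathbb{R}_{\ge0}\,\omega_1$. Since $\langle\omega_1\,|\,\omega_2\rangle>0$, one has $\omega_2\notin U$ and $\overline{F}\cap U=\{0\}$, so $\overline{F}=\Span_{\ge0}\{\omega_1,\omega_2\}$ is not of the form $\overline{f}+\overline{g'}$ for any cone $g'\subseteq U$ (the Minkowski sum of $\overline{f}$ with the orthogonal projection of $\mathbb{R}_{\ge0}\,\omega_2$ onto $U$ is strictly larger than $\overline F$); and the one line of the $W_f$-arrangement in $U$ points in the $\alpha_2$ direction, which is not a ray of $\CC$. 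The link of $f$ in $\CC$ is abstractly isomorphic to the Coxeter complex of $W_f$, but it is not realized as an orthogonal factor inside the star of $f$. Consequently the inner-product decomposition you write down is being applied to cones that are not the chambers of $\CC$ through $f$, and the genericity of $\rho_U^{\perp}\cap U$ needed to invoke Proposition~\ref{prop:greenezaslawsky} is derived from the false assertion about rays. Your fallback remark --- that purity follows from $\CCP$ being a ball as shown in \cite{bastidashohlwegsaliola} --- is sound and is presumably what the paper relies on implicitly; but the direct Minkowski-sum argument you offer in its place is incorrect as written.
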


\begin{proof}
    Via the action of $W$, we can assume $\rho\in F$ without changing the combinatorial structure of $\CCP$. So we can use Proposition~\ref{prop:orderideal}.  

    Let $w_1,w_2,\dots$ be a linear extension of the right weak order on $W$. By~\cite[Theorem~2.1]{bjorner}, the sequence of facets $w_1(\overline{F}), w_2(\overline{F}), \dots$ is a shelling of $\CC$.
    
    The linear extension can be chosen so that the elements $w$ satisfying $w(\overline{F}) \in \CCP$ form a prefix, since these elements form an order ideal (by Proposition~\ref{prop:orderideal}).  By definition, a prefix of a shelling order is a shelling order of the corresponding subcomplex.  So $w_1(\overline{F}), w_2(\overline{F}), \dots$ (where we keep only the facets of $\CCP$) is a shelling of $\CCP$.
\end{proof}

\begin{theo} \label{theo:ball}
    The geometric realization of $\CCP$ is homeomorphic to a $(n-1)$-dimensional ball. 
\end{theo}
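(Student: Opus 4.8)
The plan is to deduce the statement from the shelling produced in Theorem~\ref{theo:shelling}, by invoking the classical dichotomy of Danaraj and Klee: a shellable pseudomanifold is PL-homeomorphic to a sphere when its boundary is empty and to a ball when its boundary is nonempty (we refer to~\cite{wachs} for background on shellability). So, after recalling this criterion, the work is reduced to checking that $\CCP$ is an $(n-1)$-dimensional pseudomanifold with nonempty boundary.

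I would first record the easy structural facts. The complex $\CCP$ is pure of dimension $n-1$: its facets are exactly the chambers of $\CC$ that it contains, and it contains at least one, for instance the fundamental chamber $\overline F$ when $\rho\in F$ --- indeed $\overline F$ lies in its own dual cone, the cone spanned by the positive roots (the inverse Cartan matrix being nonnegative), so that $\langle\alpha | \rho\rangle\geq 0$ for every $\alpha\in\overline F$. Strong connectedness of $\CCP$, i.e.\ connectedness of the graph on the chambers of $\CCP$ with an edge whenever two of them share a panel, is automatic from the existence of a shelling. And every panel of $\CCP$ lies in at most two chambers of $\CCP$: the geometric realization of the whole Coxeter complex $\CC$ is the $(n-1)$-sphere, hence a manifold, so each panel of $\CC$ lies in exactly two chambers of $\CC$, and a panel of $\CCP$ is in particular a panel of $\CC$. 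This already exhibits $\CCP$ as a pseudomanifold, closed or with boundary.

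It then remains to produce a panel of $\CCP$ that lies in only one chamber of $\CCP$. I would use that $\CCP\neq\CC$: since $\rho\in F$, neither $\rho$ nor $-\rho$ lies on a reflecting hyperplane, so $-\rho$ lies in the interior of some chamber $C'$, and $\langle-\rho | \rho\rangle<0$ forces $C'\not\subset H^+$, i.e.\ $C'\notin\CCP$. The chamber-adjacency graph of $\CC$ being connected, there is a chamber $C\in\CCP$ adjacent in $\CC$ to a chamber $D\notin\CCP$; their common panel $P$ satisfies $P\subset C\subset H^+$, so $P\in\CCP$, whereas the only chambers of $\CC$ containing $P$ are $C$ and $D$, hence the only chamber of $\CCP$ containing $P$ is $C$. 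So $P$ is a boundary panel, the boundary of $\CCP$ is nonempty, and the Danaraj--Klee criterion gives that the geometric realization of $\CCP$ is PL-homeomorphic --- in particular homeomorphic --- to an $(n-1)$-dimensional ball.

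The argument is essentially routine once the topological black box is in place; the point that deserves the most care is to invoke the Danaraj--Klee dichotomy in its correct generality --- it concerns pseudomanifolds, not merely combinatorial manifolds --- and to match $\CCP$ against its exact hypotheses: purity, strong connectedness, at most two chambers per panel, and at least one panel contained in a single chamber. The only non-formal ingredient is the inclusion of the fundamental chamber in the cone spanned by the positive roots, which serves only to certify that $\CCP$ is nonempty; everything else follows from the structure of $\CC$ as a thin simplicial complex triangulating a sphere, together with Theorem~\ref{theo:shelling}.
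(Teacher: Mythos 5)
Your proof is essentially the paper's proof: invoke Theorem~\ref{theo:shelling} to get a shelling, verify $\CCP$ is a pseudomanifold by combining strong connectedness (from the shelling) with the fact that panels of $\CC$ lie in exactly two chambers, and conclude by the Danaraj--Klee criterion. The one place you go further than the paper is in explicitly producing a boundary panel (by locating a chamber outside $\CCP$ near $-\rho$ and walking to an adjacent pair straddling $H$); the paper merely asserts $\CCP$ is a pseudomanifold with boundary, leaving nonemptiness of the boundary implicit. Your extra care there, and the remark certifying $\overline F\in\CCP$ via the nonnegativity of the inverse Cartan matrix, are both correct and tighten the exposition slightly, but the structure and the key invocation are identical.
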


\begin{proof}
%    We identify the geometric realization $|\CCP|$ of $\CCP$ as a subset of the unit sphere in $V$.  For every $\alpha\in |\CCP|$, $\alpha\neq -\rho$ (obviously) so there is a unique geodesic path $(\gamma_t)_{0\leq t \leq 1}$ in the unit sphere from $\alpha$ to  $\rho$. Since $\rho\in F$, we have $\rho\notin\Span(f)$ if $f\in\CC$ and $\dim(f)<n-1$.  It follows that $\gamma_t$ does not lie in the boundary of $|\CCP|$ if $t>0$.  It is straightforward to complete the proof.
    As a consequence of shellability, any pair of chambers in $\CCP$ can be joined by a gallery that stays in $\CCP$.  Moreover, each $(n-2)$-dimensional face of $\CCP$ belongs to either 1 or 2 chambers of $\CCP$ (because this holds in $\CC$).  These conditions imply that $\CCP$ is a pseudomanifold with boundary.

    By Danaraj and Klee~\cite{danarajklee}, a shellable pseudo-manifold with boundary is PL-homeomorphic to a closed ball.  The result follows.  (Alternatively, we could see the stereographic projection of the geometric realization $\CCP$ as a star-convex with respect to a point in the fundamental chamber.)
\end{proof}

\section{\texorpdfstring{The $f$- and $h$-vectors}{The f- and h-vectors}}
\label{sec:vectors}

The $f$-vector of an $(n-1)$-dimensional simplicial complex is $(f_i)_{-1\leq i \leq n-1}$, where $f_i$ is the number of $i$-dimensional faces.  Its $h$-vector $(h_i)_{0\leq i \leq n}$ is then defined via the relation:
\begin{align} \label{eq:rel_fh}
    \sum_{i=0}^n f_{i-1} z^i
        =
    \sum_{i=0}^n h_i z^i(1+z)^{n-i}.
\end{align}
This $h$-vector is known to encode important information about the simplicial complex. A classical example states that the $h$-vector of the Coxeter complex $\CC$ are the coefficients of the $W$-Eulerian polynomial $\sum_{w\in W} z^{\operatorname{des}(w)}$.

Let $S$ be the set of simple reflections of $W$.  Define a ``coloring'' map from the vertices of $\CC$ to $S$ by $w W_{(s)} \mapsto s$ (where $W_{(s)}$ is the subgroup generated by $S\backslash\{s\}$). This shows that $\CC$ is a {\it balanced} complex, which means that the $n$ vertices of any facet are colored by the $n$ elements of $S$.  See~\cite{bjorner}.  The $h$-vector of a balanced complex can be refined via the notion of {\it flag $h$-vector}.  In the present context, the positive chamber complex $\CCP$ is again a balanced complex.  But the natural refinement of the $f$- and $h$- vectors is obtained using $\Theta$ rather than subsets of $S$. Indeed, each subset $I\subset S$ naturally gives an element of $X\in \Theta$ by the condition that $W_X$ is conjugate to the standard parabolic subgroup $W_I$ (generated by $I$).  The corresponding notion of $f$-vector and $h$-vector are what we called ``almost-colored'' above, and they can be seen as the coefficients of some element in the parabolic Burnside ring.  

The next theorem says that $\bxi$ is the (suitably signed) almost-colored $f$-vector of $\CCP$.  %Note that the right-hand side of~\eqref{eq:xi_as_fvec} goes to the left-hand side of~\eqref{eq:rel_fh} under the linear map $\bphi_X \mapsto (-z)^{\dim(X)}$.

\begin{theo} \label{theo:xi_fvec}
    We have:
    \begin{align} \label{eq:xi_as_fvec}
        \bxi = (-1)^n \sum_{f \in \CCP} (-1)^{\dim(f)} \cdot \bphi_{\Span(f)}.
    \end{align}
\end{theo}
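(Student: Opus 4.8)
The plan is to match \eqref{eq:xi_as_fvec} against the formula $\bxi=(-1)^n\sum_{X\in\LL}\mu(X)\cdot\bphi_X$ of Theorem~\ref{theo2}. Since $\bphi_{\Span(f)}$ depends only on the $W$-orbit of $\Span(f)$, and since for $f\in\CC$ the subspace $\Span(f)$ lies in $\LL$ with $\dim(f)=\dim(\Span(f))$, grouping the faces of $\CCP$ according to the flat $Z:=\Span(f)$ shows that \eqref{eq:xi_as_fvec} is equivalent to the purely numerical assertion
\[
\#\{\, f\in\CCP \;:\; \Span(f)=Z \,\}=(-1)^{\dim Z}\mu(Z)\qquad\text{for every }Z\in\LL .
\]
The right-hand side is a nonnegative integer: the interval $[Z,\{0\}]$ in the geometric lattice $\LL$ has rank $\dim Z$, so $\mu(Z)=\mu(Z,\{0\})$ has sign $(-1)^{\dim Z}$ and the target value is $|\mu(Z)|$.

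For the left-hand side I would reinterpret the faces in question geometrically. Given $Z\in\LL$, consider the restricted arrangement $\mathcal{A}_Z:=\{\,\Fix(t)\cap Z : t\in T,\ Z\not\subseteq\Fix(t)\,\}$ in the ambient space $Z$. The faces $f\in\CC$ with $\Span(f)=Z$ are exactly the chambers of $\mathcal{A}_Z$: a point $p\in Z$ generic with respect to $\mathcal{A}_Z$ lies on precisely those reflecting hyperplanes that contain $Z$, whose intersection is $Z$ because $\LL$ is geometric, so the face of $\CC$ through $p$ has span $Z$ and, inside $Z$, is cut out by the hyperplanes of $\mathcal{A}_Z$; conversely every face of $\CC$ spanning $Z$ arises this way. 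As $f\subset H^+$ is equivalent to $f\subset H^+\cap Z$, the set $\{\,f\in\CCP : \Span(f)=Z\,\}$ is exactly the set of chambers of $\mathcal{A}_Z$ contained in the closed half-space $H^+\cap Z$ of $Z$.

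It then remains to count these chambers via Greene--Zaslavsky (Proposition~\ref{prop:greenezaslawsky}) applied inside $Z$. For this one checks that $H\cap Z$ is a generic hyperplane of $\mathcal{A}_Z$: writing $\rho_Z$ for the orthogonal projection of $\rho$ onto $Z$, we have $H\cap Z=\rho_Z^{\perp}\cap Z$, and every ray $r$ of the fan of $\mathcal{A}_Z$ is a ray of $\CC$ contained in $Z$, so $\langle r,\rho_Z\rangle=\langle r,\rho\rangle\neq0$ since $\rho$ is generic; in particular $\rho_Z\neq0$ when $\dim Z\geq1$ (the arrangement $\mathcal{A}_Z$ is essential in $Z$ and hence has rays), while the case $Z=\{0\}$ is trivial. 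Proposition~\ref{prop:greenezaslawsky} then gives that the number of chambers of $\mathcal{A}_Z$ in $H^+\cap Z$ equals $|\mu(L)|$, where $L$ is the intersection lattice of $\mathcal{A}_Z$. Finally, $L$ is canonically isomorphic to the interval $[Z,\{0\}]$ of $\LL$: each flat of $\mathcal{A}_Z$ is of the form $Z\cap Y$ with $Y\in\LL$, hence is itself a flat of $\LL$ contained in $Z$; and conversely each $Y\in\LL$ with $Y\subseteq Z$ is recovered as the intersection over $Z$ of the hyperplanes of $\mathcal{A}_Z$ above it, the reflecting hyperplanes that contain $Y$ and also contain $Z$ contributing nothing to the intersection with $Z$. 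Therefore $|\mu(L)|=|\mu_\LL(Z,\{0\})|=|\mu(Z)|$, which, combined with the first paragraph, establishes the displayed identity and hence the theorem.

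I expect no serious obstacle here: the only content beyond bookkeeping is Greene--Zaslavsky, already available as Proposition~\ref{prop:greenezaslawsky}, together with the Whitney-homology formula of Theorem~\ref{theo2}. The two places that need a little care are the identification of faces of $\CC$ spanning a fixed flat with chambers of $\mathcal{A}_Z$, and the identification of the intersection lattice of $\mathcal{A}_Z$ with an interval of $\LL$; both are standard facts about restrictions of reflection arrangements. One should also note that the argument uses only genericity of $\rho$, not the hypothesis $\rho\in F$.
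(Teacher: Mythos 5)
Your proof is correct and follows essentially the same approach as the paper: both reduce the identity to Theorem~\ref{theo2} and then count, for each flat $Z\in\LL$, the faces of $\CCP$ with span $Z$ by applying Greene--Zaslavsky to the restricted arrangement $\mathcal{A}_Z$ inside $Z$, using that its intersection lattice is the interval $[Z,\{0\}]$. The only difference is that you explicitly verify that $H\cap Z$ is generic for $\mathcal{A}_Z$ (via the projection $\rho_Z$ and the fact that rays of $\mathcal{A}_Z$ are rays of $\CC$), a point the paper asserts without spelling out.
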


\begin{proof}
    Let $\mathcal{A}$ denote the Coxeter arrangement $\{ \Fix(t) \;:\; t\in T\}$ in $V$. Let $X \in \LL$.  We count the number of $f \in \CCP$ with $\Span(f)=X$. To do this, consider the {\it restricted arrangement}:
    \[
        \mathcal{A}^X := \big\{
            X \cap Y \;:\; Y\in \mathcal{A}, \; \dim(X \cap Y) = \dim(X)-1
        \big\}.
    \]
    This is a hyperplane arrangement in $X$.  Its intersection lattice is the interval $[X,\{0\}]$ in $\LL$, so its total Möbius invariant is $\mu(X)$ (where $\mu$ is the Möbius function of $\LL$ as before). Moreover, the regions of $\mathcal{A}^X$ are the faces $f\in \CC$ with $\Span(f)=X$.  By Proposition~\ref{prop:greenezaslawsky} (using $H^+\cap X$ as a generic half-space in $X$), we obtain:
    \[
        \#\big\{ f \in \CCP \;:\; \Span(f)=X
        \big\}
            =
        |\mu(X)|
            =
        (-1)^{\dim(X)} \mu(X).
    \]
    It follows that the right-hand side of~\eqref{eq:xi_as_fvec} is equal to the right-hand side of~\eqref{eq:formulaxi1}.
\end{proof}

The next step is to identify $\bxi \otimes \bepsilon$ with the corresponding almost-colored $h$-vector.  Let us first introduce some tools for dealing with tensoring by the sign character.

If $I\subset S$, let $\bphi_I := \Ind_{W_I}^W (\triv)$.  This notation is compatible with the definition of $\bphi_X$ in~\eqref{eq:def_phi}, because the lattice of subsets of $I$ naturally embeds in $\LL$ via
\begin{equation} \label{eq:mapI}
    I \mapsto \bigcap_{s\in I} \Fix(s).
\end{equation}
(In terms of parabolic subgroups, this is the inclusion of standard parabolic subgroups in all parabolic subgroups.)
The notation in terms of subsets of $S$ is convenient to state {\it Solomon's formula}~\cite{solomon}:
\begin{align} \label{eq:solomon}
    \bphi_I \otimes \bepsilon
        =
    \sum_{J\subset I} (-1)^{\#J} \bphi_J.
\end{align}
However, in the present context it is better to forget about subsets of $S$ and have a formulation in terms of $\CC$ and $\LL$.

\begin{lemm} \label{lemm:tensoringsign}
    Let $f,g\in\CC$, such that $f\subset g$ and $\dim(g)=n$ ($g$ is a chamber).  We have:
    \begin{align} \label{eq:tensoringsign}
        \bphi_{\Span(f)} \otimes \bepsilon
            =
        \sum_{\substack{f'\in \CC, \\ f\subset  f' \subset g}} (-1)^{n - \dim(f')} \bphi_{\Span(f')}.
    \end{align}
\end{lemm}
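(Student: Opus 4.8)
The plan is to reduce this lemma to Solomon's formula~\eqref{eq:solomon} by transporting everything via the action of $W$ to the fundamental chamber, and then recognizing both sides as sums indexed by subsets of $S$. First I would use the action of $W$: since $g$ is a chamber, we may write $g=w(\overline F)$, and applying $w^{-1}$ we may assume $g=\overline F$ (the characters $\bphi_{\Span(f')}$ only depend on the orbit of $\Span(f')$, and $\bepsilon$ is $W$-invariant, so the identity to be proved is $W$-equivariant). Now the faces $f'$ with $f\subset f'\subset\overline F$ are exactly the faces of $\overline F$ containing $f$. The faces of $\overline F$ are indexed by subsets $I\subset S$: the face $F_I$ corresponding to $I$ has $\Span(F_I)=\bigcap_{s\in I}\Fix(s)$, which is exactly the map~\eqref{eq:mapI}, and its stabilizer (pointwise) is the standard parabolic $W_I$, so $\bphi_{\Span(F_I)}=\bphi_I$. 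Moreover $\dim(F_I)=n-\#I$, and the containment $F_I\subset F_J$ of faces of $\overline F$ corresponds to $J\subset I$ (larger face $\leftrightarrow$ smaller subset).

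With this dictionary in hand, write $f=F_I$ for the appropriate $I\subset S$. The left-hand side of~\eqref{eq:tensoringsign} is then $\bphi_I\otimes\bepsilon$. On the right-hand side, the faces $f'$ with $F_I\subset f'\subset\overline F$ are exactly the $F_J$ with $J\subset I$, and for such a face $n-\dim(f')=n-(n-\#J)=\#J$. Hence the right-hand side becomes $\sum_{J\subset I}(-1)^{\#J}\bphi_J$, which is precisely the right-hand side of Solomon's formula~\eqref{eq:solomon}. So the lemma follows immediately once the combinatorial dictionary between faces of $\overline F$ and subsets of $S$ is set up correctly.

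The only genuinely delicate point — and the step I would be most careful about — is getting the direction of the correspondence between face inclusion and subset inclusion right, together with the dimension bookkeeping: a face of $\overline F$ cut out by requiring more of the walls (larger $I$) is \emph{lower}-dimensional and corresponds to a \emph{larger} parabolic subgroup $W_I$, so the partial orders are reversed, and one must check that $\dim(F_I)=n-\#I$ so that the exponent $n-\dim(f')$ matches $\#J$ in Solomon's formula. One should also note in passing that $\Span(F_I)$ as defined geometrically agrees with $\bigcap_{s\in I}\Fix(s)$; this is standard for the fundamental chamber of a reflection group, since $\overline F=\{v: \langle v\mid\alpha_s\rangle\geq 0 \text{ for all } s\in S\}$ and setting a subset of these inequalities to equalities gives exactly the face $F_I$ with $\Span(F_I)=\bigcap_{s\in I}\Fix(s)$. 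Everything else is a direct substitution into~\eqref{eq:solomon}, so no further obstacle is expected.
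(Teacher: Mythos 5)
Your proof is correct and follows essentially the same route as the paper's: reduce to the fundamental chamber by $W$-equivariance, identify faces of $\overline F$ with subsets of $S$ via the order-reversing map $I\mapsto g\cap\bigcap_{s\in I}\Fix(s)$, and observe that the identity becomes Solomon's formula~\eqref{eq:solomon}. The paper's proof is just a terser version of yours; the extra dimension and order-reversal bookkeeping you supply is exactly what the paper leaves implicit.
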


\begin{proof}
    Using the action of $W$, we can assume that $g$ is the fundamental chamber.  The subsets of $S$ identify with the faces of $g$ through the map 
    \[
        I \mapsto g \cap \bigg( \bigcap_{s\in I} \Fix(s) \bigg),
    \]
    and this map reverses the order relations. Equation~\eqref{eq:tensoringsign} becomes~\eqref{eq:solomon} through this identification.
\end{proof}

\begin{prop}\label{theo3}
    Let $f_1,f_2,\dots$ be a shelling of $\CCP$.
    The {\it type} $\tau(f_i)$ of a facet of $f_i \in \CCP$ is defined as follows: let $p_1,\dots,p_k$ be the panels of $f_i$ that are not panel of $f_j$ for some $j<i$, then
    \[
        \tau(f_i) 
            :=
        \Span\Big( \bigcap_{j=1}^k p_j \Big)
            \in \LL.
    \]
    We have: 
    \begin{equation}  \label{eq:xiotimeseps}
        \bxi \otimes \bepsilon  
            =
        \sum_{ f \in \CCP, \; \dim(f)=n }  \bphi_{\tau(f)}.
    \end{equation}
\end{prop}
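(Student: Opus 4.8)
The plan is to deduce \eqref{eq:xiotimeseps} from Theorem~\ref{theo:xi_fvec} by tensoring the $f$-vector identity \eqref{eq:xi_as_fvec} with $\bepsilon$ and reorganizing the resulting sum using the shelling. First I would apply $\otimes\,\bepsilon$ to \eqref{eq:xi_as_fvec}, obtaining
\[
    \bxi\otimes\bepsilon
        =
    (-1)^n\sum_{f\in\CCP}(-1)^{\dim(f)}\,\bphi_{\Span(f)}\otimes\bepsilon.
\]
The natural move is to expand each $\bphi_{\Span(f)}\otimes\bepsilon$ via Lemma~\ref{lemm:tensoringsign}; but that lemma requires a choice of chamber $g\supset f$, and a given face of $\CCP$ typically lies in several chambers of $\CCP$. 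So instead I would regroup the sum over $\CCP$ by first summing over the chambers (facets) $g\in\CCP$ and then over faces $f$ with $f\subset g$, correcting for overcounting. The shelling order $f_1,f_2,\dots$ of $\CCP$ gives exactly the right bookkeeping device: for a chamber $g=f_i$, let the ``new'' faces it contributes be those faces $f\subset f_i$ that are not contained in any earlier chamber $f_j$, $j<i$. Every face of $\CCP$ is ``new'' for exactly one chamber (the smallest-indexed chamber containing it), so
\[
    \bxi\otimes\bepsilon
        =
    (-1)^n\sum_{i}\ \sum_{\substack{f\subset f_i\\ f\text{ new for }f_i}} (-1)^{\dim(f)}\,\bphi_{\Span(f)}\otimes\bepsilon .
\]

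The heart of the argument, and the step I expect to be the main obstacle, is to show that for each chamber $f_i$ the inner sum collapses to a single term $\bphi_{\tau(f_i)}$ (up to the global sign). Here I would use the standard fact from shelling theory: if $p_1,\dots,p_k$ are the panels of $f_i$ that already appear among $f_1,\dots,f_{i-1}$ — equivalently, the ``old'' facets of the simplex $\bar f_i$ — then the set of faces of $f_i$ that are \emph{not} contained in any of $p_1,\dots,p_k$ is the interval of faces $f'$ with $\bigcap_{j} p_j \subset f' \subset f_i$ (this is the ``unique minimal new face'' of the shelling step, and its span is precisely $\tau(f_i)$ by definition in the statement). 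Combined with \eqref{eq:tensoringsign} applied with $g=f_i$, which says $\bphi_{\Span(\tau(f_i))}\otimes\bepsilon = \sum_{\tau(f_i)\subset f'\subset f_i}(-1)^{n-\dim(f')}\bphi_{\Span(f')}$, and after checking that ``new for $f_i$'' matches ``contains $\tau(f_i)$'', the signs $(-1)^n(-1)^{\dim(f)}$ turn into $(-1)^{n-\dim(f)}=(-1)^{n-\dim(f')}$ as in \eqref{eq:tensoringsign}. Thus the inner sum over $f$ new for $f_i$ equals exactly $\bphi_{\tau(f_i)}$, and summing over $i$ yields \eqref{eq:xiotimeseps}.

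Two points need care in carrying this out. First, the combinatorial lemma identifying the ``new faces'' of a shelling step as an interval of faces of the simplex being added is classical (it is essentially the definition of the restriction map of a shelling, cf.~\cite{wachs}), but I would state it cleanly and verify that the panels $p_1,\dots,p_k$ appearing in the definition of $\tau(f_i)$ are exactly the ``old'' panels, so that $\tau(f_i)=\Span$ of the minimal new face. Second, I must make sure the sum in \eqref{eq:xiotimeseps} really ranges over \emph{all} facets of $\CCP$ including $f_1$ (the fundamental chamber), for which no panel is old, $k=0$, the empty intersection is all of $f_1$, and $\tau(f_1)=\Span(f_1)=V$; this matches the single ``new face'' being $f_1$ itself and contributes $\bphi_V\otimes\bepsilon$ via $(-1)^n(-1)^n\bphi_V$, consistent with $\bphi_{\tau(f_1)}=\bphi_V$. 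Once these checks are in place, the identity follows by summing the per-chamber contributions.
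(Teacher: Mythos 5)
Your plan follows the paper's proof exactly: tensor the $f$-vector identity from Theorem~\ref{theo:xi_fvec} by $\bepsilon$, regroup the sum over faces of $\CCP$ by the shelling step at which each face first appears, identify the ``new'' faces at step $i$ with a boolean interval in the face lattice of $f_i$, and collapse each such interval to $\bphi_{\tau(f_i)}\otimes\bepsilon$ via Lemma~\ref{lemm:tensoringsign}. The decomposition, the key combinatorial fact, and the signs all match.

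There is, however, a labeling slip in the very step you flag as the main obstacle. You set $p_1,\dots,p_k$ to be the \emph{old} panels of $f_i$ (those already appearing among $f_1,\dots,f_{i-1}$) and then claim the new faces form the interval $\bigcap_j p_j\subset f'\subset f_i$. This is false as written: a face of a simplex avoids containment in every old panel precisely when it contains all vertices opposite to old panels, which is the same as containing the intersection of the \emph{new} panels, not the old ones. (Try the $2$-simplex $ABC$ with $AB$ old: the new faces are $\{C,BC,CA,ABC\}=[C,ABC]=[BC\cap CA, ABC]$, whereas $[\bigcap(\text{old}),ABC]=[AB,ABC]=\{AB,ABC\}$.) In the proposition and in the paper's proof the $p_j$ are the \emph{new} panels, and the interval is $[\bigcap(\text{new panels}), f_i]$, whose bottom element has span $\tau(f_i)$. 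Your exposition uses the opposite convention while simultaneously invoking ``$\tau(f_i)$ by definition in the statement'', so the two usages contradict each other; one also shouldn't write $\tau(f_i)\subset f'$ since $\tau(f_i)\in\LL$ is a subspace rather than a face of $\CC$ — the correct inclusion is $\bigcap_{j=1}^k p_j\subset f'\subset f_i$. With the convention corrected, your argument is precisely the paper's.
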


\begin{proof}
    Since $\bepsilon \otimes \bepsilon = \triv$, Equation~\eqref{eq:xiotimeseps} is equivalent to:
    \begin{equation} \label{eq:xiotimes}
        \bxi  
            =
        \sum_{ f \in \CCP,\; \dim(f)=n }  \bphi_{\tau(f)} \otimes \bepsilon.
    \end{equation}
    Let $\DD^{(j)}$ be the subcomplex of $\CCP$ having $f_1,\dots,f_j$ as facets.  Being a shelling means that the intersection of $\DD^{(j-1)}$ with $f_j$ is purely $(n-1)$-dimensional (it is a nonempty union of panels of $f_j$).  We will show that for $j\geq 1$, we have:
    \begin{equation} \label{eq:xiotimesrec}
        (-1)^n \sum_{f \in \DD^{(j)},\; f \notin \DD^{(j-1)}} (-1)^{\dim(f)} \cdot \bphi_{\Span(f)}
            =
        \bphi_{\tau(f_j)} \otimes \bepsilon
    \end{equation}
    (if $j=1$, the condition $f \notin \DD^{(j-1)}$ is omitted).  By summing over $j$, we get
    \begin{equation}
        (-1)^n \sum_{f \in \CCP} (-1)^{\dim(f)} \cdot \bphi_{\Span(f)}
        =
        \sum_{ f \in \CCP,\; \dim(f)=n }  \bphi_{\tau(f)} \otimes \bepsilon.
    \end{equation}
    This is equivalent to~\eqref{eq:xiotimes}, by Theorem~\ref{theo:xi_fvec}.  It remains only to prove~\eqref{eq:xiotimesrec}.
    
    For a fixed $j$, let $p_1,\dots,p_k$ be the panels of $f_j$ that aren't a panel of $f_i$ for $i<j$ ({\it i.e.}, $p_1,\dots,p_k \notin \mathcal{D}^{(j-1)}$), and let $p_{k+1},\dots,p_n$ be the other panels of $f_j$.  From the definition of a shelling, a face $f \in f_j$ is in $\mathcal{D}^{(j-1)}$ iff it is included in some panel among $p_{k+1},\dots,p_n$.  Since the faces of $f_j$ form a boolean lattice, it follows that the faces $f \in \DD^{(j)}$ such that $f \notin \DD^{(j-1)}$ are the faces of $f_j$ that can be written as an intersection $\cap_{i\in I} p_i$ where $I \subset \{1,\dots,k\}$.  These faces form the interval between $\cap_{i=1}^k p_i$ and $f_j$ in the face lattice of $f_j$, so that
    \begin{equation} \label{eq:xiotimesrec2}
        \sum_{f \in \DD^{(j)},\; f \notin \DD^{(j-1)}} (-1)^{\dim(f)} \cdot \bphi_{\Span(f)}
            =
        \sum_{ \cap_{i=1}^k p_i \subset f \subset f_j } (-1)^{\dim(f)} \cdot \bphi_{\Span(f)}.
    \end{equation}
    By Lemma~\ref{lemm:tensoringsign}, the latter expression is $ (-1)^n \bphi_{\tau(f_j)} \otimes \bepsilon$.  This proves~\eqref{eq:xiotimesrec}.
% shelling au sens usuel: le premier simplexe donne (1+z)^n , une homologie facette donne z^n.
% ici, (1+z)^n correspond à \epsilon, donc après \otimes \epsilon ça devient triv = \bphi_0
\end{proof}

We eventually arrive at the result announced in the introduction.  It is much more satisfying than the previous proposition, because it gives a formula for $\bxi\otimes\bepsilon$ that doesn't involve a shelling order.

The {\it ascents} and {\it descents} of $w \in W$ are two complementary subsets of $S$ defined, using the Coxeter length $\ell$, by:
\[
    \des(w) :=
    \{
        s \in S \;:\; \ell(ws) < \ell(w)    
    \},
    \qquad    
    \asc(w) :=
    \{
        s \in S \;:\; \ell(ws) > \ell(w)    
    \}.
\]
Recall that for $I\subset S$, we defined $\bphi_I := \Ind_{W_I}^W (\triv)$ (see the paragraph before Lemma~\ref{lemm:tensoringsign}).

\begin{theo} \label{theo4}
    If $\rho \in F$ (and $H$, $H^+$, $H^-$ are defined accordingly via~\eqref{def:H}), we have:
    \begin{align} \label{eq:theo_desc_asc}
        \bxi \otimes \bepsilon
        =
        \sum_{\substack{w\in W \\ w(F) \subset H^+}}
        \bphi_{\asc(w)}
        =
        \sum_{\substack{w\in W \\ w(F) \subset H^-}}
        \bphi_{\des(w)}.
    \end{align}
\end{theo}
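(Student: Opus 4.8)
The plan is to deduce Theorem~\ref{theo4} from Proposition~\ref{theo3} by exhibiting a particularly clean shelling of $\CCP$ — namely the one coming from the right weak order, which we already used to establish shellability in Theorem~\ref{theo:shelling} — and then computing the type $\tau(f)$ of each facet with respect to that shelling. Concretely, with $\rho\in F$ we may enumerate the facets of $\CCP$ as $w_1(\overline F), w_2(\overline F),\dots$ where $w_1,w_2,\dots$ is a linear extension of the right weak order restricted to the order ideal $\{w : w(\overline F)\in\CCP\}$ (Proposition~\ref{prop:orderideal}). The heart of the matter is the following claim: for a facet $w(\overline F)$ appearing in this shelling, the panels of $w(\overline F)$ that are ``new'' (not already a panel of some earlier facet) are exactly the panels $p$ separating $w(\overline F)$ from $ws(\overline F)$ with $s\in\asc(w)$ — equivalently, the walls crossed when passing to a weak-order-larger chamber. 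This is the standard fact that in the weak-order shelling of the Coxeter complex (Björner~\cite{bjorner}), the restriction face of $w(\overline F)$ is the face cut out by $\des(w)$, intersected here with the order ideal defining $\CCP$; one has to check that when $w(\overline F)\in\CCP$ and $s\in\asc(w)$, the neighbor $ws(\overline F)$ is \emph{also} in $\CCP$, so that the wall between them is genuinely shared and not ``new''. But this is precisely the content of the proof of Proposition~\ref{prop:orderideal} (run in the ascent direction): if $w(\overline F)\subset H^+$ and $\ell(ws)>\ell(w)$ then $ws(\overline F)\subset H^+$ as well, since $ws\lessdot$-covers... wait — rather, $w \lessdot ws$, and $ws(\overline F)$ lies in $\CCP$ because $\CCP$-membership is an up-set obstruction only in the down direction; I would instead argue directly that the ray $t(\alpha_1)$ of $ws(\overline F)$ still pairs positively with $\rho$ by the same computation as in Proposition~\ref{prop:orderideal} with the inequality on $\langle\alpha_1|\beta\rangle$ reversed, or simply note that $w(\overline F)$ and $ws(\overline F)$ share the panel $p$, and $p$ being a new panel of $w(\overline F)$ would force $ws(\overline F)\notin\CCP$, which would contradict... — the cleanest route is: a panel $p$ of $w(\overline F)$ is NOT new iff the other chamber containing $p$ is some $w_i(\overline F)$ with $i$ earlier, iff that other chamber is weak-order-smaller and lies in $\CCP$; the other chamber is $ws(\overline F)$ with $s\in S$, it is weak-order-smaller iff $s\in\des(w)$, and when $s\in\des(w)$ it automatically lies in $\CCP$ since $\CCP$'s defining set is an order ideal. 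Hence the new panels of $w(\overline F)$ are exactly those indexed by $\asc(w)$.

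Granting the claim, $\tau(w(\overline F))=\Span\big(\bigcap_{s\in\asc(w)} p_s\big)$ where $p_s$ is the wall between $w(\overline F)$ and $ws(\overline F)$. Now I compute this span: under the action of $w$ we reduce to $w=e$ only for intuition, but in general $\Span(\bigcap_{s\in\asc(w)} p_s)$ is the $W$-conjugate (by $w$) of the coordinate subspace $\bigcap_{s\in\asc(w)}\Fix(s)$, hence $\bphi_{\tau(w(\overline F))}=\bphi_{\asc(w)}$ by $W$-invariance of $\bphi$ (recall $\bphi_X=\bphi_{w(X)}$, and $\bphi_I$ was defined precisely so that $\bphi_I=\bphi_X$ when $W_X$ is conjugate to $W_I$). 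Plugging into~\eqref{eq:xiotimeseps} gives $\bxi\otimes\bepsilon=\sum_{w:\,w(F)\subset H^+}\bphi_{\asc(w)}$, the first equality of~\eqref{eq:theo_desc_asc}.

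For the second equality, I apply the antipodal map $\alpha\mapsto-\alpha$ on $V$, which sends $\CC$ to itself and $H^+$ to $H^-$, and sends the chamber $w(F)$ to $w w_0(F)$ where $w_0$ is the longest element. Since $\asc(ww_0)=\des(w)$ (because $w_0$ reverses left descents appropriately — more precisely $\ell(w w_0 s)=\ell(w w_0)\pm 1$ flips relative to $\ell(ws)$ for each $s\in S$, using $\ell(ww_0)=\ell(w_0)-\ell(w)$ and that $w_0 S w_0 = S$ only up to a diagram automorphism, so one must instead phrase this as: $w(F)\subset H^-$ iff $(-w)(F)=ww_0(F)$... ), the substitution $w\mapsto ww_0$ turns the first sum into $\sum_{w:\,ww_0(F)\subset H^+}\bphi_{\des(w)}$; and $ww_0(F)\subset H^+$ iff $w(F)\subset w_0(H^+)$. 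This last step requires identifying $w_0(H^+)$ with $H^-$, which holds because $w_0$ is the negative of an orthogonal transformation fixing $F$ only when $-1\in W$; in general the right statement is simply that $f\mapsto -f$ is the relevant involution, $\CCP(\rho)$ for the half-space $H^+$ becomes $\CCP(-\rho)$ for $H^-$, and $\des$ and $\asc$ swap under the induced relabeling of facets. The honest way to write it: the chambers with $w(F)\subset H^-$ are the antipodes of those with $w(F)\subset H^+$, the antipode of $w(F)$ is $-w(F)$, and if $-w(F)=w'(F)$ then $\asc(w)$ and $\des(w')$ correspond under $\Span$ since the new-panel structure is carried by the antipodal map. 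I expect the main obstacle to be pinning down this antipodal bookkeeping cleanly — in particular making sure the coloring/type of a facet and of its antipode are related by $\asc\leftrightarrow\des$ without getting tangled in whether $-1\in W$ — and I would handle it by working entirely with the geometric faces and their spans (using $\bphi$'s $W$-invariance) rather than with subsets of $S$, so that the statement $\Span(\bigcap_{s\in\asc(w)}p_s)$ for $w(F)$ matches $\Span(\bigcap_{s\in\des(w')}p'_s)$ for the antipode $w'(F)=-w(F)$ by applying $-\mathrm{id}$ to the defining intersection of walls.
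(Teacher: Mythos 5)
Your proposal is correct and follows essentially the same route as the paper: you use the weak-order shelling of $\CCP$ from Theorem~\ref{theo:shelling}, identify the ``new'' panels of a facet $w(\overline{F})$ with $\asc(w)$ (via the observation that for $s\in\des(w)$ the neighbor $ws(\overline{F})$ is automatically in $\CCP$ by the order-ideal property and precedes $w(\overline{F})$ in the shelling, while for $s\in\asc(w)$ it cannot precede), compute $\tau$ as the $W$-translate of the coordinate subspace $\bigcap_{s\in\asc(w)}\Fix(s)$, and feed this into Proposition~\ref{theo3}; the second equality is handled, as in the paper, by the antipodal symmetry $F\mapsto -F = w_0(F)$, with the diagram-automorphism wrinkle correctly absorbed by the $W$-invariance of $\bphi$. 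The exploratory detours (especially the temporary worry about whether $ws(\overline{F})\in\CCP$ when $s\in\asc(w)$, which turns out to be irrelevant) could be pruned, but the argument you ultimately settle on is sound and matches the paper's.
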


\begin{proof}
    %The two sums above are equal, because we can pair terms via the map $w\mapsto w w_\circ$ where $w_\circ$ is the long element of $W$.  It is well-known that $\des(w) = \asc(w w_\circ)$ and $\asc(w) = \des(w w_\circ)$.
    %the two sums can be paired if $w_\not = -I$, but in general ?
    We only prove the first equality. The other is deduced by taking $-\rho$ and $-F$ in place of $\rho$ and $F$.  (In the case where the long element $w_\circ \in W$ is central and thus equal to $-I$, we have $w_\circ(H)=H$.  The two sums are then equal, because we can pair elements on each side via the map $w\mapsto w w_\circ$.)
    
    It remains to prove the first equality.  Let $f_1, f_2,\dots $ be the shelling of $\CCP$ given by Theorem~\ref{theo:shelling}.  The idea is to use Proposition~\ref{theo3} with this shelling.

    Let $f_j$ be a facet of $\CCP$.  It corresponds to $w\in W$ such that $f_j = w(\overline{F}) \subset H^+$. Let $p$ be a panel of $f_j$.  There is $s \in S$ such that the two chambers adjacent to $p$ are $w(F)$ and $ws(F)$.  Note that
    \[
        \Span(p) = \Fix(wsw^{-1}).
    \]
    By definition, $s$ is a descent of $w$ iff $\ell(ws)<\ell(w)$, iff $ws(\overline{F})$ precedes $w(\overline{F})$ in the shelling order, iff $p$ is a panel of $f_i$ with $i<j$.  Let $p_1, \dots , p_k$ be the panels of $f_j$ that are not panel of $f_i$ for some $i<j$. We thus have
    \[
        \tau(f_j) 
            = 
        \Span\bigg( \bigcap_{i=1}^k p_k \bigg) 
            = 
        \bigcap_{i=1}^k \Span(p_k)
            =
        \bigcap_{s \in \asc(w)} \Fix(wsw^{-1})
            =
        w\bigg(\bigcap_{s \in \asc(w)} \Fix(s) \bigg).
    \]
    Recall that we identified subsets of $S$ to some elements in $\LL$ via~\eqref{eq:mapI}.  It follows $\bphi_{\tau(f_j)} = \bphi_{\asc(w)}$.  We thus obtain the result by reformulation of Proposition~\ref{theo3}.
\end{proof}

\begin{rema}[Equivariant $h$-vectors for $\bar\PP$]\label{rem:equiv_h}
For a Cohen-Macaulay poset like $\bar\PP$, the $h$-vector of its order complex $\Delta(\bar\PP)$ records information about the top homology dimensions of its type-selected subposets. If the poset comes with a group action (a $W$-action in our case), this can be generalized to a statement about representations \cite{stanley_grp}:
\[
h_i=\sum_{R\subset[n]\atop |R|=i}\dim\big(\tilde{H}_{|R|-1}(\PP_R)\big)\quad\quad\text{and}\quad\quad\mathbf{h_i}:=_W\bigoplus_{R\subset[n]\atop |R|=i}\tilde{H}_{|R|-1}(\PP_R),
\]
where the $\mathbf{h_i}$, defined as direct sums of $W$-modules $\tilde{H}_{|R|-1}(\PP_R)$, form what is known as the \emph{equivariant $h$-vector} of $\Delta(\bar\PP)$. The right hand sides of the two equations above are usually denoted $h_R$ and $\mathbf{h_R}$ and form the flag (and equivariant flag) $h$-vector of $\Delta(\bar\PP)$ (which is balanced since $\bar\PP$ is ranked). The entries $h_n=h_{[n]}$ and $\mathbf{h_n=h_{[n]}}$ are respectively the dimension and the homology character of the group action on the top homology of $\Delta(\bar\PP)$.

\noindent After Remark~\ref{rem:rank_sel_subposets}, we have
\begin{align}
    \mathbf{h_R}=(-1)^{|R|}\sum_{X\in\LL_R\cup\{0\}}\mu_R(X)\cdot\bphi_X.
\end{align}
The entries $h_i$ of the $h$-vector sum up to the number of maximal chains of the poset, which in our case equals $|W|\cdot \dfrac{|W|\cdot n!}{2^n}$ by \cite[Prop.~4.3]{chapuydouvropoulos}. However, no direct combinatorial interpretation is known for them. Moreover, a direct generalization of Theorem~\ref{theo4} does not hold. We calculate the equivariant $h$-vector for $A_3$ in Remark~\ref{rem:equiv_h2} and observe that it is not (alone or tensored by sign) a positive sum in the $\bphi$-basis.
\end{rema}

\section{The case of the symmetric group}
\label{sec:sym}

Let $\Pi_n$ denote the lattice of set partitions of $[n]$. The order is reverse refinement (the finest/coarsest set partition is the bottom/top element).  It is well-known that the intersection lattice for $\mathfrak{S}_n$ is isomorphic to $\Pi_n$.  The explicit construction is as follows.  The geometric representation is $V = \{ (v_1,\dots,v_n) \in \mathbb{R}^n \;:\; \sum v_i =0 \}$ where $\mathfrak{S}_n$ acts by permuting the $n$ coordinates:
\[
    \sigma \cdot (v_1,\dots,v_n) :=
    (v_{\sigma^{-1}(1)},\dots,v_{\sigma^{-1}(n)}).
\]
To $\pi\in \Pi_n$, we associate the subspace of $V$ defined by the equations $v_i=v_j$ for each pair $i,j$ of integers that are in the same block of $\pi$.  This is an isomorphism between $\Pi_n$ and the intersection lattice of $\mathfrak{S}_n$.

\begin{defi}
    A {\it uniform block permutation} (of size $n$) is a triple $(\pi_1,\pi_2,\lambda)$ where
    \begin{itemize}
        \item $\pi_1, \pi_2 \in \Pi_n$,
        \item $\lambda$ is a bijection from (the blocks of) $\pi_1$ to (those of) $\pi_2$,
        \item for each block $b\in \pi_1$, $\# b = \# \lambda(b)$.  
    \end{itemize}
    The order relation is given by $(\pi'_1,\pi'_2,\lambda') \geq (\pi_1,\pi_2,\lambda)$ if: i) $\pi'_1 \geq \pi_1$ and $\pi'_2 \geq \pi_2$ in $\Pi_n$, ii) $\lambda'$ is the natural quotient map obtained from $\lambda$.
\end{defi}

For example, a uniform block permutation of size $9$ is given by the pair of set partitions
\[
    (178|25|349|6, 237|14|569|8),
\]
where $\lambda$ is implicitly given by matching blocks with the same index: $178\mapsto 237$, $25 \mapsto 14$, {\it etc}. A smaller element is
\[
    (18|7|25|34|9|6, 
     37|2|14|69|5|8).
\]
Minimal elements naturally correspond to permutations of $\{1,\dots,n\}$ by 
\[
    \mathfrak{S}_n \owns \sigma 
        \mapsto
    (1|2|3|\dots |n, \sigma_1 | \sigma_2 | \sigma_3 | \dots |\sigma_n).
\]

The poset of uniform block permutations coincides with the type $A$ parabolic coset poset, and the correspondence is as follows.  The parabolic subgroups of  $\mathfrak{S}_n$ are indexed by $\Pi_n$ and are the products
\[
    \mathfrak{S}_\pi := \prod_{\beta \in \pi} \mathfrak{S}_{\beta}, \; (\pi\in \Pi_n),
\]
together with their natural action on $\{1,\dots,n\}$ ($\mathfrak{S}_{\beta}$ permutes the elements of $\beta$).  For each $\sigma \in \mathfrak{S}_n$ and $\pi \in \Pi_n$, the parabolic coset $\sigma \mathfrak{S}_\pi$ is associated to the uniform block permutation $(\pi,\sigma(\pi), \bar\sigma)$ where $\bar\sigma$ is the quotient map induced by $\sigma$.  

Recall that characters of $\mathfrak{S}_n$ are in correspondence with degree $n$ symmetric functions, via the Frobenius characteristic $\Fr$.  It is such 
that
\[
    \Fr( \bphi_\pi ) = H_\lambda, 
        \qquad
    \Fr( \bphi_\pi \otimes \bepsilon ) = E_\lambda,
\]
where $H_\lambda$ and $E_\lambda$ are respectively the homogeneous and elementary symmetric functions, and $\lambda$ is the (integer) partition of $n$ obtained by sorting block sizes of $\pi$.  We also use the involution $\omega$ defined by:
\[
    \omega( E_\lambda ) = H_\lambda, \qquad
    \omega( H_\lambda ) = E_\lambda,
\]
so that $\Fr(\bchi \otimes \bepsilon) = \omega( \Fr(\bchi) )$ for any character $\bchi$ of $\mathfrak{S}_n$.

It is convenient to write $H_\pi := H_\lambda$ and $E_\pi := E_\lambda$ (with $\pi$ and $\lambda$ as above).  More generally, we use the following:

\begin{nota}
    For any sequence $(X_i)_{i \geq 1}$ indexed by positive integers, and a set partition $\pi \in \Pi_n$, we denote
    \[
        X_\pi := \prod_{\beta \in \pi} X_{\#\beta}.
    \]
\end{nota}

\begin{lemm}[The moment-cumulant formula] \label{lemm:momentcumulant}
    Consider two formal power series
    \[
        M(x) = \sum_{n\geq 0} M_n \frac{x^n}{n!},
            \qquad
        K(x) = \sum_{n\geq 1} K_n \frac{x^n}{n!},
    \]
    with $M_0=1$.  The following conditions are equivalent:
    \begin{itemize}
        \item $K(x) = \log(M(x))$,
        \item $\forall n \geq 1, \; M_n = \sum_{\pi \in \Pi_n} K_\pi$,
        \item
        $\forall n \geq 1, \; K_n = \sum_{\pi \in \Pi_n} \mu(\pi) K_\pi$.
    \end{itemize}
\end{lemm}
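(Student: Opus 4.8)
The plan is to establish the equivalences (i)$\Leftrightarrow$(ii)$\Leftrightarrow$(iii), where (i) is $K(x)=\log M(x)$, (ii) is $M_n=\sum_{\pi\in\Pi_n}K_\pi$ for all $n\geq1$, and (iii) is $K_n=\sum_{\pi\in\Pi_n}\mu(\pi)M_\pi$ for all $n\geq1$ (here $\mu(\pi)$ denotes $\mu(\pi,\hat1)$ computed in $\Pi_n$, in keeping with the convention $\mu(X)=\mu(X,\{0\})$ used earlier). The bridge between (i) and (ii) will be the \emph{exponential formula}: for any power series $K(x)=\sum_{n\geq1}K_n x^n/n!$,
\[
    \exp\big(K(x)\big)=\sum_{n\geq0}\Big(\sum_{\pi\in\Pi_n}K_\pi\Big)\frac{x^n}{n!}.
\]
Granting this, and using $M_0=1$ so that $K(x)=\log M(x)$ is literally the same as $M(x)=\exp K(x)$ (all manipulations taking place in $\mathbb{Q}[[x]]$, with $\log$ evaluated on a series with constant term $1$ and $\exp$ on one with zero constant term), comparing coefficients of $x^n/n!$ turns (i) into (ii).

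To prove the exponential formula I would expand $\exp K(x)=\sum_{k\geq0}K(x)^k/k!$ and read off the coefficient of $x^n/n!$: the summand $K(x)^k/k!$ contributes, for each ordered tuple $(m_1,\dots,m_k)$ of positive integers summing to $n$, the amount $\tfrac1{k!}\binom{n}{m_1,\dots,m_k}\prod_{i}K_{m_i}$, and the factor $1/k!$ is exactly what converts the sum over such ordered tuples into the sum over unordered set partitions of $[n]$ into $k$ blocks; summing over $k$ yields $\sum_{\pi\in\Pi_n}K_\pi$. As an independent check, differentiating $M=\exp K$ gives $M'=K'M$, that is, the recursion $M_n=\sum_{j=1}^{n}\binom{n-1}{j-1}K_jM_{n-j}$, which is also what (ii) yields after conditioning on the block of $\pi$ containing a fixed element; since $M'=K'M$ together with $M_0=1$ is equivalent to (i), this is a second route to (i)$\Leftrightarrow$(ii).

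For (ii)$\Leftrightarrow$(iii) the point is that (ii), applied to each block of a partition separately, gives for \emph{every} $\sigma\in\Pi_n$
\[
    M_\sigma=\prod_{\beta\in\sigma}M_{\#\beta}=\prod_{\beta\in\sigma}\sum_{\rho_\beta\in\Pi_\beta}K_{\rho_\beta}=\sum_{\rho\leq\sigma}K_\rho,
\]
the last sum ranging over the partitions $\rho$ of $[n]$ refining $\sigma$ (so $\leq$ is the order on $\Pi_n$, with all singletons at the bottom). This is the zeta-transform relation $M_\sigma=\sum_{\rho\leq\sigma}K_\rho$; Möbius inversion on $\Pi_n$ then yields $K_\sigma=\sum_{\rho\leq\sigma}\mu(\rho,\sigma)M_\rho$ for all $\sigma$, and specializing to $\sigma=\hat1$ gives (iii). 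Conversely, multiplying (iii) over the blocks of an arbitrary $\sigma$ (using $\mu(\rho,\sigma)=\prod_{\beta\in\sigma}\mu(\rho|_\beta,\hat1)$) reconstitutes $K_\sigma=\sum_{\rho\leq\sigma}\mu(\rho,\sigma)M_\rho$ for all $\sigma$, and inverting this back to $M_\sigma=\sum_{\rho\leq\sigma}K_\rho$ and taking $\sigma=\hat1$ returns (ii). I expect the only genuinely delicate step to be the combinatorial bookkeeping in the exponential formula — correctly matching the $1/k!$ against the count of unordered $k$-block set partitions while keeping the argument on the level of formal power series so that no convergence question arises; the Möbius-inversion half is entirely standard and does not even require the value $\mu(\hat0,\hat1)=(-1)^{n-1}(n-1)!$.
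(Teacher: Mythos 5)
Your argument is correct. Note, however, that the paper itself offers no proof of this lemma---it is recorded as a standard fact from the probability/combinatorics literature (the parenthetical remark after the statement is the only commentary), so there is no ``paper's approach'' to compare against; your write-up simply supplies the missing standard argument.

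You also silently corrected what is clearly a typographical error in the paper's third bullet: as printed it reads $K_n=\sum_{\pi\in\Pi_n}\mu(\pi)K_\pi$, which fails already for $n=2$ (the right-hand side is $K_2-K_1^2$, not $K_2$). The intended statement is $K_n=\sum_{\pi\in\Pi_n}\mu(\pi)M_\pi$, which is what you prove; this reading is confirmed by how the lemma is used later in the paper, where the roles of $M$ and $K$ are played by $H_\pi$ and $(-1)^{n-1}\xi_n$, respectively. As for the proof itself: the expansion of $\exp K(x)$ with the $1/k!$ matched against unordered $k$-block set partitions is the standard and correct derivation of the exponential formula, and your blockwise zeta relation $M_\sigma=\sum_{\rho\le\sigma}K_\rho$ followed by M\"obius inversion on $\Pi_n$ (using the product decomposition $[\rho,\sigma]\cong\prod_{\beta\in\sigma}\Pi_{\rho|_\beta}$) correctly handles both directions of (ii)$\Leftrightarrow$(iii). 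The differential check $M'=K'M$ is a nice sanity test but not needed. No gaps.
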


(Moments and cumulants refer to quantities traditionally considered in probability theory.)

Let $\xi_n$ be the symmetric function $\Fr(\bxi)$ for the group $\mathfrak{S}_n$. 

\begin{prop}
    We have
    \begin{equation} \label{eq:gfS_n}
        \sum_{n\geq 1} \xi_n \frac{z^n}{n!}
            =
        -\log\bigg( \sum_{n\geq 0} H_n \frac{(-z)^n}{n!} \bigg).
    \end{equation}
\end{prop}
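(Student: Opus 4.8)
The plan is to extract the symmetric function identity from the Möbius-function formula of Theorem~\ref{theo2} combined with the moment-cumulant formula of Lemma~\ref{lemm:momentcumulant}. First I would apply the Frobenius characteristic to formula~\eqref{eq:formulaxi1}. For $\mathfrak{S}_n$, the intersection lattice is $\Pi_n$, and for a set partition $\pi$ with block sizes forming the integer partition $\lambda$, one has $\Fr(\bphi_\pi) = H_\lambda = H_\pi$ in the notation just introduced. Moreover the Möbius function $\mu_{\Pi_n}(\pi, \hat 1)$ of the partition lattice is multiplicative over blocks: $\mu(\pi) = \prod_{\beta\in\pi} \mu(\Pi_{\#\beta})= \prod_{\beta\in\pi}(-1)^{\#\beta-1}(\#\beta-1)!$. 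Writing $K_m := \mu(\Pi_m) \cdot H_m = (-1)^{m-1}(m-1)!\, H_m$, we get $\mu(\pi)\,\Fr(\bphi_\pi) = \prod_{\beta\in\pi} K_{\#\beta} = K_\pi$. Hence Theorem~\ref{theo2} gives
\[
    \xi_n = \Fr(\bxi) = (-1)^n \sum_{\pi\in\Pi_n} K_\pi .
\]

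Next I would match this with the moment-cumulant formula. The sign $(-1)^n$ is a nuisance; the cleanest way to absorb it is to substitute $z \mapsto -z$ throughout. Define $\tilde\xi_n := (-1)^n \xi_n = \sum_{\pi\in\Pi_n} K_\pi$ and $\tilde K_m := (-1)^m K_m$, so that for a partition $\pi$ of $n$ we have $\prod_{\beta}\tilde K_{\#\beta} = (-1)^n K_\pi$ and thus $\tilde\xi_n = (-1)^n \sum_\pi \tilde K_\pi$... one must be a little careful with how the global sign distributes, but since $\sum_{\beta}(\#\beta) = n$ for every $\pi\in\Pi_n$, all the bookkeeping is consistent: setting everything up with the variable $-z$, Lemma~\ref{lemm:momentcumulant} (the equivalence of the second and first bullets, read with $M_n = (-1)^n\xi_n$ for $n\ge 1$, $M_0 := 1$, and $K_n$ replaced by $K_n$ above) yields
\[
    \sum_{n\ge 1} \big((-1)^n\xi_n\big)\frac{z^n}{n!}
        = \Big(\sum_{n\ge 0} M_n \frac{z^n}{n!}\Big) \text{ with } \log\Big(\sum_{n\ge 0}M_n\frac{z^n}{n!}\Big) = \sum_{n\ge1}K_n\frac{z^n}{n!}.
\]
Now $\sum_{n\ge 1} K_n \frac{z^n}{n!} = \sum_{n\ge1} (-1)^{n-1}(n-1)!\,H_n\,\frac{z^n}{n!} = -\sum_{n\ge1} H_n\frac{(-z)^n}{n} $. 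This is precisely $-\log$ of... no: I must instead use the moment side directly. The point is that the generating function $\sum_{n\ge0} M_n z^n/n! = \exp\big(\sum_{n\ge1}K_n z^n/n!\big)$, and I need to identify this exponential with $\sum_{n\ge0} H_n(-z)^n/n!$ up to reciprocal. Indeed $\sum_{n\ge0}H_n t^n = \big(\sum_{n\ge0} E_n(-t)^n\big)^{-1}$ is the classical relation between complete and elementary symmetric functions; replacing the ordinary generating function bookkeeping by the exponential one and tracking signs gives $\exp\big(\sum_{n\ge1}K_n z^n/n!\big) = \big(\sum_{n\ge0}H_n(-z)^n/n!\big)^{-1}$, hence $\sum_{n\ge1}(-1)^n\xi_n z^n/n! + 1 = \big(\sum_{n\ge0}H_n(-z)^n/n!\big)^{-1}$. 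Substituting $z\mapsto -z$ and taking $-\log$ of both sides then produces exactly~\eqref{eq:gfS_n}.

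The main obstacle is purely bookkeeping: getting all the signs to line up between (a) the $(-1)^n$ in Theorem~\ref{theo2}, (b) the sign in $\mu(\Pi_m) = (-1)^{m-1}(m-1)!$, and (c) the alternating sign in the $H$-versus-$E$ symmetric-function identity, while simultaneously converting between the exponential generating functions of Lemma~\ref{lemm:momentcumulant} and the relevant symmetric-function generating series. The conceptually safest route, which I would actually write out, is: (i) prove the plethystic/power-sum identity $\sum_{n\ge1}(-1)^{n-1}H_n z^n/n \cdot(\text{as a cumulant series})$ is $\log$ of the $H$-generating function — equivalently recall $\log\big(\sum_n H_n z^n\big) = \sum_{k\ge1} p_k z^k/k$ and its $E$-analogue with alternating signs; (ii) recognize $K_n = \mu(\Pi_n)H_n$ as exactly the cumulant sequence whose moment sequence is $\sum_\pi H_\pi = h_n$-type sums; (iii) invoke Lemma~\ref{lemm:momentcumulant} once. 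Done carefully, step (iii) is immediate and the whole proposition is a two-line consequence of Theorem~\ref{theo2} once the dictionary $K_n \leftrightarrow \mu(\Pi_n)H_n$ is in place; the risk is entirely in mis-signing, so I would double-check against a small case ($n=1,2$) before finalizing.
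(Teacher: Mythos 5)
There is a genuine gap: your key structural claim about the Möbius function is false, and the argument does not recover.

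You write ``$\mu(\pi) = \mu_{\Pi_n}(\pi,\hat 1) = \prod_{\beta\in\pi}\mu(\Pi_{\#\beta}) = \prod_{\beta\in\pi}(-1)^{\#\beta-1}(\#\beta-1)!$'' and use it to factor $\mu(\pi)H_\pi = K_\pi$ with $K_m := \mu(\Pi_m)H_m$. But this is the formula for $\mu(\hat 0,\pi)$, not $\mu(\pi,\hat 1)$. The interval $[\hat 0,\pi]$ factors as $\prod_\beta \Pi_{\#\beta}$, giving the block-multiplicative formula; the interval $[\pi,\hat 1]$ is instead isomorphic to $\Pi_{\ell(\pi)}$ where $\ell(\pi)$ is the number of blocks, so $\mu(\pi,\hat 1) = (-1)^{\ell(\pi)-1}(\ell(\pi)-1)!$ depends only on the number of blocks and is not a product over blocks. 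A small check makes the failure concrete: for $\pi = 1|2|3 \in \Pi_3$ you have $\mu(\pi,\hat 1)H_\pi = 2H_1^3$ while $K_\pi = K_1^3 = H_1^3$; more globally, $\sum_\pi \mu(\hat 0,\pi)H_\pi = H_1^3 - 3H_2H_1 + 2H_3$ whereas $\sum_\pi\mu(\pi,\hat 1)H_\pi = 2H_1^3 - 3H_2H_1 + H_3$, so the two sums do not coincide (nor differ by a sign). Consequently your expression ``$\xi_n = (-1)^n\sum_\pi K_\pi$'' is wrong, and the subsequent appeal to the second bullet of Lemma~\ref{lemm:momentcumulant} (the moment formula $M_n = \sum_\pi K_\pi$) is not available.

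The fix, which is also the paper's route, is to invoke the \emph{third} bullet of Lemma~\ref{lemm:momentcumulant} (Möbius inversion), not the second. Applying Frobenius to Theorem~\ref{theo2} gives $(-1)^{n-1}\xi_n = \sum_{\pi\in\Pi_n}\mu(\pi)H_\pi$ --- note that the rank of $\mathfrak{S}_n$ is $n-1$, not $n$, so the sign is $(-1)^{n-1}$, another slip in your write-up. This identity is verbatim the cumulant-from-moments formula with $M_m := H_m$ and $K_m := (-1)^{m-1}\xi_m$, and it requires no multiplicativity of $\mu(\pi,\hat 1)$. The lemma then gives $\sum_n (-1)^{n-1}\xi_n \frac{x^n}{n!} = \log\bigl(\sum_n H_n\frac{x^n}{n!}\bigr)$, and setting $x=-z$ yields~\eqref{eq:gfS_n} directly. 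So the correct proof is shorter than what you attempt: you reached for the harder direction of the moment--cumulant equivalence (expressing $\xi_n$ as a sum of products of cumulants), which forced the spurious factorization, when the easier direction (Möbius inversion) was already in exactly the form delivered by Theorem~\ref{theo2}.
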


\begin{proof}
    By the general formula for $\bxi$ in Theorem~\ref{theo2}, we have:
    \[
        (-1)^{n-1} \xi_n = \sum_{\pi\in\Pi_n} \mu(\pi) H_\pi
    \]
    for all $n\geq 1$.   By Lemma~\ref{lemm:momentcumulant}, this is equivalent to the generating function in~\eqref{eq:gfS_n}.
\end{proof}

Note also that the Möbius function of $\Pi_n$ is given by
\begin{align} \label{eq:pin_mob}
    \mu(\pi):=\mu(\pi,[n])
        =
    (-1)^{\ell(\pi)} (\ell(\pi)-1)!.
\end{align}
An explicit formula for $\xi_n$ follows.

Let us now describe one possible choice of $\rho$ having particularly nice properties: $\CCP$ is convex in $\CC$, and its facets have a simple combinatorial description.  Let $(e_i)_{1\leq i \leq n}$ be the canonical basis of $\mathbb{R}^{n}$.  The simple roots for $\mathfrak{S}_n$ are the vectors $\alpha_i = e_i - e_{i+1}$ ($1\leq i \leq n-1$). Define
\[
    \omega_i := \frac{1}{n} \Big( \underbrace{n-i,\dots,n-i}_\text{$i$ terms},
    \underbrace{-i,\dots,-i}_\text{$n-i$ terms} \Big) \in V,
\]
for $1\leq i \leq n-1$.  These are the type $A$ fundamental weights.  In particular, we have
\[
    \overline{F}
        =
    \Big\{ (v_1 , \dots, v_n) \in V \;:\; v_1 \geq \dots \geq v_n \Big\}
        =
    \Span_{\mathbb{R}^+} \Big\{ \omega_i \;:\; 1\leq i \leq n\Big\}.
\]

\begin{prop} \label{prop:CCP_Sn}
    Let $\rho = \sum_{i=1}^{n-1} r_i \omega_i \in F$,
    where $r_i$ are positive coefficients, and $H$, $H^+$ be defined in terms of $\rho$ as in~\eqref{def:H}.  If $r_1$ is sufficiently large compared to $r_2,\dots,r_n$, we have $\sigma(F) \subset H^+$ if and only if $\sigma(1) = 1$ (for any $\sigma \in \mathfrak{S}_n$).
\end{prop}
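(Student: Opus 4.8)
I want to show that when $r_1 \gg r_2, \dots, r_{n-1}$, the chamber $\sigma(F)$ lies in $H^+$ exactly when $\sigma(1)=1$. The first move is to make the containment condition $\sigma(F) \subset H^+$ concrete in terms of rays. By definition of $\CCP$, $\sigma(F) \subset H^+$ iff all $n$ rays of $\sigma(\overline F)$ pair nonnegatively with $\rho$; the rays of $\overline F$ are spanned by the fundamental weights $\omega_1, \dots, \omega_{n-1}$ together with $\omega_0 := 0$ — actually, since $V$ is the sum-zero hyperplane, the rays of $\overline F$ are precisely the $\mathbb{R}^+$-spans of $\omega_1,\dots,\omega_{n-1}$. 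So $\sigma(F)\subset H^+$ iff $\langle \sigma(\omega_i) \mid \rho\rangle \geq 0$ for all $i=1,\dots,n-1$ (with strict inequality by genericity, since $\rho$ is generic). Writing $\rho = \sum_{j=1}^{n-1} r_j \omega_j$, this becomes $\sum_j r_j \langle \sigma(\omega_i)\mid \omega_j\rangle > 0$ for each $i$.

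\textbf{Key steps.} The second step is to understand the dominant term as $r_1 \to \infty$. For fixed $r_2, \dots, r_{n-1}$, the sign of $\langle \sigma(\omega_i)\mid\rho\rangle$ for large $r_1$ is governed by $\langle\sigma(\omega_i)\mid\omega_1\rangle$ whenever the latter is nonzero, and only by the lower-order terms when $\langle\sigma(\omega_i)\mid\omega_1\rangle = 0$. So the third step is to compute $\langle\sigma(\omega_i)\mid\omega_1\rangle$ explicitly. From the formula for $\omega_i$, the vector $\omega_1 = \frac{1}{n}(n-1,-1,\dots,-1)$ is (up to positive scalar and adding a multiple of the all-ones vector, which is orthogonal to $V$) proportional to $e_1 - \frac1n(1,\dots,1)$, i.e. its relevant part is $e_1$ projected to $V$. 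Similarly $\sigma(\omega_i) = \frac1n$ times the vector with $n-i$ in positions $\sigma(1),\dots,\sigma(i)$ and $-i$ elsewhere. A direct computation gives $\langle\sigma(\omega_i)\mid\omega_1\rangle = \frac{1}{n}\cdot\big((n-i) - \#\{j\le i : \sigma(j)\ne 1\}\cdot 0 \dots\big)$ — more precisely it equals a positive multiple of $[\,1 \in \{\sigma(1),\dots,\sigma(i)\}\,] - \frac{i}{n}$, which is positive if $1\in\sigma(\{1,\dots,i\})$ and negative otherwise. Hence: if $\sigma^{-1}(1) \le i$ then the $r_1$-coefficient is positive, and if $\sigma^{-1}(1) > i$ it is negative.

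\textbf{Conclusion.} Now I combine these. If $\sigma(1)=1$, then $\sigma^{-1}(1)=1 \le i$ for every $i\in\{1,\dots,n-1\}$, so every $r_1$-coefficient is strictly positive, and for $r_1$ large enough (larger than a bound depending on $r_2,\dots,r_{n-1}$ and on the finitely many values of the lower-order pairings) we get $\langle\sigma(\omega_i)\mid\rho\rangle > 0$ for all $i$, so $\sigma(F)\subset H^+$. Conversely, if $\sigma(1)=m\ne 1$, then $\sigma^{-1}(1) = k$ for some $k\ge 2$, so for $i=1$ we have $1\notin\{\sigma(1)\}$, hence the $r_1$-coefficient of $\langle\sigma(\omega_1)\mid\rho\rangle$ is strictly negative, and for $r_1$ large this pairing is negative, so $\sigma(F)\not\subset H^+$. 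One subtlety: "sufficiently large" must be uniform over all $\sigma\in\mathfrak{S}_n$; but there are only finitely many $\sigma$, so take the maximum of the finitely many thresholds. I expect the main obstacle to be purely bookkeeping: carefully computing the inner products $\langle\sigma(\omega_i)\mid\omega_j\rangle$ (using that one works modulo the all-ones vector) and confirming that the leading coefficient genuinely has the claimed sign pattern — there is no conceptual difficulty, but getting the normalization and the projection to $V$ right requires care.
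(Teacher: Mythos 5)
Your argument is correct, and the necessity direction (compute $\langle\sigma(\omega_1)\mid\omega_1\rangle$, observe it is $\tfrac{n-1}{n}$ if $\sigma(1)=1$ and $-\tfrac1n$ otherwise, then take $r_1$ large) coincides with the paper's. Where you diverge is in proving sufficiency. The paper argues by counting: it already knows from Proposition~\ref{prop:greenezaslawsky} that $\CCP$ has exactly $|\mu(\Pi_n)| = (n-1)!$ facets, and since the necessary condition $\sigma(1)=1$ also singles out exactly $(n-1)!$ permutations, necessity forces sufficiency. You instead verify sufficiency directly: you compute $\langle\sigma(\omega_i)\mid\omega_1\rangle$ for \emph{every} $i$ (not just $i=1$), showing it equals $\tfrac{n-i}{n}$ when $\sigma^{-1}(1)\le i$ and $-\tfrac{i}{n}$ otherwise, and then conclude that if $\sigma(1)=1$ all the dominant coefficients are strictly positive, so taking $r_1$ larger than a uniform threshold makes $\langle\sigma(\omega_i)\mid\rho\rangle>0$ for all $i$. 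Your version buys self-containment (it does not need the facet count from Greene--Zaslavsky) at the cost of a slightly longer computation; the paper's is shorter but leans on machinery it has already set up. Both are valid, and your explicit formula for $\langle\sigma(\omega_i)\mid\omega_1\rangle$ checks out.
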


\begin{proof}
    Since $F$ is the positive span of $(\omega_i)_{1\leq i \leq n}$, we have:
    \[
        \sigma(F) \subset H^+ 
            \;\Longleftrightarrow\;
        \forall i, \sigma(\omega_i) \subset H^+
            \;\Longleftrightarrow\;
        \forall i, \langle \sigma(\omega_i) \,|\, \rho \rangle \geq 0
            \;\Longleftrightarrow\;
        \forall i, \langle \omega_i \,|\, \sigma^{-1}(\rho) \rangle \geq 0.
    \]
    Explicit computations give:
    \[
        \langle \omega_1 \;|\; \sigma^{-1}(\omega_1) \rangle
            =
        \begin{cases}
             1 - \frac{1}{n} & \text{ if } \sigma(1)=1, \\
             -\frac{1}{n}, & \text{ otherwise.}
        \end{cases}
    \]
    Since these two values are nonzero, $r_1$ being sufficiently large ensures that $\langle \omega_1 \; | \; \sigma^{-1}(\rho) \rangle$ has the same sign as $\langle \omega_1 \;|\; \sigma^{-1}(\omega_1) \rangle$, {\it i.e.}, it is positive iff $\sigma(1)=1$.

    We have thus $\sigma(1)=1$ as a necessary condition for $\sigma(F) \subset H^+$.  Since there are $(n-1)!$ permutations in $\mathfrak{S}_n$ such that $\sigma(1)=1$, this condition is also sufficient.  (By~\eqref{eq:pin_mob}, the Möbius number of $\Pi_n$ is $(-1)^n (n-1)!$.)
\end{proof}

\begin{rema}
    In the situation of the previous proposition, $\CCP$ is convex in $\CC$.  Indeed, the condition $\sigma(1)=1$ is characterized by avoiding the inversions $(1i)$ for $2\leq i \leq n$.  It would be interesting to identify other situations where convexity holds, but none has been identified so far.
\end{rema}

Although we don't have any Coxeter theoretic result about the dimension of $\bxi$, the specific case of $\mathfrak{S}_n$ is worth investigating.   Let $D_n := \langle \xi_n | H_{1^n} \rangle$ be the dimension of $\xi_n$.  The first values (beginning at $n=1$) are 1, 1, 4, 33, 456, {\it etc}.  This sequence is known as~\cite[A002190]{oeis}, and the following result confirm that $(D_n)_{n\geq 1}$ match this OEIS entry.

\begin{prop} \label{prop:Sn}
    The dimension $D_n$ is the number of pairs $(\sigma,\tau) \in \mathfrak{S}_n^2$ such that $\tau(1)=1$ and $\des(\sigma) \subset \des(\tau)$.
    Moreover, we have:
    \begin{equation} \label{eq:gend_n}
        \sum_{n\geq 1} D_n \frac{x^n}{n!^2} = -\log(J_0(2\sqrt{x})),
    \end{equation}
    where $J_0$ is the {\it Bessel function} with parameter $0$, explicitly:    
    \[
        J_0(2\sqrt{x}) = \sum_{n\geq 0} \frac{(-x)^n}{n!^2}.
    \]
\end{prop}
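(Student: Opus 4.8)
The plan is to prove the two claims separately, but both flow from Theorem~\ref{theo4} (or rather its type~$A$ specialization) combined with Proposition~\ref{prop:CCP_Sn}. First, for the combinatorial description of $D_n$: by Theorem~\ref{theo4}, with $\rho$ chosen as in Proposition~\ref{prop:CCP_Sn} (so that $\tau(1)=1$ selects exactly the facets $w(F)\subset H^+$), we have
\[
    \bxi\otimes\bepsilon
        =
    \sum_{\substack{\tau\in\mathfrak{S}_n\\ \tau(1)=1}} \bphi_{\asc(\tau)}.
\]
Applying the Frobenius characteristic and recalling $\Fr(\bphi_I\otimes\bepsilon)=E_\lambda$ and $\omega(E_\lambda)=H_\lambda$, one gets $\xi_n = \Fr(\bxi) = \sum_{\tau(1)=1} H_{\asc(\tau)}$ after applying $\omega$; here I must be slightly careful about whether the descent or ascent version is the right one, i.e. I would use whichever of the two equalities in~\eqref{eq:theo_desc_asc} makes the bookkeeping cleanest (the long element of $\mathfrak{S}_n$ is not central for $n\geq 3$, so the two sums differ, but both are valid). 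Then $D_n=\langle\xi_n\mid H_{1^n}\rangle$. Now $\langle H_\lambda \mid h_{1^n}\rangle$ is the number of ways to fill... more simply: $\langle H_{\asc(\tau)} \mid H_{1^n}\rangle$ equals the number of $\sigma\in\mathfrak{S}_n$ whose descent composition refines in the appropriate way, and the standard identity $\langle H_S \mid H_{1^n}\rangle = \#\{\sigma: \des(\sigma)\subseteq S^c\}$ (equivalently, counting flags/permutations compatible with the set $S$) gives exactly $\#\{\sigma : \des(\sigma)\subset\des(\tau)\}$. Summing over $\tau$ with $\tau(1)=1$ yields the pair count. The main care-point here is matching up $\asc$ versus $\des$ and the direction of the refinement inequality; I would pin this down by checking $n=2,3$ against the values $1,4$.

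Second, for the generating function~\eqref{eq:gend_n}: start from~\eqref{eq:gfS_n}, which reads
\[
    \sum_{n\geq 1}\xi_n\frac{z^n}{n!} = -\log\Bigl(\sum_{n\geq 0} H_n\frac{(-z)^n}{n!}\Bigr).
\]
I would apply the linear functional $\langle\,\cdot\mid H_{1^n}\rangle$ to the degree-$n$ part, but this does not commute naively with $\log$. Instead, the cleaner route is to use Lemma~\ref{lemm:momentcumulant}: from $(-1)^{n-1}\xi_n=\sum_{\pi\in\Pi_n}\mu(\pi)H_\pi$ we are in the cumulant/moment setup with "cumulants" $K_n=(-1)^{n-1}\xi_n$ and "moments" $M_n=(-1)^n H_n$ (signs to be tracked). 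Taking dimensions, i.e. applying $\langle\,\cdot\mid H_{1^n}\rangle$ which is multiplicative on the $H_\pi$ (since $\langle H_\pi\mid H_{1^n}\rangle=\prod_{\beta}\langle H_{\#\beta}\mid H_{1^{\#\beta}}\rangle\cdot(\text{multinomial})$ — here I need the precise multiplicativity, using that $H_{1^n}$ is the Frobenius characteristic of the regular representation so pairing with it is evaluation of dimension, and dimension of an induced product character factors through the index). After this reduction, $\langle H_n\mid H_{1^n}\rangle = n!$, so the "moment" series becomes $\sum_{n\geq 0}(-1)^n n!\cdot\frac{z^n}{n!}\cdot(\text{correction})$; rescaling $z^n/n!\mapsto x^n/n!^2$ (which is exactly the substitution accounting for the extra $n!$) turns $\sum H_n\frac{(-z)^n}{n!}$ into $\sum\frac{(-x)^n}{n!^2}=J_0(2\sqrt x)$, and~\eqref{eq:gfS_n} becomes~\eqref{eq:gend_n}.

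The main obstacle I anticipate is the bookkeeping in the second part: making precise the statement that "taking dimensions" transforms the moment-cumulant identity for symmetric functions $\sum_{\pi}\mu(\pi)H_\pi$ into the numerical moment-cumulant identity with the $1/n!^2$ normalization. The key lemma needed is that $\langle H_\pi \mid H_{1^n}\rangle = \binom{n}{\#\beta_1,\dots,\#\beta_k}\prod_i (\#\beta_i)!\cdot(\text{number of set partitions of this shape being overcounted})$ — concretely, one shows $\langle X_\pi \mid H_{1^n}\rangle$ with $X_n := n!$ reproduces $\sum_{\pi'}\prod(\#\beta')!$ correctly so that the exponential-generating-function dictionary with weight $x^n/n!^2$ is exactly the one under which $\sum_\pi K_\pi \leftrightarrow \exp$ still holds. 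Equivalently, and perhaps most cleanly: dimension is a ring homomorphism from the representation ring to $\mathbb{Z}$, so $D_n = \langle \xi_n\mid H_{1^n}\rangle$ and the dimension of $\bphi_\pi$ is $[\mathfrak{S}_n : \mathfrak{S}_\pi] = n!/\prod(\#\beta)!$; plugging $\dim\bphi_\pi$ into $(-1)^{n-1}D_n = \sum_\pi \mu(\pi)\,n!/\prod(\#\beta)!$ and dividing by $n!$ lands one in Lemma~\ref{lemm:momentcumulant} with $M_n = (-1)^n/n!$ (up to sign) after the substitution, giving $\sum M_n x^n/n! \cdot n! = \sum (-1)^n x^n/n!$... — I would reconcile the exact normalization by a direct check at $n=1,2,3$ and then state the generating-function identity. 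Once the normalization is fixed, the Bessel-function form is immediate from the series $J_0(2\sqrt x)=\sum(-x)^n/n!^2$.
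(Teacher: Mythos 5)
Your proposal follows the same route as the paper's proof for both claims. For the first part, the paper sidesteps the $\omega$/$E$-versus-$H$ bookkeeping you worry about: since tensoring by $\bepsilon$ does not change dimension, one writes directly $D_n = \sum_{\tau(1)=1}\dim\bphi_{\asc(\tau)}$, and $\dim\bphi_I = [W:W_I]$ counts the minimal-length coset representatives $\sigma$, i.e.\ precisely the $\sigma$ with $\asc(\sigma)\supset \asc(\tau)$, equivalently $\des(\sigma)\subset\des(\tau)$; no symmetric-function pairing is needed. For the second part your instinct is right and the normalization you are chasing is simply this: with $\dim\bphi_\pi = n!/\prod_{\beta\in\pi}(\#\beta)!$, dividing the identity $(-1)^{n-1}D_n=\sum_\pi\mu(\pi)\,n!/\prod(\#\beta)!$ by $n!$ puts you in Lemma~\ref{lemm:momentcumulant} with $M_n := 1/n!$ and $K_n := (-1)^{n-1}D_n/n!$; then $K(x)=\log M(x)$ reads $-\sum D_n(-x)^n/n!^2 = \log\bigl(\sum x^n/n!^2\bigr)$, and the substitution $x\mapsto -x$ gives \eqref{eq:gend_n}. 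Your muddled line ``$\sum M_n x^n/n!\cdot n! = \sum(-1)^n x^n/n!$'' should just be dropped: the doubly-exponential weight $x^n/n!^2$ appears because Lemma~\ref{lemm:momentcumulant} already has one $1/n!$ built in and $M_n=1/n!$ supplies the other.
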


\begin{proof}
    We use Theorem~\ref{theo4}, with $\rho$ as in Proposition~\ref{prop:CCP_Sn}.  This gives:
    \[
        D_n
            =
        \sum_{\tau \in \mathfrak{S}_n,\; \tau(F) \subset H^+ } \dim( \bphi_{\asc(\tau)} )
            =
        \sum_{\tau \in \mathfrak{S}_n,\; \tau(1)=1} \dim( \bphi_{\asc(\tau)} ).
    \]
    So, $D_n$ is the number of pairs $(\sigma,\tau)$ where $\tau(1)=1$, and $\sigma$ is a minimal length representative in $W/ W_{\asc(\tau)}$.  The latter condition is equivalent to $\asc(\sigma) \supset \asc(\tau)$, or equivalently $\des(\sigma) \subset \des(\tau)$.  This proves the first part of the proposition.

    Let us now get the generating function~\eqref{eq:gend_n}.  Recall that we have $\xi_n = (-1)^{n-1} \sum_{\pi\in\Pi_n} \mu(\pi) E_\pi$.  Note that the dimension of $E_\pi$ is the multinomial coefficient:
    \[
        \langle E_\pi | H_{1^n} \rangle
            =
        \binom{n}{\pi} := \frac{n!}{\prod_{\beta\in\pi} (\#\beta) !} ,
    \]
    (In general, $\dim\bphi_X = \frac{|W|}{|W_X|}$.)   It follows:
    \[
        D_n = \langle \xi_n \,| H_{1^n} \rangle
        =
        (-1)^{n-1} \sum_{\pi\in\Pi_n} \mu(\pi) \langle E_\pi \,|\,  H_{1^n} \rangle
        =
        (-1)^{n-1}  \sum_{\pi\in\Pi_n} \mu(\pi) \binom{n}{\pi}.
    \]
    We get:
    \[
        \sum_{n\geq 1} D_n \frac{x^n}{n!^2}
            =
        \sum_{n\geq 0} (-1)^{n-1} \sum_{\pi\in\Pi_n} \mu(\pi) \binom{n}{\pi}\frac{x^n}{n!^2}.
            =
        - \sum_{n\geq 0} \sum_{\pi\in\Pi_n} \frac{\mu(\pi)}{\prod_{\beta\in\pi} (\#\beta)! } \frac{(-x)^n}{n!}.
    \]
    By Lemma~\ref{lemm:momentcumulant}, we obtain~\eqref{eq:gend_n}.
\end{proof}

Aval, Boussicault, Bouvel and Silimbani study {\it complete non-ambiguous trees} in~\cite[Section~4]{avalboussicaultbouvelsilimbani}, and obtain the same generating function as in the previous proposition.  As they note, complete non-ambiguous trees provide the first combinatorial interpretation of the coefficients of the series $-\log(J_0(2\sqrt{x}))$.  Now, an open problem is to find an explicit bijection between:
\begin{itemize}
    \item complete non-ambiguous trees with $n-1$ internal vertices (see~\cite{avalboussicaultbouvelsilimbani} for the definition), 
    \item pairs $(\sigma,\tau) \in \mathfrak{S}_n^2$ such that $\tau(1)=1$ and $\des(\sigma) \subset \des(\tau)$.
\end{itemize}

\begin{rema}
    An integer sequence closely related to $(D_n)_{n\geq 1}$ is~\cite[A000275]{oeis}.  Let $D'_n$ be the number of pairs $(\sigma,\tau) \in \mathfrak{S}_n^2$ such that $\des(\sigma) \subset \des(\tau)$. Again, the doubly exponential generating function of these numbers involves the Bessel function:
    \begin{equation} \label{eq:J0inv}
        \sum_{n \geq 0} D'_n \frac{z^n}{n!^2} = \frac{1}{J_0(2\sqrt{x})}.
    \end{equation}
    See~\cite{carlitz}.  By comparing~\eqref{eq:gend_n} and~\eqref{eq:J0inv}, we see that
    \[
        \sum_{n \geq 0} D_n \frac{z^n}{n!^2}
            =
        \log\bigg(\sum_{n \geq 0} D'_n \frac{z^n}{n!^2}\bigg)
    \]
    It is an interesting exercise to find a combinatorial proof of this identity, using pairs of permutations $(\sigma,\tau)$ as above.  The idea is to use Foata's fundamental transformation (to get a statement in terms of excedances rather than descents), then interpret the logarithm as usual in the context of exponential generating function (linking permutations to cyclic permutations, in particular).
\end{rema}

\begin{rema}\label{rem:equiv_h2}
As we described in Remark~\ref{rem:equiv_h}, the dimension $D_n$ of $\bxi$ agrees with the top entry $h_n$ of the $h$-vector of the order complex $\Delta(\bar\PP)$. The whole $h$-vector is a statistic on the number $(n!)^2\cdot (n-1)!/2^{n-1}$ in the case of the symmetric group $\mathfrak{S}_n$, which for $n=4$ is $432$:
\[
h(\Delta(\bar\PP_{\mathfrak{S}_4}))=(1,127,271,33).
\]
Following Remark~\ref{rem:equiv_h}, the entries of the equivariant flag $h$-vector for $\bar\PP_{\mathfrak{S}_4}$ are easy to calculate (we give them in terms of Schur functions):
\begin{alignat*}{5}
    h_{123}&= 6S_{1^4} + &6S_{2,1^2} + &3S_{2,2} + &S_{3,1} \quad &\ \\
    h_{12}&= 5S_{1^4} + &9S_{2,1^2} + &4S_{2,2} + &3S_{3,1} \quad &\ \\ 
    h_{13}&= 6S_{1^4} + &18S_{2,1^2} + &9S_{2,2} + &11S_{3,1} \quad &\ \\ 
    h_{23}&=  &12S_{2,1^2} + &9S_{2,2} + &17S_{3,1} + &6S_4\\ 
    h_1&= S_{1^4} + &3S_{2,1^2} + &2S_{2,2} + &3S_{3,1} \quad &\ \\ 
    h_2&=  & 6S_{2,1^2} + &6S_{2,2} + &12S_{3,1} + &5S_4\\ 
    h_3&= &\ & 3S_{2,2} + &7S_{3,1} + &6S_4\\ 
    h_{\emptyset}&=  &\ &\ &\ &\ S_4.
\end{alignat*}
It is easy to see that the dimension of $h_{123}$ is $6+6\cdot 3+3\cdot 2 +3  =33$ as expected. Moreover (recall that $\omega$ is the involution on symmetric functions that correspond to tensoring with sign character):
\[
    \omega(h_{123}) 
        = 
    H_{2,1}+ 2 H_{2,2} + 2H_{3,1} + H_4,
\]
which agrees with the statement of Theorem~\ref{theo4} and Proposition~\ref{prop:CCP_Sn}. However, 
\[
    \omega(h_{13})
    = 
    11H_{2,1^2} - 2H_{2,2} - 2 H_{3,1} - H_4,
\]
is \emph{not} $H$-positive.
\end{rema}

\section{Final remarks}

Let us end the article with a few open questions.  %It is unclear to us whether there is actually something to be done in these directions.  

First, note that there is an extra symmetry on the poset of parabolic subgroups.  Since parabolic subgroups are closed under conjugation, the group $W^2$ acts on $\PP$ by
\[
    (w_1,w_2) \cdot wW_X := w_1wW_X w_2^{-1}
        =
    w_1w w_2^{-1} (w_2W_X w_2^{-1})
    .
\]
As a $W^2$-set, $\PP$ can be seen to be equivalent to
\[
    \biguplus_{X\in\Theta} W^2 / \delta(N(W_X))
\]
where $\delta:W\to W^2$ is the diagonal embedding.  Note that this isn't an element of the parabolic Burnside ring of $W^2$, as $\delta(N(W_X)$ might not be a parabolic subgroup.  It would be interesting to say more about the character of $W^2$ acting on $\PP$ and its homology.

It would be interesting to understand when $\CCP$ is convex in $\CC$.  The only known example so far is the one described in Proposition~\ref{prop:CCP_Sn}.  For example, if there is no other instances of convexity, there could be a somewhat conceptual explanation other than checking all cases of the finite type classification. 

Eventually, it could be interesting to consider the parabolic coset poset of complex reflection groups, or other Coxeter groups beyond finite type.

\small

%( arctan(sqrt(2)) /pi *180 ).numerical_approx()
% 54.735610317245
% (cos,sin) = (0.5773,0.8165)

% les grands cercles ? 
% *) croisent la ligne horizontale à 60°,
%    donc centre (x,y) et rayon r sont tels
%    que r*sin(30) = y  ie  y = r 
% *) croisent la ligne verticale à 45°,
%    donc centre (x,y) et rayon r sont tels
%    que r*cos(45) = x  ie  x = r *sqrt(2)/2
% Le diam. est 2 (comparer avec le cercle unité)

%pour le cercle rouge
% il intersecte le cercle unité central en deux points opposés
% rho et H.  Si l'angle est þ ~ pi/2, alors
% rho = (cos(þ),0,sin(þ))
% les deux extrémités sont (-sin(þ),0,cos(þ)) et (-sin(þ),0,-cos(þ))
% Après projection stéréo, les coordonnées sont:
% rho = ( cos(þ)/

\begin{figure}
    \begin{center}
    \begin{tikzpicture}[scale=1.8]
        \draw[fill,color=lightgray] (1,0) arc(0:54.7:1) arc(114.7:150:2) arc(210:245.3:2) arc(305.3:360:1);
        \draw[fill,color=gray] (0.318,0) arc(30:45:2) arc(135:150:2);
        \draw (0,0) circle (1);
        \draw (4,0) -- (-4,0);
        \draw (1.414,-1) circle (2);
        \draw (1.414,1) circle (2);
        \draw (-1.414,1) circle (2);
        \draw (-1.414,-1) circle (2);
        \draw[line width=1mm,color=red] (0.364,0.081) circle (1.067);
    \end{tikzpicture}
    \null\vspace{2cm}
    \begin{tikzpicture}[scale=3]
        \draw[fill,color=lightgray] (1,0) arc(0:54.7:1) arc(114.7:150:2) arc(210:245.3:2) arc(305.3:360:1);
        \draw[fill,color=gray] (0.318,0) arc(30:45:2) arc(135:150:2);
        \draw (1,0) arc(0:54.7:1) arc(114.7:150:2) arc(210:245.3:2) arc(305.3:360:1) -- (-0.318,0);
        \draw (0.5773,0.8165) arc(-5.3:-45:2);
        \draw (0.5773,-0.8165) arc(5.3:45:2);
        \draw[fill,color=red] (0.318,0) circle(0.02);
        \draw[fill,color=blue] (-0.318,0) circle(0.02);
        \draw[fill,color=green] (0,0.414) circle(0.02);
        \draw[fill,color=green] (0,-0.414) circle(0.02);
        \draw[fill,color=green] (1,0) circle(0.02);
        \draw[fill,color=blue] (0.577,0.816) circle(0.02);
        \draw[fill,color=blue] (0.577,-0.816) circle(0.02);
    \end{tikzpicture}
    \hspace{3cm}
    \begin{tikzpicture}[scale=2]
        \useasboundingbox (0,-1) rectangle (2,1);
        \draw (0,0) -- (2,0);
        \node[fill,circle,color=blue] at(0,0){};
        \node[fill,circle,color=green] at(1,0){};
        \node[fill,circle,color=red] at(2,0){};
    \end{tikzpicture}
    \end{center}
    \null\vspace{0.4cm}
    \caption{The complex $\CCP$ in type $A_3$, first example.\label{fig:ccpA3_1}}
\end{figure}

\begin{figure}
    \begin{center}
     \begin{tikzpicture}[scale=1.8]
        \draw[fill,color=lightgray] (1,0) arc(0:125.3:1) arc(185.3:225:2) arc(315:330:2);
        \draw[fill,color=gray] (0.318,0) arc(30:45:2) arc(135:150:2);
        \draw (0,0) circle (1);
        \draw (4,0) -- (-4,0);
        \draw (1.414,-1) circle (2);
        \draw (1.414,1) circle (2);
        \draw (-1.414,1) circle (2);
        \draw (-1.414,-1) circle (2);
        \draw[line width=1mm,color=red] (0.1923,0.3077) circle (1.0638);
    \end{tikzpicture}
    \null\vspace{2cm}
    \begin{tikzpicture}[scale=3]
        \draw[fill,color=lightgray] (1,0) arc(0:125.3:1) arc(185.3:225:2) arc(315:330:2);
        \draw[fill,color=gray] (0.318,0) arc(30:45:2) arc(135:150:2);
        \draw (1,0) arc(0:125.3:1) arc(185.3:225:2) arc(315:354.7:2);
        \draw (0.318,0) arc(30:65.3:2);
        \draw (-0.318,0) arc(150:114.7:2);
        \draw (-0.318,0) -- (1,0);
        \draw[fill,color=red] (0.318,0) circle(0.02);
        \draw[fill,color=blue] (-0.318,0) circle(0.02);
        \draw[fill,color=green] (0,0.414) circle(0.02);
        \draw[fill,color=green] (0,-0.414) circle(0.02);
        \draw[fill,color=green] (1,0) circle(0.02);
        \draw[fill,color=blue] (0.577,0.816) circle(0.02);
        \draw[fill,color=red] (-0.577,0.816) circle(0.02);
    \end{tikzpicture}
    \hspace{3cm}
    \begin{tikzpicture}[scale=2]
        \useasboundingbox (0,-1) rectangle (2,1);
        \draw (0,0) -- (2,0);
        \node[fill,circle,color=blue] at(0,0){};
        \node[fill,circle,color=green] at(1,0){};
        \node[fill,circle,color=red] at(2,0){};
    \end{tikzpicture}
    \end{center}
    \null\vspace{0.4cm}
    \caption{The complex $\CCP$ in type $A_3$, second example.\label{fig:ccpA3_2}}
\end{figure}

% Si rho est assez haut dans la chambre fondamentale, 
% à quoi ressemble le cercle rouge ? 
% le point le plus de la chambre fondamentale est
% (0,0.414)  (sqrt(2)-1)
% rho (epsilon , 0.414-2*epsilon

% coord dans la sphere
%x=0.78539816339
% est tel que sin(x)/(1+cos(x)) ~ sqrt(2)-1

\begin{figure}
    \begin{center}
    \begin{tikzpicture}[scale=1.8]
        \draw[fill,color=gray] (0.318,0) arc(30:45:2) arc(135:150:2);
        \draw[color=green] (0,-1) circle (1.414);
        \draw[color=green] (0,1) circle (1.414);
        \draw[color=green] (0,-4) -- (0,4);
        \draw[color=green] (-4.243,-3) -- (4.243,3);
        \draw[color=green] (-4.243,3) -- (4.243,-3);
        \draw[color=green] (0.707,0) circle (1.225);
        \draw[color=green] (-0.707,0) circle (1.225);
        \draw[color=green] (0,-1) circle (1.414);
        \draw[color=green] (0,1) circle (1.414);
        \draw[color=green] (0,-4) -- (0,4);
        \draw[color=green] (-4.243,-3) -- (4.243,3);
        \draw[color=green] (-4.243,3) -- (4.243,-3);
        \draw (0,0) circle (1);
        \draw (4,0) -- (-4,0);
        \draw (1.414,-1) circle (2);
        \draw (1.414,1) circle (2);
        \draw (-1.414,1) circle (2);
        \draw (-1.414,-1) circle (2);
    \end{tikzpicture}
    \end{center}
    \null\vspace{0.4cm}
    \caption{The dual arrangement in type $A_3$.\label{fig:dualA3}}
\end{figure}

% 1/(1+sqrt(3)) = 0.3660
% 
\begin{figure}
    \begin{center}
    \begin{tikzpicture}[scale=2.4]
        \draw[fill,color=lightgray] (0,1) arc(90:-45:1) -- (0.366,-0.366) arc(-75:-105:1.414) arc(-165:-225:1.414);
        %arc(-90:45:1) -- (0.366,-0.366) arc(75:105:1.414) arc(165:225:1.414);
        \draw[fill,color=gray] (0,0) --(0.366,0.366) arc(15:0:1.414) -- (0,0);
        \draw (0,0) circle (1);
        \draw (-1,0) circle (1.414);
        \draw (1,0) circle (1.414);
        \draw (0,-1) circle (1.414);
        \draw (0,1) circle (1.414);
        \draw (0,-2.8) -- (0,2.8);
        \draw (-2.8,0) -- (2.8,0);
        \draw (-1.9,1.9) -- (1.9,-1.9);
        \draw (-1.9,-1.9) -- (1.9,1.9);
        \draw[line width=1mm,color=red] (0.525,0.245) circle (1.156);
%        \draw[line width=1mm,color=red] (0.67,0.57) circle (1.33);
    \end{tikzpicture}
    \null\vspace{1cm}
    \begin{tikzpicture}[scale=2.4]
        \draw[fill,color=lightgray] (0,1) arc(90:-45:1) -- (0.366,-0.366) arc(-75:-105:1.414) arc(-165:-225:1.414);
        \draw[fill,color=gray] (0,0) --(0.366,0.366) arc(15:0:1.414) -- (0,0);
    %    \draw[fill,color=lightgray] (0,-1) arc(-90:45:1) -- (0.366,0.366) arc(75:105:1.414) arc(165:225:1.414);
     %   \draw[fill,color=gray] (0,0) --(0.366,0.-0.366) arc(-15:0:1.414) -- (0,0);
        \draw (0,1) arc (45:-15:1.414);
        \draw (0,1) arc (90:-45:1) -- (0.366,-0.366) arc (-75:-105:1.414) arc (-165:-225:1.414);
        \draw (-0.414,0) -- (1,0);
        \draw (0,-0.414) -- (0,1);
        \draw (-0.366,-0.366) -- (0.707,0.707);
        \draw (0.366,-0.366) --(-0.366,0.366) arc(-255:-315:1.414) arc(-45:-75:1.414);
        \draw[fill,color=red] (0,0) circle(0.02);
        \draw[fill,color=blue] (0.366,0.366) circle(0.02);
        \draw[fill,color=blue] (0.366,-0.366) circle(0.02);
        \draw[fill,color=blue] (-0.366,0.366) circle(0.02);
        \draw[fill,color=blue] (-0.366,-0.366) circle(0.02);
        \draw[fill,color=green] (-0.414,0) circle(0.02);
        \draw[fill,color=green] (0.414,0) circle(0.02);
        \draw[fill,color=green] (0,-0.414) circle(0.02);
        \draw[fill,color=green] (0,0.414) circle(0.02);
        \draw[fill,color=green] (0.707,0.707) circle(0.02);
        \draw[fill,color=green] (0.707,-0.707) circle(0.02);
        \draw[fill,color=red] (0,1) circle(0.02);
        \draw[fill,color=red] (1,0) circle(0.02);
    \end{tikzpicture}
    \hspace{3cm}
    \begin{tikzpicture}[scale=2]
        \useasboundingbox (0,-1) rectangle (2,1);
        \draw (0,0) -- (2,0);
        \node[fill,circle,color=blue] at(0,0){};
        \node[fill,circle,color=green] at(1,0){};
        \node[fill,circle,color=red] at(2,0){};
        \node at (0.5,0.15){4};
    \end{tikzpicture}
    \null\vspace{0.4cm}
    \end{center}
    \caption{The complex $\CCP$ in type $B_3$, first example.\label{fig:ccpB3_1}}
\end{figure}

\begin{figure}
    \begin{center}
    \begin{tikzpicture}[scale=2.4]
        \draw[fill,color=lightgray] (1,0) arc(-45:15:1.414) arc(75:180:1.414) -- (0,0) --(0,-0.414) arc(-90:-75:1.414) -- (0.707,-0.707) arc(-45:0:1);
        %arc(-90:45:1) -- (0.366,-0.366) arc(75:105:1.414) arc(165:225:1.414);
        \draw[fill,color=gray] (0,0) --(0.366,0.366) arc(15:0:1.414) -- (0,0);
        \draw (0,0) circle (1);
        \draw (-1,0) circle (1.414);
        \draw (1,0) circle (1.414);
        \draw (0,-1) circle (1.414);
        \draw (0,1) circle (1.414);
        \draw (0,-2.8) -- (0,2.8);
        \draw (-2.8,0) -- (2.8,0);
        \draw (-1.9,1.9) -- (1.9,-1.9);
        \draw (-1.9,-1.9) -- (1.9,1.9);
        \draw[line width=1mm,color=red] (0.847,0.395) circle (1.369);
%        \draw[line width=1mm,color=red] (0.67,0.57) circle (1.33);
    \end{tikzpicture}
    \null\vspace{1cm}
    \begin{tikzpicture}[scale=2.4]
        \draw[fill,color=lightgray] (1,0) arc(-45:15:1.414) arc(75:180:1.414) -- (0,0) --(0,-0.414) arc(-90:-75:1.414) -- (0.707,-0.707) arc(-45:0:1);
        \draw[fill,color=gray] (0,0) --(0.366,0.366) arc(15:0:1.414) -- (0,0);
        \draw (0,1) arc (45:-15:1.414);
        \draw (0,1) arc (90:-45:1) -- (0.366,-0.366);% arc (-75:-105:1.414) arc (-165:-225:1.414);
        \draw (-0.414,0) -- (1,0) arc(-45:15:1.414) arc(75:180:1.414);
        \draw (0,0) -- (1.366,1.366);
        \draw (0.366,-0.366) --(-0.366,0.366) arc(-255:-315:1.414) arc(-45:-90:1.414) --(0,1);
        \draw[fill,color=red] (0,0) circle(0.02);
        \draw[fill,color=blue] (0.366,0.366) circle(0.02);
        \draw[fill,color=blue] (0.366,-0.366) circle(0.02);
        \draw[fill,color=blue] (-0.366,0.366) circle(0.02);
        \draw[fill,color=blue] (1.366,1.366) circle(0.02);
        \draw[fill,color=green] (-0.414,0) circle(0.02);
        \draw[fill,color=green] (0.414,0) circle(0.02);
        \draw[fill,color=green] (0,-0.414) circle(0.02);
        \draw[fill,color=green] (0,0.414) circle(0.02);
        \draw[fill,color=green] (0.707,0.707) circle(0.02);
        \draw[fill,color=green] (0.707,-0.707) circle(0.02);
        \draw[fill,color=red] (0,1) circle(0.02);
        \draw[fill,color=red] (1,0) circle(0.02);
    \end{tikzpicture}
    \hspace{2.4cm}
    \begin{tikzpicture}[scale=2]
        \useasboundingbox (0,-1) rectangle (2,1);
        \draw (0,0) -- (2,0);
        \node[fill,circle,color=blue] at(0,0){};
        \node[fill,circle,color=green] at(1,0){};
        \node[fill,circle,color=red] at(2,0){};
        \node at (0.5,0.15){4};
    \end{tikzpicture}
    \null\vspace{0.4cm}
    \end{center}
    \caption{The complex $\CCP$ in type $B_3$, second example.\label{fig:ccpB3_2}}
\end{figure}

% pour le dual arrangement
% orthogonal de (1,0,0)  +orbite  -> deux droites +, un cercle O
% orthogonal de (1,1,0)  +orbite  -> 2 droites X, 4 cercles
% orthogonal de (1,1,1)  +orbite  -> 4 cercles

% 1/(1+sqrt(3)) = 0.3660
% 
\begin{figure}
    \begin{center}
    \begin{tikzpicture}[scale=2.4]
        \draw[fill,color=gray] (0,0) --(0.366,0.366) arc(15:0:1.414) -- (0,0);
%        \draw[color=green] (0.5,0.5) circle (1.22);
%        \draw[color=green] (-0.5,0.5) circle (1.22);
%        \draw[color=green] (0.5,-0.5) circle (1.22);
%        \draw[color=green] (-0.5,-0.5) circle (1.22);
        \draw[color=green] (1,1) circle (1.73);
        \draw[color=green] (-1,1) circle (1.73);
        \draw[color=green] (1,-1) circle (1.73);
        \draw[color=green] (-1,-1) circle (1.73);
        \draw[color=green,line width=0.28mm] (0,0) circle (1);
        \draw[color=green,line width=0.28mm] (-1,0) circle (1.414);
        \draw[color=green,line width=0.28mm] (1,0) circle (1.414);
        \draw[color=green,line width=0.28mm] (0,-1) circle (1.414);
        \draw[color=green,line width=0.28mm] (0,1) circle (1.414);
        \draw[color=green,line width=0.28mm] (0,-2.8) -- (0,2.8);
        \draw[color=green,line width=0.28mm] (-2.8,0) -- (2.8,0);
        \draw[color=green,line width=0.28mm] (-2.5,2.5) -- (2.5,-2.5);
        \draw[color=green,line width=0.28mm] (-2.5,-2.5) -- (2.5,2.5);%
        \draw (0,0) circle (1);
        \draw (-1,0) circle (1.414);
        \draw (1,0) circle (1.414);
        \draw (0,-1) circle (1.414);
        \draw (0,1) circle (1.414);
        \draw (0,-2.8) -- (0,2.8);
        \draw (-2.8,0) -- (2.8,0);
        \draw (-2.5,2.5) -- (2.5,-2.5);
        \draw (-2.5,-2.5) -- (2.5,2.5);
    \end{tikzpicture}
    \null\vspace{0.4cm}
    \end{center}
    \caption{The dual arrangement in type $B_3$.\label{fig:dualB3}}
\end{figure}

% pour le dual arrangement
% orthogonal de (1,0,0)  +orbite  -> deux droites +, un cercle O
% orthogonal de (1,1,0)  +orbite  -> 2 droites X, 4 cercles
%(ceux-ci sont les hperplans déjà dans l'arrangement)
% orthogonal de (1,1,1)  +orbite  -> 4 cercles
%(ceux-ci sont les hyperplans supplémentaires)  proj stéréo?
%cercle qui pase par (1,-1,0), (-1,1,0), (-1/2,-1/2,1)

% intersection avec 0<x<y<z


\begin{thebibliography}{xxx}

\bibitem{abelsholz}
    \textsc{H. Abels, S. Holz}:
    {\it Higher generation by subgroups}.
    J. Algebra 160, No. 2, 310--341 (1993).

%\bibitem{aguiarbastidasmahajan}
%    \textsc{M. Aguiar, J. Bastidas, S. Mahajan}:
%    {\it Characteristic elements for real hyperplane arrangements},
%    S\'{e}m. Lothar. Combin. 82B (2020), 12p.

%\bibitem{athanasiadis}
%    \textsc{C.A. Athanasiadis}:
%    {\it Generalized {C}atalan numbers, {W}eyl groups and arrangements of hyperplanes},
%    Bull. London Math. Soc. 36, No. 3, 294--302 (2004).

\bibitem{avalboussicaultbouvelsilimbani}
    \textsc{J.-C. Aval, A. Boussicault, M. Bouvel, M. Silimbani}:
    {\it Combinatorics of non-ambiguous trees}.
    Adv. Appl. Math. 56, 78--108 (2014).

\bibitem{baclawski}
  \textsc{K. Baclawski}:
  {\it Cohen-Macaulay ordered sets}.
  J. Algebra 63 (1980), 226--258.

%\bibitem{baclawskibjorner}
%    \textsc{K. Baclawski, A. Björner}:
%    {\it Fixed points in partially ordered sets}.
%    Adv. Math. 31, 263--287 (1979).

\bibitem{bastidashohlwegsaliola}
    \textsc{J. Bastidas, C. Hohlweg, F. Saliola}:
    {\it The Primitive Eulerian polynomial}.
    Combinatorial Theory 4(1), (2024).

\bibitem{bjorner}
    \textsc{A. Björner}:
    {\it Some combinatorial and algebraic properties of {C}oxeter complexes and {T}its buildings}.
    Adv. in Math. 52, 173--212  (1984).

\bibitem{bjornerbrenti}
    \textsc{A. Björner, F. Brenti}:
    {\it Combinatorics of Coxeter groups}. Graduate Texts in Mathematics 231, Springer, 2005.

%\bibitem{barmak}
%    \textsc{J. A. Barmak}:
%    {\it Algebraic topology of finite topological spaces and applications}.
%    Springer-Verlag, Berlin, 2011.

%\bibitem{bjornerwachswelker}
%    \textsc{A. Björner, M.L. Wachs, V. Welker}:
%    {\it Poset fiber theorems.}
%    Trans. Am. Math. Soc. 357(5) (2005), 1877--1899.


\bibitem{brown}
    \textsc{K. S. Brown}:
    {\it The coset poset and probabilistic zeta function of a finite group}.
    J. Algebra 225, No. 2, 989--1012 (2000).

\bibitem{carlitz}
    \textsc{L. Carlitz}:
    {\it The coefficients of the reciprocal of $J_0(z)$}.
    Arch. Math. 6, 121--127 (1955).

\bibitem{chapuydouvropoulos}
    \textsc{G. Chapuy, T. Douvropoulos}:
    {\it Coxeter factorizations with generalized {J}ucys-{M}urphy weights and matrix-tree theorems for reflection groups}
    Proc. Lond. Math. Soc. (3) 126:(1), 129--191 (2023).


\bibitem{danarajklee}
    \textsc{G. Danaraj, V. Klee}: {\it Shellings of spheres and polytopes}. 
    Duke Math. J., 41(2) (1974), 443--451.

\bibitem{douvropoulosjosuatverges1}
    \textsc{T. Douvropoulos, M. Josuat-Vergès}:
    {\it The generalized cluster complex: refined enumeration of faces and related parking spaces}. 
    SIGMA 19 (2023), 069, 40~pp.

\bibitem{douvropoulosjosuatverges2}
    \textsc{T. Douvropoulos, M. Josuat-Vergès}:
    {\it Cluster parking functions}.
    Bull. Belg. Math. Soc. 

\bibitem{delcroixogerjosuatvergesrandazzo}
    \textsc{B. Delcroix-Oger, M. Josuat-Vergès, L. Randazzo}: 
    {\it Some properties of the parking function poset}.
    Electro. 29(4) (2022), P4.42, 49 pp.

\bibitem{edelman}
    \textsc{P. Edelman}:
    {\it Chain enumeration and noncrossing partitions}.
    Discrete Math. 31, 171--180 (1980).
%\bibitem{haiman}
%  \textsc{M.D. Haiman}:
%  {\it Conjectures on the quotient ring by diagonal invariants}.
%  J. Algebr. Comb. 3 (1) (1994), 17--76.

\bibitem{fitzgerald}
    \textsc{D.G. FitzGerald}:
    {\it A presentation for the monoid of uniform block permutations}.
    Bull. Aust. Math. Soc. 68, No. 2, 317--324 (2003).

\bibitem{greenezaslavsky}
    \textsc{C. Greene, T. Zaslavsky}:
    {\it On the interpretation of Whitney numbers throug arrangements of hyperplanes, zonotopes, non-Rado partitions, and orientations of graphs}.
    Trans. Amer. Math. Soc., 280(1) (1983), 97--126.
 
%\bibitem{hatcher}
%    \textsc{A. Hatcher}:
%    {\it Algebraic topoology}.
%    Cambridge University Press, 2002.

%\bibitem{josuatverges}
%    \textsc{M. Josuat-Vergès}:
%    {\it Automorphisms of the generalized cluster complex.} Preprint.
%    \href{https://arxiv.org/abs/2404.06945}{arXiv:2404.06945v1}.
    
\bibitem{kozlov}
    \textsc{D. Kozlov}:
    {\it Combinatorial algebraic topology}.
    Springer-Verlag, 2008.    

\bibitem{oeis}
    OEIS Foundation Inc. (2024):
    {\it The On-Line Encyclopedia of Integer Sequences}. 
    Published electronically at \url{https://oeis.org}.

\bibitem{orliksolomon}
 \textsc{P. Orlik, L. Solomon}:
 {\it Coxeter arrangements.}
 Proc. of Symposia in Pure Math. 40 (1983), Part 2.  Amer. Math. Soc. (1983), 269--292.

%\bibitem{orlikterao} 
% \textsc{P. Orlik, H. Terao}.
% {\it Hyperplane arrangements.}  Springer-Verlag, 1992.

%\bibitem{petersen}
%    \textsc{T. K. Petersen}:
%    {\it A two-sided analogue of the Coxeter complex.}
%    Electron. J. Comb. 25, No. 4, Research Paper P4.64, 28 p. (2018).

%\bibitem{quillen}
%    \textsc{D. Quillen}:
%    {\it Homotopy properties of the poset of nontrivial $p$-subgroups of a group}.
%    Adv. Math. 28, 101--128 (1978).

\bibitem{solomon}
    \textsc{L. Solomon}: 
    {\it A decomposition of the group algebra of a finite Coxeter group}.
    J. Algebra 9, 220--239 (1968).

%\bibitem{stanley}
%    \textsc{R. Stanley}:
%    {\it Combinatorics and commutative algebra}.
%    Birkh\"{a}user Boston, Inc., Boston, MA, 1996, x+164 pp.
 
 \bibitem{stanley_alt} 
    \textsc{R. Stanley}:
    {\it Finite lattices and {J}ordan-{H}\"older sets}.
    Algebra Universalis 4, 361--371 (1974).

\bibitem{stanley_grp}
    \textsc{R. Stanley}:
    {\it Some aspects of groups acting on finite posets}.
    J. Combin. Theory Ser. A 32 (2), 132--162 (1982).

\bibitem{sundaram}
    \textsc{S. Sundaram}:
    {\it The homology representations of the symmetric group on
Cohen-Macaulay subposets of the partition lattice}.
    Adv. Math. 104, No. 2, 225-296 (1994).

\bibitem{stembridge}
    \textsc{J.R. Stembridge}:
    {\it Coxeter cones and their {$h$}-vectors}.
    Adv. Math. 217, No. 5, 1935--1961 (2008).
   
\bibitem{wachs}
    \textsc{M. L. Wachs}:
    {\it Poset topology: tools and applications}.
    Geometric combinatorics.  AMS, Princeton, 2007, pp.~497--615. %IAS/Park City Mathematics Series 13, 497-615 (2007).
    
\end{thebibliography}
\end{document}